\providecommand{\R}{\mathbb{R}}
\providecommand{\N}{\mathbb{N}}
\providecommand{\eps}{\varepsilon}
\def\longrightharpoonup{\relbar\joinrel\rightharpoonup}
\def\cv{\stackrel{w}{\longrightharpoonup}}
\def\cvwstar{\stackrel{w*}{\longrightharpoonup}}
\providecommand{\calC}{\mathcal{C}}
\newcommand{\calF}{\mathcal{F}}
\newcommand{\calS}{\mathcal{S}}
\newcommand{\calI}{\mathcal{I}}
\newcommand{\bbR}{\mathbb{R}}
\newcommand{\pS}{\partial\mathcal{S}}
\renewcommand{\leq}{\leqslant}
\renewcommand{\geq}{\geqslant}
\DeclarePairedDelimiter\abs{\lvert}{\rvert}%
\renewcommand{\div}{\operatorname{div}}
\newcommand{\curl}{\operatorname{curl}}
\newtheorem{Theorem}{Theorem}
\newtheorem{Definition}{Definition}
\newtheorem{Corollary}{Corollary}
\newtheorem{Proposition}{Proposition}
\newtheorem{Lemma}{Lemma}
\newtheorem{Remark}{Remark}
\begin{document}

\date{\today}
\title{Existence of weak solutions to the two-dimensional incompressible Euler equations in the presence of sources and sinks}
\author{Marco Bravin\footnote{BCAM - Basque Center for Applied Mathematics, Mazarredo 14, E48009 Bilbao, Basque Country - Spain. }, 
Franck Sueur \footnote{ Institut de Math\'ematiques de Bordeaux, UMR CNRS 5251,
Universit\'e de Bordeaux, 351 cours
de la Lib\'eration, F33405 Talence Cedex, France   $\&$ Institut  Universitaire de France} }

\maketitle

\begin{abstract}
A classical model for sources and sinks in a  two-dimensional perfect incompressible fluid occupying a bounded domain dates back to Yudovich's paper \cite{Yudo} in 1966.
 In this model, on the one hand, the normal component of the fluid velocity is prescribed on the boundary and is nonzero on an open subset of the boundary, corresponding either to sources (where the flow is incoming) or to sinks (where the flow is outgoing).
On the other hand the vorticity of the fluid which is entering into the domain from the sources is prescribed. 

In this paper we investigate the existence of weak solutions to this system by relying on \textit{a priori} bounds of the vorticity, which satisfies a transport equation associated with the fluid velocity vector field. 
Our results cover the case where the vorticity has a $L^p$  integrability in space, with $p $ in $[1,+\infty]$, and prove the existence of solutions obtained by compactness methods from viscous approximations. 
More precisely we prove the existence of solutions which satisfy the vorticity equation in the distributional sense in the case where $p >\frac43$,  in the renormalized sense  in the case where $p >1$, and in a  symmetrized sense  in the case where $p =1$. 
\end{abstract}

\newpage
\tableofcontents
\newpage

\section{Introduction}

This paper focuses on the mathematical analysis of  a 2D perfect incompressible fluid occupying a bounded domain in the presence of sources and sinks. 
A classical model  dates back to Yudovich's paper \cite{Yudo} in 1966 where, on the one hand, the normal component of the fluid velocity is prescribed on the whole boundary of the fluid domain, and on the other hand the vorticity  is prescribed on the part of the boundary where the fluid is entering into the domain. 
The connected components of this part are called the sources whereas the connected components of the part of the boundary where the flow is exiting of the domain  are called the sinks.

More precisely let $ \Omega $ an open bounded connected simply-connected non-empty subset of $ \mathbb{R}^2 $ with smooth boundary. 
Let $N \geq 2$, $ 1 \leq n \leq N-1 $,  
 $$ \calI^{+} = \{1,\dots, n\}   , \quad \calI^{-} = \{n+1, \dots, N \}   \quad   \text{ and } \quad   \calI = \calI^{+}\cup \calI^{-}  .$$
For $i \in \calI$,  let $ \calS^{i} $ an open connected simply-connected non-empty subset of $ \mathbb{R}^2 $ compactly contained in $ \Omega $ with smooth boundary. We assume that the closures of the sets $ \calS^{i} $  are pairwise disjoint. 
The domain occupied by the fluid is 
$$ \calF = \Omega \setminus \overline{\bigcup_{i \in \calI} \calS^{i}} ,$$  and we split the boundary of the fluid domain into two parts: 
$$ \partial \calF^{+} = \bigcup_{i \in \calI^{+} } \pS^{i} \quad   \text{ and } \quad \partial \calF^{-} = \bigcup_{i \in \calI^{-} } \pS^{i} ,$$
respectively called outlet and inlet. 
An example of fluid domain is shown in Figure \ref{fig}.
Finally note that it is possible to deal with the case where the fluid is allowed to enter or exit through the exterior domain $ \partial \Omega $ but in this work we assume, for sake of simplicity,  that the boundary $\partial \Omega$ of $ \Omega $ is impermeable. 

\begin{figure}
\centering 
\includegraphics[scale= 0.65]{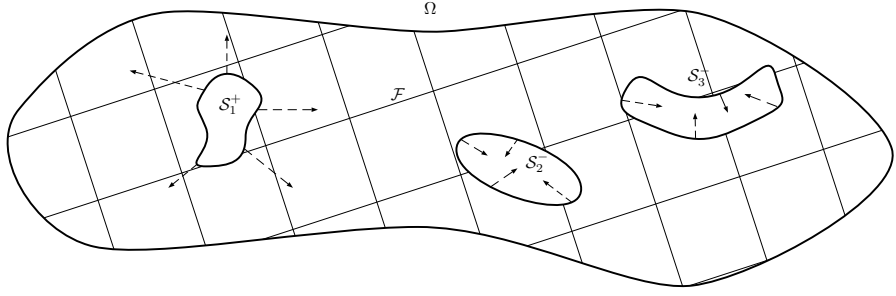}
\caption{Example of a fluid domain with one source and two sinks.}
\label{fig}
\end{figure}

The equations in the unknown $ (v,p) $ that model the dynamics read as 
\begin{subequations} \label{fir:sy:ss}
\begin{align}
\label{fir:sy:ss-eq} \partial_t v + v \cdot \nabla v +\nabla p = \, & 0 & & \text{ in } \mathbb{R}^{+} \times \mathcal{F},   \\ 
\label{fir:sy:ss-div}  \text{div} \, v = \, & 0 & & \text{ in } \mathbb{R}^{+} \times \mathcal{F},  \\
\label{fir:sy:ss-Y1} v\cdot n = \, & g & & \text{ on } \mathbb{R}^{+} \times \partial \mathcal{F},  \\
\label{fir:sy:ss-Y2} \curl v = \, & \omega^+ & & \text{ on } \bbR^{+} \times \partial \calF^{+},  \\
v(0,.) = \, & v^{in} & & \text{ in } \mathcal{F},   
\end{align}    
\end{subequations}
where $ v:\bbR^{+} \times \calF \to \mathbb{R}^2 $ is the fluid velocity field and $ p:\bbR^{+} \times \calF \to \bbR $ is the fluid pressure.
 In \eqref{fir:sy:ss-Y1},  $ n $ is the unit normal vector field exiting from the domain $ \mathcal{F} $. 
 The data $ g $ for the normal component of the velocity on the boundary is assumed to satisfy 
 $ g  < 0  $ on $ \bbR^{+} \times \pS^{i} $ with $ i \in \calI^{+} $, $ g  > 0 $ on $ \bbR^{+} \times \pS^{i} $ with $i \in \calI^{-}$, and $ g = 0 $ on $\bbR^{+}\times \partial \Omega $. 
 Because of these sign conditions we say that $ \calS^{i} $ is a source if $ i \in \calI^{+} $ and a sink if $i \in \calI^{-}$. The condition that  $ g = 0 $ on $\bbR^{+}\times \partial \Omega $ encodes that the external boundary $v$ is impermeable. 
 We also assume that, at any time $t$, the function $g(t,\cdot)$  has zero average on $\partial \calF$, which is the compatibility condition associated with the incompressibility, obtained from the integration  of the divergence free condition \eqref{fir:sy:ss-div} over  the whole fluid domain ${\calF}$.
 In \eqref{fir:sy:ss-Y2}, 
  $$ \omega^+: \bbR^+ \times \partial \calF^{+} \to \bbR ,$$
   is the entering vorticity. 
  Finally the initial data  $ v^{in} $ for the fluid velocity is assumed to satisfy $ \div v^{in} = 0 $ in $ \mathcal{F}$.

\begin{Definition}[Source-sink compatible] 
We say that a vector field $v: \bbR^+ \times \calF \to \bbR^2 $, respectively a function $ g: \bbR^+ \times \partial \calF \to \bbR $, is source-sink compatible (SSC)  if 
\begin{equation}
\label{SSC:v}
\div(v) = 0 \text{ in } \calF, \quad \int_{\partial \calF} v\cdot  n= 0, \quad v\cdot n = 0 \text{ on } \partial \Omega, \quad v\cdot n < 0 \text{ on } \partial \calF_{+}, \quad v \cdot n > 0 \text{ on } \partial \calF_{-}, \tag{SSC\textsubscript{1}}
\end{equation}
and respectively if
\begin{equation}
\label{SSC:g}
\int_{\partial \calF} g =  0, \quad g = 0 \text{ on } \partial \Omega, \quad g < 0 \text{ on } \partial \calF_{+}, \quad g > 0 \text{ on } \partial \calF_{-}. \tag{SSC\textsubscript{2}}
\end{equation}

Moreover, for a \ref{SSC:v} vector field $ v$,  we usually denote the normal trace on the boundary by $ g =  v \cdot n  $ on $ \partial \calF $.

\end{Definition}

\begin{Remark}
The choice of completing the system \eqref{fir:sy:ss} by prescribing the entering vorticity is not the only one. Another possibility is to prescribe a condition on the pressure, see  \cite{Mam}.
\end{Remark}

\begin{Remark}
Systems such as 
\eqref{fir:sy:ss} were extensively used in control theory,  see for instance  \cite[Section 6.2]{Coron},  \cite{G}, \cite{GR},   \cite{GKS}. 
Let us highlight that the sign conventions may differ here from some of these papers. 
\end{Remark}

\subsection{Transport of  vorticity}
\label{sec-compo}

The velocity formulation \eqref{fir:sy:ss} is not well-adapted to a weak formulation and to energy estimates because of the pressure term,  in particular of its trace on the permeable part of the boundary. On the other hand Yudovich's boundary conditions  \eqref{fir:sy:ss-Y1}-\eqref{fir:sy:ss-Y2}  are very well adapted to a formulation in terms of the vorticity. In this paragraph we formally derive such a formulation by performing some computations from the velocity formulation above, assuming that we handle a smooth solution. First we apply the  $ \curl $  operator to the first equation of \eqref{fir:sy:ss} to obtain the following transport equation for the vorticity $ \omega $:
\begin{subequations} \label{tr-vort}
\begin{align}
\partial_t \omega + v \cdot \nabla \omega = & \, 0 && \text{ in } \bbR^{+} \times \calF, \label{corona}  \\
\omega = & \, \omega^+ && \text{ on } \bbR^{+} \times \partial \calF^{+}, \label{equ:vor:ss} \\
\omega(0,.) = & \, \omega^{in} && \text{ in } \calF .
\end{align}  
\end{subequations}

Formally one may solve this transport equation by the methods of characteristics. Because some fluid is entering through a part of the boundary one needs to distinguish two kinds of characteristics.
First for any $x\in  \calF$, 
we consider the position $X^i (t,x)$  at time $t>0$ of the fluid particle which is at the position $x$ at time $t=0$ and which moves following the velocity field $v$, that is we consider the ODE
\begin{equation}
	\begin{cases}
		\partial_t X^i (t,x)=v(t,X^i (t,x)), & t>0 \\[2mm]
		X^i (0,x)=x.
	\end{cases}
	\label{eq:char1}
\end{equation}
On the other hand, 
for any $z \in \partial  \calF_+$ and $s\geq0$, we consider 
the position $X^b (t,z,s)$   at time $t>0$ of the fluid particle which is at the position $z$ at time $t=s$
 and which moves following the velocity field $v$,  that is we consider the ODE
 \begin{equation}
	\begin{cases}
		\dfrac{d}{dt}X^b (t,z,s)=v(t,X^b (t,z,s)), & t>s \\[2mm]
		X^b (s,z,s)=z.
	\end{cases}
	\label{eq:char2}
\end{equation}
Then the vorticity at time $t$ can be formally recovered from  the initial vorticity and from the entering vorticity as follows. 
On   $[0,T]\times \calF$, 
we define: 
\begin{itemize}
\item $  \omega^i  $ by setting, for $(t,x) $ in $[0,T]\times  \calF$, 
 $ \omega^i (t,x)  =  \omega_0(y)$ when there is $y \in  \calF $ such that $x = X^i (t,y)$ and $ \omega^i (t,x)  =0$ otherwise, 
\item  $ \omega^b $  by setting, for $(t,x) $ in $[0,T]\times  \calF$, 
 $ \omega^b (t,x)  =  \omega_+ (t,z)$ when there is 
$(s,z)$ in  $[0,T]\times \partial { \calF}_+$ such that $x= X^b (t,z,s)$ 
and $ \omega^b (t,x)  =0$ otherwise. 
\end{itemize}

Then  for any  $(t,x) $ in $[0,T]\times  \calF$, the vorticity can be obtained as 
$$ \omega(t,x)= \omega^i (t,x) +  \omega^b (t,x) .$$

  Of course, this discussion is very formal since we did not not care about the Cauchy problem for \eqref{eq:char1} and \eqref{eq:char2}; indeed even in a smooth setting the flow has to be stopped or extended when the characteristics cross the outlet $\partial  \calF_+$.
 However this formal approach gives some insights on the  \textit{a priori}   bounds which may be true for the vorticity. 
 In particular it sounds reasonable to expect the following $L^p$  \textit{a priori}   bounds on the vorticity:  
 for $ 1 \leq q  < \infty$, 
\begin{equation} \label{apq}
\| \omega(t)\|^{q}_{L^{q}(\calF)} + \int_{0}^{t}\int_{\partial \calF^{-}} g |\omega^{-}|^{q} ds dt  \leq \| \omega^{in} \|_{L^{q}(\calF)}^{q}
+ \int_{0}^{t}\int_{\partial \calF^{+}} (-g) |\omega^{+}|^{q}ds dt ,  
\end{equation}
and 
\begin{gather}  \label{apinf}
  \max\left\{\| \omega\|_{L^{\infty}(0,t; L^{\infty}(\partial \calF^{+}))},   \| \omega^{-} \|_{L^{\infty}((0,t)\times \partial \calF^{-})}\right\}
\leq
  \max\left\{\| \omega^{in}\|_{L^{\infty}(\calF)}, \| \omega^i\|_{L^{\infty}(0,t; L^{\infty}(\partial \calF^{+}))} \right\} .
  \end{gather} 

In view of these   \textit{a priori}  bounds, we introduce the following definition regarding the integrability of 
the data of the problem concerning the vorticity, that is the initial vorticity $ \omega^{in}$ and the vorticity $\omega^{+}$ entering though the inlet $\partial \calF^{+}$.
\begin{Definition}[Couple of input vorticities]
\label{def-input}
For $p$ in $[1,+\infty]$, 
 we say that   $(\omega^{in} , \omega^{+}  )$ is a couple of input vorticities  in $L^p$ or shortly (CIV) in $L^p$ if 
\begin{equation}
\label{CIV}
\omega^{in} \in L^p(\calF, dx) \quad    \text{ and } \quad  \omega^{+} \in L^{p}_{loc}\left(\bbR^{+};L^{p}(\partial \calF^{+}, g ds)\right). \tag{CIV}
\end{equation}

\end{Definition}
Similarly, we introduce the following definition regarding the  integrability of the vorticity  $ \omega$  in the domain $\calF$ and 
the one of the 
the exiting vorticity that is the trace $\omega^{-} $ of $ \omega$ on the outlet $\calF_-$. 
\begin{Definition}[Continuous and weakly continuous couple]
\label{def-output}
For $p$ in $[1,+\infty ]$, 
 we say that   $(\omega , \omega^{-}  )$ is a continuous couple with values in $ L^p$
if 
\begin{equation}
\label{CC}
\begin{gathered} 
 \omega \in   C\left(\bbR^{+};  L^{p}(\calF)\right) \quad    \text{ and } \quad  \omega^{-} \in L^{p}_{loc}\left(\bbR^{+};L^{p}(\partial \calF^{-}, g ds)\right), 
\quad  \text{ if }  p < + \infty, 
\\ \omega \in   C\left(\bbR^{+};  L^{\infty}(\calF) -w*\right) \quad    \text{ and } \quad  \omega^{-} \in L^{\infty}_{loc}\left(\bbR^{+};L^{\infty}(\partial \calF^{-}, g ds)\right) , 
\quad  \text{ if }  p = + \infty . \tag{CC}
   \end{gathered} 
\end{equation}
Moreover for $ p \in[1, \infty)$  we say that   $(\omega , \omega^{-}  )$ is a weakly continuous couple with values in $ L^p$
if 
\begin{equation}
\label{WCC}
 \omega \in   C\left(\bbR^{+};  L^{p}(\calF)-w \right) \quad    \text{ and } \quad  \omega^{-} \in L^{p}_{loc}\left(\bbR^{+};L^{p}(\partial \calF^{-}, g ds)\right). \tag{WCC}
\end{equation}

\end{Definition}
Above the notation $L^{\infty}(\calF) -w*$ refers to the weak star topology of  $L^{\infty}(\calF)$ viewed as the topological dual space of $L^{1}(\calF)$.

%
\subsection{Velocity field as solution of an elliptic problem}

The velocity field can be  recovered  by solving the following div-curl system:
\begin{subequations} \label{elll}
\begin{align}
\div v = & \, 0 && \text{ in } \calF, \label{elll1} \\
\curl v = & \, \omega && \text{ in } \calF,   \label{elll2}\\ 
v\cdot n = & \, g && \text{ on } \partial \calF, \label{equ:bio:sav:ss} \\
 \label{defC} \int_{\pS^{i}} v(t,.) \cdot \tau = & \, \calC_{i}(t) && \text{ for } i \in \calI ,   
\end{align}
\end{subequations}
where we denote by $ \tau $ the counterclockwise tangent vector to the boundary. 
The quantities in the last equation are the circulations of the velocity vector field $v$ around the connected components $\pS^{i}$. 
 The reason why these circulations are important in the discussion is linked to the multiply-connectedness of the fluid domain $\calF$, this will be detailed below, in Section \ref{sec-dec}, after the analysis of the dynamics of these circulations.

\subsection{Dynamics of the circulations around the sources and sinks}

For each $ i \in\calI $, the circulation $\calC_{i}(t) $
evolves in time according to the following Cauchy problem: 
\begin{equation}
\label{KLSS}
\calC_{i}'(t) = - \int_{\pS^{i}} \omega(t,.) g(t,.) ds, \quad  \calC_{i}(0) = \int_{\pS^{i}} v^{in} \cdot n ds .
\end{equation}
This follows from \eqref{fir:sy:ss-eq}  recast as 
\begin{equation}  \label{defE}
\partial_t v +  \omega v^\perp +   \nabla ( p + \frac12  \vert v  \vert^2) =  0 , 
\end{equation}
Indeed, by  \eqref{defC} and  \eqref{defE}, 
\begin{equation}
\label{KJJ}
\calC_{i}' = \int_{\pS^{i}} (\partial_t  v) \cdot \tau 
= - \int_{\pS^{i}}   \omega    \tau   \cdot v^\perp
- \int_{\pS^{i}} \tau  \cdot   \nabla ( p + \frac12  \vert v  \vert^2) .
\end{equation}
Since $\pS^{i}$ is a closed curve, 
the second integral is zero whereas the first one can be converted in the right hand side of the first equation in  \eqref{KLSS} by observing that 
$ \tau   \cdot v^\perp = - n \cdot v = -g$.
On the other hand the second equality in \eqref{KLSS} is another compatibility condition for the initial data $v^{in} $ with the boundary conditions. The identities in  \eqref{KLSS}  are known at least since the paper of Yudovich mentioned above, see \cite[Lemma 1.2]{Yudo}. 
By integration in time of \eqref{KLSS}, we arrive at the following formula for the circulations at time $t$: 
\begin{equation}
\label{bou:equ:ss}
\calC_{i}(t) = \calC_{i}^{in} - \int_{0}^{t} \int_{\pS^{i}} \omega^+ g \ \text{ for } i \in \calI^+, \   \text{ and } \
\calC_{i}(t) = \calC_{i}^{in} - \int_{0}^{t} \int_{\pS^{i}} \omega^- g \  \text{ for } i \in \calI^-. 
\end{equation}
Above we have separated the circulations around the sources and the ones around the sinks because the first ones are deduced from the boundary data $\omega^+ $ (the entering vorticity)  and $g$ (the entering normal velocity); they are therefore themselves to be considered as prescribed data for this problem. 
On the other hand 
 the second ones are unknowns of the problem since they involve the exiting vorticity that is the trace $\omega^{-} $ of $ \omega$ on $\calF_-$. 
 
 Moreover a computation  similar  to \eqref{KJJ}  for the circulation 
\begin{equation} \label{bouse1}
\calC_{\partial \Omega} = \int_{\partial \Omega} v(t,.) \cdot \tau , 
\end{equation} 
  around  the external boundary $ \partial \Omega $ proves that $\calC_{\partial \Omega} $ is constant in time, because of the impermeability condition on $ \partial \Omega $. 
This is the standard case of  Kelvin's theorem. 
Finally, by integration of the equation   \eqref{elll2} over the whole fluid domain ${\calF}$, we obtain the following identity, which holds at any time $t$, 
\begin{equation}  \label{bouse2}
\int_{\calF} \omega(t,.) = \calC_{\partial \Omega} + \sum_{i \in \calI}\calC_{i}(t).
\end{equation}
Therefore the circulation $\calC_{\partial \Omega}$ does not contain any new information and will not intervene in the sequel.

\subsection{Decomposition of the velocity}
\label{sec-dec}

Let us first consider the following potential lift of the boundary data $g$ for the normal velocity: with any smooth enough $g$ we associate 
 $ v_g = \nabla \varphi $, where $\varphi_g $ is the unique solution of 
\begin{equation*}\begin{cases}  - \Delta \varphi_g = 0 \quad & \text{ in } \calF, \\ \nabla \varphi_g\cdot n = g \quad & \text{ in } \partial \calF .
 \end{cases} \end{equation*}
The regularity of the vector field $ v_{g} $ depends on the boundary data $ g $. 
For  more in this direction we refer for example to  \cite{GAGLI} and  \cite{KMPT}.

Let us also recall that for any smooth function $\omega$ there is a unique vector field 
 $ K_{H}[\omega]  $ satisfying 
\begin{subequations} \label{elll-KH}
\begin{align}
\div K_{H}[\omega]   = & \, 0 && \text{ in } \calF, \\
\curl K_{H}[\omega]   = & \, \omega && \text{ in } \calF,   \label{elllKH2}\\ 
K_{H}[\omega]  \cdot n = & \, 0 && \text{ on } \partial \calF, \label{equ:bio:sav:ss-KH} \\
 \label{defCK} \int_{\pS^{i}} K_{H}[\omega]   \cdot \tau = & \, 0 && \text{ for } i \in \calI .   
\end{align}
\end{subequations}
The mapping $\omega \mapsto K_{H}[\omega]  $  is called the hydrodynamical Biot-Savart law. It can be written as
an integral operator of the form 
\begin{equation}
\label{KHL}
K_{H}[\omega] (x) =  \int_{\calF} K(x,y) \omega (y) \, dy .
 \end{equation}
Moreover it follows from the Hodge-De Rham theory that the
 vector space  of the vector fields $v$ satisfying 
$ \div v = 0 $ and $\curl v =0$ in $ \calF$, and 
 $v\cdot n =0$ on $ \partial \calF$ is  of dimension $N$ (i.e. the number of holes in the fluid domain), and a basis of this  vector space is given by the 
 unique vector fields $ (X_i )_{i \in \calI}  $ satisfying 
\begin{subequations} \label{elllX}
\begin{align}
\div X_i = & \, 0 && \text{ in } \calF, \\
\curl X_i = & \, 0&& \text{ in } \calF,   \label{elll2X}\\ 
X_i\cdot n = & \, 0 && \text{ on } \partial \calF, \label{equ:bio:sav:ssX} \\
 \label{defCX} \int_{\pS^{j}} X_i  \cdot \tau = & \,   \delta_{ij}&& \text{ for } j \in \calI ,   
\end{align}
\end{subequations}
where the notation $ \delta_{ij} $ stands for the Kronecker symbols.
In view of the   \textit{a priori}   bounds \eqref{apq} and \eqref{apinf}
some important estimates regarding the operator $\omega \mapsto  K_{H}[\omega] $ 
 are the following: for $ 1 < q  < \infty$, there exists a constant $C>0$ such that 
\begin{equation} \label{CZ}
\|  K_{H}[\omega]  \|_{W^{1,q}(\calF)}
\leq C 
\| \omega  \|_{L^q(\calF)} ,  
\end{equation}
and  a constant $C>0$ such that 
\begin{equation} \label{CZ-infty}
\|  K_{H}[\omega]  \|_{LL(\calF)}
\leq C 
\| \omega  \|_{L^{\infty}(\calF)} ,  
\end{equation}
 Moreover for any smooth functions $g$ and $\omega$, there is a unique solution $v$ to \eqref{elll} and $v$ can be decomposed into 
\begin{equation}
\label{u:dec:ss}
 v = v_g +\sum_{i\in \mathcal{I}} \calC_i(t)   X_i + K_{H}[\omega] .
 \end{equation}
We refer here to \cite{Kato,Lin,flucher-gustafsson,MP} for more.

\subsection{Formal vorticity formulation}

Gathering  \eqref{tr-vort}, \eqref{bou:equ:ss} and \eqref{u:dec:ss}, we deduce 
 that the system  \eqref{fir:sy:ss}
  is formally equivalent to following vorticity-based reformulation: 
\begin{subequations} \label{vort-ss2}
\begin{align}
\partial_t \omega + v \cdot \nabla \omega = & \, 0 && \text{ in } \bbR^{+} \times \calF, \label{vort-ss2-a}  \\
\omega = & \, \omega^+ && \text{ on } \bbR^{+} \times \partial \calF^{+}, \label{vort-ss2-b}  \\
\omega(0,.) = & \, \omega^{in} && \text{ in } \calF ,\label{vort-ss2-c}  \\
v =& \, v_g +\sum_{i\in \mathcal{I}} \calC_i(t)   X_i + K_{H}[\omega] &&  \text{ in } \bbR^{+} \times \calF, \label{vort-ss2-d}  \\
 \calC_i(t)   = &\,  \calC_{i}^{in} - \int_{0}^{t} \int_{\pS^{i}} \omega^\pm g && \text{ for } i \in \calI^\pm \label{vort-ss2-e}.
\end{align}    
\end{subequations}
A few comments are in order. 
\begin{itemize}
\item 
Let us insist of the fact that there are two unknowns to the system  \eqref{vort-ss2}
 which are  $\omega $ and $\omega^{-}  $. In particular this is an additional feature of the present setting where the fluid exits through some holes in the domain that the exiting vorticity 
  $\omega^{-}  $ is necessary to  determine the velocity vector field $v$, see \eqref{vort-ss2-e}. 
  Let us mention that in the case, which is not considered in this paper,  where the fluid  exits from the domain only through a part of the external boundary $\partial \Omega$ then the determination of  the velocity $v$ inside the fluid domain $ \calF$ 
   is decoupled from  the exiting vorticity $\omega^{-}$, and so is the the determination of  the vorticity $\omega$ inside the fluid domain $ \calF$. 
 \item 
 Above the discussion has been quite formal; we did not care about the regularity or kind of solution for which the equivalence of 
 the system  \eqref{fir:sy:ss} and of the system  \eqref{vort-ss2} holds true. 
 Indeed the formulation \eqref{vort-ss2} seems much more appropriate to formulate rigorous mathematical results on the  problem at stake. 
 In this direction, it is worth to highlight that the velocity vector field $v$ being divergence free in $\calF$, the first equation of  \eqref{vort-ss2} can be rewritten in the conservative form: 
 \begin{equation}
  \label{conservative}
\partial_t \omega +  \div ( \omega v) =  0 \quad   \text{ in } \bbR^{+} \times \calF .
\end{equation}
 \end{itemize}

\section{A first glance on the main results}

To avoid the reader to wait too long for an exposition of the main results of this paper, we first state the following informal statement which offers in a single glance some assertions regarding  the existence of solutions of  some different appropriate weak formulations of the system \eqref{vort-ss2} with input vorticities in $L^p$. 
\begin{Theorem}
\label{th-rough}
Let $p$ in $[1,+\infty]$ and  $(\omega^{in} , \omega^{+}  )$ a couple of input vorticities (\ref{CIV}) in $ L^p$.
Then there is $(\omega , \omega^{-} )$ a continuous couple (\ref{CC}) with values in $ L^p $ solution of the system \eqref{vort-ss2}. This solution have to be understood in different ways depending on the range of $p$ according to the following cases:
\begin{enumerate}[(i)]
\item for $p $ in $( 4/3,+\infty ]$, there exists a solution in a distributional sense,
\item  for $p$ in $(1,+\infty ]$, there exists a solution in a renormalized sense,
\item \label{point3}  for $p =1$,  there exists a solution of a symmetrized formulation. 
\end{enumerate}
In each of these cases, these solutions can be obtained as vanishing viscosity limits. 
\end{Theorem}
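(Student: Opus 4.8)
The plan is to realise the solution as a vanishing-viscosity limit, organised in three stages — \emph{regularised problems}, \emph{uniform bounds}, and \emph{passage to the limit} — the last stage being where the three regimes for $p$ genuinely diverge. For each $\nu>0$ I would mollify the data into smooth $(\omega^{in}_\nu,\omega^+_\nu)$ and solve the convection--diffusion regularisation of the conservative form \eqref{conservative},
\[ \partial_t \omega^\nu + \div(\omega^\nu v^\nu) - \nu\Delta\omega^\nu = 0 \quad \text{in } \bbR^+\times\calF, \]
with $\omega^\nu=\omega^+_\nu$ on the inlet $\partial\calF^+$, a no-flux condition $\partial_n\omega^\nu=0$ on the outlet $\partial\calF^-$, the velocity $v^\nu$ reconstructed by the Biot--Savart decomposition \eqref{vort-ss2-d} and the circulations $\calC^\nu_i$ driven by \eqref{vort-ss2-e}. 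For fixed $\nu$ this is a quasilinear parabolic equation coupled to a linear elliptic problem with smooth coefficients; a global smooth solution follows from a short-time fixed-point argument bootstrapped by the bounds below.

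\textbf{Uniform bounds.} The heart of the argument is that \eqref{apq}--\eqref{apinf} hold uniformly in $\nu$. Testing the equation with $\beta'(\omega^\nu)$ for a convex renormaliser $\beta(s)=\abs{s}^q$, the advection term $\int_\calF \div(\omega^\nu v^\nu)\beta'(\omega^\nu)$ reduces, using $\div v^\nu=0$ and $v^\nu\cdot n=g$, to the boundary flux balance $\int_{\partial\calF^-} g\abs{\omega^-}^q-\int_{\partial\calF^+}(-g)\abs{\omega^+}^q$ of \eqref{apq}, while the diffusion contributes the nonnegative bulk dissipation $\nu\int_\calF\beta''(\omega^\nu)\abs{\nabla\omega^\nu}^2\ge 0$, which is discarded, up to an inlet boundary remainder controlled by the smoothed data and vanishing as $\nu\to 0$. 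This gives a bound on $\omega^\nu$ in $L^\infty_{loc}(\bbR^+;L^p(\calF))$ and on its outlet trace $\omega^{\nu,-}$ in $L^p_{loc}(\bbR^+;L^p(\partial\calF^-,g\,ds))$, uniformly in $\nu$; the case $p=\infty$ follows from a maximum-principle argument yielding \eqref{apinf}.

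\textbf{Compactness.} By the Calder\'on--Zygmund bound \eqref{CZ} (resp. \eqref{CZ-infty} for $p=\infty$), $K_{H}[\omega^\nu]$, hence $v^\nu$ through \eqref{vort-ss2-d}, is bounded in $L^\infty_{loc}(\bbR^+;W^{1,p}(\calF))$, and the $\calC^\nu_i$ are bounded and equicontinuous in time by \eqref{vort-ss2-e}. Controlling $\partial_t\omega^\nu$ in a negative-order space through the equation and invoking Aubin--Lions, I would upgrade the weak-$*$ convergence $\omega^\nu\cvwstar\omega$ (along a subsequence) to strong convergence $v^\nu\to v$ in $L^p_{loc}(\bbR^+\times\calF)$ and convergence of the circulations, so that the limit satisfies \eqref{vort-ss2-d}--\eqref{vort-ss2-e} and $\omega^{\nu,-}\rightharpoonup\omega^-$; the time-continuity required in \eqref{CC} then follows from the limiting vorticity equation together with an energy-equality argument.

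\textbf{Passage to the limit and the three regimes.} The sole remaining, and decisive, difficulty is to pass to the limit in the nonlinear flux $\omega^\nu v^\nu$, which is exactly what fixes the thresholds. For $p>4/3$, Sobolev embedding gives $v^\nu$ bounded in $L^{2p/(2-p)}$, so by H\"older $\omega^\nu v^\nu$ is bounded in some $L^r$ with $r>1$; the product of a weakly convergent and a strongly convergent sequence then passes to the limit, and $(\omega,\omega^-)$ solves \eqref{vort-ss2} distributionally. For $1<p\le 4/3$ the product is only borderline $L^1$ and the distributional formulation breaks down, so I would instead pass to the limit in the renormalised formulation: the limit velocity lies in $W^{1,p}$ with $\div v=0$ and $p>1$, so the DiPerna--Lions stability theory applies, and strong convergence of $v^\nu$ combined with stability of renormalised solutions produces a renormalised solution, the boundary flux being absorbed through the renormalised chain rule consistently with \eqref{apq}. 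The hardest case is $p=1$: the velocity then fails to sit in a Sobolev space with enough integrability for DiPerna--Lions, and the flux retains no compactness beyond weak-$L^1$, so I would pass to the limit in the symmetrised formulation, which tests only against the symmetric part of the transport operator and is stable under weak-$L^1$ convergence. The crux there is to extract equi-integrability of $\omega^\nu$ and of its outlet trace directly from the flux balance \eqref{apq} — via a de la Vall\'ee Poussin / Dunford--Pettis argument on a well-chosen superlinear renormaliser — since no higher integrability is available to rule out concentration.
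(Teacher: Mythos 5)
There is a genuine gap, and it sits at the foundation of your scheme: the choice of boundary conditions for the viscous approximation. You impose the Dirichlet condition $\omega^\nu=\omega^+_\nu$ on the inlet $\partial\calF^+$ and $\partial_n\omega^\nu=0$ on the outlet. With that choice, when you test the equation with $\beta'(\omega^\nu)$, the diffusion term produces the inlet boundary contribution $-\nu\int_{\partial\calF^+}\partial_n\omega^\nu\,\beta'(\omega^+_\nu)$, which involves the unknown normal derivative of the solution at the inlet. This is a boundary-layer quantity with no sign and no uniform-in-$\nu$ bound, so your assertion that the estimate closes ``up to an inlet boundary remainder controlled by the smoothed data and vanishing as $\nu\to0$'' is unjustified: the uniform bound \eqref{apq} does not close as stated. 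The paper circumvents exactly this by using the penalised Robin condition \eqref{equ:app:ssin2}, namely $\nu\partial_n\omega_\nu=(\omega_\nu-\omega^+_\nu)g_\nu\mathds{1}_{\partial F_+}$, for which the inlet term becomes $\int_{\partial\calF^+}(\omega_\nu-\omega^+_\nu)g_\nu\,G'(\omega_\nu)$ and is dominated by convexity of the renormaliser, yielding \eqref{est:con:Gun} and hence \eqref{apq}--\eqref{apinf} uniformly in $\nu$; an additional benefit, visible in \eqref{weak:for:SS:MB}, is that the inlet integral in the weak formulation then carries the prescribed datum $\omega^+_\nu$ rather than the trace of the unknown, which is what makes the boundary terms identifiable in the limit.

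A second gap concerns the renormalised case. For $1<p\le 4/3$ you invoke ``stability of renormalised solutions'' under the vanishing-viscosity limit, but with only weak convergence $\omega^\nu\rightharpoonup\omega$ the compositions $\beta(\omega^\nu)$ converge weakly to some limit that you cannot identify with $\beta(\omega)$; the strong convergence of $\omega^\nu$ that would repair this is precisely not available at that stage — the paper proves it only afterwards (Theorem \ref{the:str}), \emph{using} the renormalised property already established, so your route as sketched is circular. The missing idea is the paper's duality argument in the spirit of \cite{CS}: first produce the renormalised solution $(\bar\omega,\bar\omega^-)$ associated with the limit field $v$ via the inflow/outflow transport theory (Proposition \ref{exi-pq}, after \cite{Boyer}), then identify the weak limit with it by testing the viscous weak formulation against smooth solutions $\phi_\nu$ of the backward viscous transport equation \eqref{vis:back:trans:equ} and passing to the limit to obtain the duality identity \eqref{dua1bis}, concluding by uniqueness. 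By contrast, your $p=1$ plan (symmetrised formulation plus a de la Vall\'ee Poussin/Dunford--Pettis equi-integrability argument) does match the paper's strategy, modulo two points your sketch leaves implicit: the restriction of the test functions to $\mathfrak{C}_{0}(\calF)$ and the boundedness of the kernel $H_\varphi$ (Lemma \ref{Hbd}), without which the symmetrised nonlinear term is not even well defined for $L^1$ vorticity.
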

These results on the existence of weak solutions complement the existence (and uniqueness) of smooth (typically with vorticity in $W^{2,p}$) solutions obtained in  in the work  \cite{Yudo} by Yudovich. 
We refer to Theorem \ref{def:wea:sol:ss}, Theorem \ref{exi:Lp:ss}, and Theorem \ref{exi:L1:ss}  for precise statements corresponding  respectively to the three cases 
above. 
Yet, let us already state  a few remarks. 
\begin{itemize}
\item In the part (i) of Theorem \ref{th-rough}  we refer to the existence of solutions of a weak formulation with test functions supported up to the boundary, making a slight abuse of language by using the terminology of distributional solutions. 
Indeed such an existence result has already been proved in  \cite{MU} by Mamontov and Uvarovskaya. Their proof makes  use of smooth solutions of  \eqref{vort-ss2},  given  in the work  \cite{Yudo} by Yudovich, corresponding to regularized input data. 
Here we will take a slightly different path by considering some parabolic approximations, that is some Navier-Stokes type equations with vanishing viscosity. This suggest that these solutions are perhaps more physical. However we consider an artificial boundary condition for these parabolic approximations. A further step in the direction of the construction of more physical solutions could be to consider more physical boundary conditions for such viscous approximations. 
In this direction, let us mention the papers \cite{CA} and \cite{CC}, where some Navier slip-with-friction boundary conditions are considered.

\item In the  part (ii) of Theorem \ref{th-rough}  we refer to the renormalization theory  as initiated by Di Perna and Lions in  \cite{DL} for the transport equations. 
Moreover the part (ii) of Theorem \ref{th-rough} can be seen as an extension of the result \cite{CS} by Crippa and Spirito in the case of the two-dimensional incompressible Euler equations without any source nor sink. It extends the  part (i) in the sense that for $p $ in $( 4/3,+\infty ]$, the two types of solutions: distributional and renormalized, are equivalent. 

\item In  the part (iii) of Theorem \ref{th-rough}, we refer to a  weak reformulation of the problem  inspired by the works 
  \cite{Del} and \cite{S} respectively by Delort and Schochet where the case  of a diffuse positive Radon measure as  initial vorticity is addressed, in the case without any source nor sink. 
A crucial point in these works is that an energy estimate allows to prevent from a vorticity concentration in Dirac masses at some positive times. Such an argument seems difficult to reproduce in our setting, what leads us to deal with the easier case where the vorticity is $L^1$, for which an argument of propagation of compactness can be used to prevent from any concentration.  However, even with this restriction, some difficulties appear with the boundary conditions. 
 Indeed  the  symmetrized formulation hinted here  suffers from a loss of information regarding the prescription of the entering vorticity: only the vorticity fluxes integrating on the whole boundary of each sink are encoded in the formulation, not their pointwise values, see Remark \ref{loss} below. Let us  therefore highlight that the part (iii) of Theorem \ref{th-rough} has to be seen only as a partial result in the $L^1$ case.

\item In the three cases, the regularity of the velocity field is enough to 
 give a sense to the circulations 
 \eqref{defC}
 and
 \eqref{bouse1}. 
 Moreover,  for the solutions which are constructed in the proof of 
Theorem \ref{th-rough}, the time evolution of the circulations around the sources and sinks  is given by 
  \eqref{vort-ss2-e} whereas the circulation around the external boundary is constant. 
 Finally the conservation law
  \eqref{bouse2} holds at any time.

\end{itemize}

\begin{Remark} \label{WP}
 The issue of the uniqueness  of  weak solutions to the system above is a delicate topic, which requires some different types of argument. 
 It is the object of current investigations. 
\end{Remark}

\section{Precise existence results of distributional and renormalized solutions}
\label{sec-precis}

In this  section we precisely state the  existence results of distributional and renormalized solutions: the parts (i)  and (ii) of Theorem \ref{th-rough}. 
 
 \subsection{Existence of distributional solutions}
 Let us start with Part (i), that is  the existence of distributional solutions with $ L^{p}$  vorticity when $ p > 4/3 $.
 The terminology ``distributional solutions'' refers to the transport equation for the vorticity: we will require it to be satisfied in the sense that 
   for any $ \varphi  $ in $C^\infty_c ([0,+\infty) \times \overline{\calF} ; \R)$, 
\begin{align}
\label{wf:equ:ss:noren}
\int_{\calF} \omega^{in}\varphi(0,.) dx + \int_{\bbR^{+}}  \int_{\calF} \omega (\partial_t \varphi +  v \cdot \nabla \varphi ) \,  dx  \, dt = \, &\int_{\bbR^+}\int_{\partial \calF^{+}} g  \omega^+ \varphi ds dt \\ & + \int_{\bbR^+}\int_{\partial \calF^{-}} g \omega^{-} \varphi ds dt. \nonumber
\end{align}
For any test function $ \varphi  $ in $C^\infty_c ([0,+\infty) \times \overline{\calF} ; \R)$, the 
 equation \eqref{wf:equ:ss:noren}
 is obtained  from the equation  \eqref{conservative} by multiplying it by $\varphi  $ and integrating by parts taking into account the boundary conditions: $v\cdot n =  g $ on $\partial \calF$ and 
$ \omega = \omega^+$ on $ \partial \calF^{+}$. As already mentioned the function $\omega^{-}$ is an unknown, in particular because the existence of a trace on $ \partial \calF^{-}$
of a function $\omega$ which is only in $ L^\infty \left(\bbR^{+};  L^p (\calF)\right)$ does not follow from standard trace theorems. On the other hand let us highlight that if 
 $ \omega$ is a smooth solution of the transport equation  \eqref{conservative} then an integration by parts provides the 
 equation \eqref{wf:equ:ss:noren} with the trace of $\omega$ on $ \partial \calF^{-}$ instead of  $\omega^{-}$. 
The result that is alluded to in Part (i) of Theorem \ref{th-rough} is the following. 
\begin{Theorem}
\label{def:wea:sol:ss}
Let $ p $ in $(4/3,\infty] $. Let $ \calC_{i}^{in} $ in $\bbR$ for each $i$ in $\calI $ the initial circulations around $ \pS^i$.
Let  $  g$ a source-sink compatible function (\ref{SSC:g}) in $L^{1}_{loc}(\bbR^{+}; W^{1-1/p,p}(\partial \calF)) $   in the case where  $p \in (1, \infty )$ and in  $L^1_{loc}(\bbR^{+};W^{1,\infty}(\partial \calF))$ in the case where $ p = \infty$.
Let $(\omega^{in} , \omega^{+}  )$ a couple of input vorticities (\ref{CIV}) in $ L^p$.
  Then there exists $(\omega , \omega^{-}  )$ a continuous couple (\ref{CC}) with values in $L^p $  
such that  for any  $ \varphi  $ in $C^\infty_c ([0,+\infty) \times \overline{\calF} ; \R)$, 
the identity \eqref{wf:equ:ss:noren} is satisfied 
with $v$ given by  \eqref{vort-ss2-d} and \eqref{vort-ss2-e}, satisfying \eqref{apq} with equal sign and with $q=p$ in the case where $p<+\infty$ and \eqref{apinf}  in the case where $p=+\infty$.
\end{Theorem}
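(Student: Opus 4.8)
The plan is to construct the solution as a vanishing-viscosity limit of Navier--Stokes--type approximations and to pass to the limit in the weak formulation \eqref{wf:equ:ss:noren}. For each $\nu>0$ I would first solve a regularized problem: add $-\nu\Delta$ to the momentum equation \eqref{fir:sy:ss-eq}, which in conservative vorticity form reads $\partial_t\omega^\nu + \div(\omega^\nu v^\nu) - \nu\Delta\omega^\nu = 0$, keep the Yudovich inlet condition $\omega^\nu=\omega^+$ on $\partial\calF^+$, and impose an artificial boundary condition on the remaining boundary (a homogeneous Neumann condition on $\omega^\nu$ being the natural choice, since it preserves the sign of the dissipation in the energy estimate). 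For fixed $\nu$ this is a semilinear parabolic problem coupled through the Biot--Savart law \eqref{u:dec:ss} and the circulation ODEs \eqref{KLSS}; its solvability for smooth enough data follows from a standard Galerkin or fixed-point scheme, producing $\omega^\nu$ regular enough to carry a classical trace $\omega^{-,\nu}$ on the outlet $\partial\calF^-$.

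Second, I would establish bounds uniform in $\nu$. Multiplying the equation by $|\omega^\nu|^{q-2}\omega^\nu$ and integrating, the transport term produces exactly the boundary flux $-\frac1q\int_{\partial\calF} g|\omega^\nu|^q$ (using $\div v^\nu=0$), while the diffusion produces a nonnegative dissipation plus a boundary term that vanishes under the artificial condition on the outlet; this yields \eqref{apq} with a ``$\le$'' sign and an extra nonnegative viscous dissipation, uniformly in $\nu$. Consequently $\omega^\nu$ is bounded in $L^\infty_t L^p(\calF)$, the trace $\omega^{-,\nu}$ is bounded in $L^p_{loc}(\bbR^+;L^p(\partial\calF^-,g\,ds))$, and, through the Calder\'on--Zygmund estimate \eqref{CZ} applied to $K_H[\omega^\nu]$ (or \eqref{CZ-infty} and \eqref{apinf} when $p=\infty$), $v^\nu$ is bounded in $L^\infty_t W^{1,p}(\calF)$.

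Third comes the passage to the limit. Up to subsequences, $\omega^\nu$ converges weakly-$*$ in $L^\infty_t L^p$ to some $\omega$, the traces $\omega^{-,\nu}\cv\omega^-$ weakly in the weighted space, and the circulations $\calC_i^\nu\to\calC_i$. For the nonlinear term I would exploit the gain of one derivative in $v^\nu$: combining the $W^{1,p}$ bound with a bound on $\partial_t\omega^\nu$ in a negative-order space coming from the equation, Aubin--Lions gives strong convergence $v^\nu\to v$ in $L^{p'}_{loc}$. This is precisely where the threshold $p>4/3$ enters, since in two dimensions $W^{1,p}$ embeds compactly into $L^{p'}$ exactly when $p'<2p/(2-p)$, i.e. when $p>4/3$. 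Thus $v^\nu\to v$ strongly in $L^{p'}_{loc}$ while $\omega^\nu\cv\omega$ weakly in $L^p$, so the product $\omega^\nu v^\nu\cv\omega v$ (weak--strong convergence), and one passes to the limit term by term; the interior viscous contribution $\nu\int\!\!\int \omega^\nu\,\Delta\varphi$ vanishes because $\nu\to0$ and $\omega^\nu$ is $L^p$-bounded, giving \eqref{wf:equ:ss:noren}.

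The \emph{main obstacle} lies at the boundary. First, the exiting vorticity $\omega^-$ must be \emph{defined} as the weak limit of $\omega^{-,\nu}$, since $\omega$ itself carries no classical trace, and one must verify that the outlet flux $\int\!\!\int_{\partial\calF^-} g\,\omega^{-,\nu}\varphi$ converges to $\int\!\!\int_{\partial\calF^-} g\,\omega^-\varphi$ against every test function. Second, and more delicate, are the viscous boundary contributions of the form $\nu\int_{\partial\calF}\partial_n\omega^\nu\,\varphi$ generated by integrating the diffusion by parts: these must be shown to vanish, which is exactly what forces both the specific artificial-condition choice and the regularity hypotheses on $g$ (namely $W^{1-1/p,p}(\partial\calF)$) and on $\omega^+$, and where most of the technical effort concentrates. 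Finally, to reach the asserted strong continuity $\omega\in C(\bbR^+;L^p)$ and the energy identity \eqref{apq} with \emph{equality} rather than the inequality delivered by weak lower semicontinuity, I would invoke the renormalization of the limiting transport equation --- legitimate here because $v\in L^\infty_t W^{1,p}$ is divergence free and $\omega\in L^\infty_t L^p$ --- to obtain the reverse inequality; equality of the norms then upgrades weak time-continuity to strong continuity through the uniform convexity of $L^p$ for $1<p<\infty$.
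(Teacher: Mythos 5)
Your overall strategy is the paper's: vanishing viscosity, $L^q$ energy estimates giving \eqref{apq} uniformly in $\nu$, the elliptic bound \eqref{CZ} plus a negative-order bound on $\partial_t\omega^\nu$, compactness of the velocity through the embedding $W^{1,p}\hookrightarrow\hookrightarrow L^{p/(p-1)}$ (valid exactly for $p>4/3$, as in the paper's use of Rellich--Kondrachov with $2p/(2-p)>p/(p-1)$), weak--strong convergence of $\omega^\nu v^\nu$, and definition of $\omega^-$ as the weak limit of the boundary traces in the $g$-weighted space. But there is a genuine gap at the point you yourself flag as the ``main obstacle'': with your choice of artificial boundary conditions (Dirichlet $\omega^\nu=\omega^+$ on $\partial\calF^+$, homogeneous Neumann elsewhere), the viscous flux $\nu\int_{\partial\calF^{+}}\partial_n\omega^\nu\,\varphi$ survives both in the weak formulation and, tested with $|\omega^\nu|^{q-2}\omega^\nu$, in your $L^q$ estimate, and none of your uniform bounds control it: the bound on $\sqrt{\nu}\,\nabla\omega^\nu$ in $L^2L^2$ gives no trace information, and imposing a Dirichlet datum at the inlet creates precisely the boundary layer that makes $\nu\partial_n\omega^\nu$ order one there. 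So your claimed inequality \eqref{apq} and the passage to the limit are both unproven as written. The paper removes the difficulty by \emph{design} of the approximation rather than by estimation: the Robin-type penalization \eqref{equ:app:ssin2}, $\nu\partial_n\omega_\nu=(\omega_\nu-\omega^+_\nu)g_\nu\mathds{1}_{\partial F_{+}}$, makes the viscous boundary flux combine algebraically with the advective one, so the weak formulation \eqref{weak:for:SS:MB} contains the prescribed $\omega^+_\nu$ on the inlet and \emph{no} viscous boundary term at all, and the $L^q$ estimate \eqref{est:con:Gun} then closes by convexity. To repair your proof you should adopt this boundary condition (or an equivalent one); a direct vanishing argument for the Dirichlet-inlet flux is not available from the stated bounds.

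A second, subtler gap is in your endgame. You ``invoke the renormalization of the limiting transport equation'' on the grounds that $v\in L^\infty_t W^{1,p}$ is divergence free and $\omega\in L^\infty_t L^p$; but the DiPerna--Lions commutator argument (and the paper's Proposition \ref{exi-pq}, part 2, following Boyer's trace theory) requires \emph{conjugate} exponents, i.e.\ $\nabla v\in L^{p/(p-1)}$ against $\omega\in L^p$ so that $\omega\,\nabla v\in L^1$. Here $\nabla v$ is only in $L^p$, and for $4/3<p<2$ one has $p<p/(p-1)$, so the distributional limit cannot be directly renormalized on this range; your shortcut is legitimate only for $p\geq 2$, where $W^{1,p}\subset W^{1,p/(p-1)}$ on the bounded domain. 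The paper circumvents this by not renormalizing the limit at all: it constructs the unique renormalized solution for the limit velocity by Proposition \ref{exi-pq} (whose existence part for non-conjugate exponents uses the composition trick with a bounded injective $b$) and identifies it with the vanishing-viscosity limit through a duality formula obtained by solving backward \emph{viscous} dual problems and passing to the limit, in the spirit of Crippa--Spirito (Section \ref{sec-pr-ren}); equality in \eqref{apq}, strong continuity in time via uniform convexity, and the $p=\infty$ bound \eqref{apinf} by weak-star lower semicontinuity then follow as in the paper.
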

As already mentioned above this result has been obtained by  Mamontov and Uvarovskaya, see \cite{MU}. 
However we will provide in Section \ref{cas43}  a  slightly different proof based on some viscous approximations which are introduced in Section \ref{sec-va}.

 \subsection{Existence of renormalized solutions}
 
Now let us turn our attention to the part (ii) of Theorem \ref{th-rough}, that is to the existence of  renormalized solutions with $ L^{p}$  vorticity when  $ p > 1 $. 
It is based on the observation that for smooth vorticities satisfying  the transport equation  \eqref{vort-ss2-a} and for any function $ \beta \in C_{b}^1(\bbR) $, by the chain rule,
\begin{equation*}
\partial_t \beta(\omega) + v \cdot \nabla \beta(\omega) =  0 .
\end{equation*}
Then, for  any test function $ \varphi  $ in $C^\infty_c ([0,+\infty) \times \overline{\calF} ; \R)$,  by multiplying the previous equation  by $\varphi  $ and integrating by parts taking into account the boundary condition: $v\cdot n =  g $ on $\partial \calF$ and 
$ \omega = \omega^+$ on $ \partial \calF^{+}$, we arrive at 
\begin{align}
\label{wf:equ:ss:ren}
\int_{\calF} \beta(\omega^{in})\varphi(0,.) dx + \int_{\bbR^{+}}  \int_{\calF} \beta(\omega) (\partial_t \varphi +  v \cdot \nabla \varphi ) dx dt = \, &\int_{\bbR^+}\int_{\partial \calF^{+}} g \beta(\omega^+) \varphi ds dt \\ & + \int_{\bbR^+}\int_{\partial \calF^{-}} g \beta(\omega^{-}) \varphi ds dt. \nonumber
\end{align}
The terminology ``renormalized solution'' precisely refers to a vorticity satisfying  such identities. 
\begin{Theorem}
\label{exi:Lp:ss}
Let  $ p \in (1,\infty]$. Let $ \calC_{i}^{in} \in \bbR$ for $i\in \calI $ the initial circulations around $ \pS^i $.
 Let $g \in L^{1}_{loc}(\bbR^{+}; W^{1-1/p,p}(\partial \calF))$ in the case where  $p \in (1, \infty )$, and $g \in L^1_{loc}(\bbR^{+};W^{1,\infty}(\partial \calF))$ in the case where $ p = \infty$.
Assume that $g$ is source-sink compatible, see (\ref{SSC:g}).
 Let   $(\omega^{in} , \omega^{+}  )$ a couple of input vorticities (\ref{CIV}) in $L^p$.
Then there exists  a continuous couple $ (\omega, \omega^-) $ with values in $ L^p $ (see  (\ref{CC}))  is a renormalized solution, that is,  it satisfies \eqref{wf:equ:ss:ren}
for  any test function $ \varphi  $ in $C^\infty_c ([0,+\infty) \times \overline{\calF} ; \R)$, 
with $v$ given by   \eqref{vort-ss2-d} and \eqref{vort-ss2-e}. Moreover it satisfies 
    \eqref{apq} with equal sign and with $q=p$  for $ p < \infty $ and \eqref{apinf} for $ p = \infty$.
\end{Theorem}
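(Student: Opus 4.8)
The plan is to construct the renormalized solution as a vanishing-viscosity limit, following the strategy of Crippa and Spirito \cite{CS} but adapting it to the bounded domain $\calF$ and to the inflow/outflow boundary conditions. For each $\nu>0$ I would use the viscous approximation introduced in Section \ref{sec-va}: a vorticity $\omega_\nu$ solving a transport--diffusion equation $\partial_t\omega_\nu + v_\nu\cdot\nabla\omega_\nu - \nu\Delta\omega_\nu = 0$ with an artificial boundary condition (Dirichlet value $\omega^+$ on the inlet $\partial\calF^+$ and a no-flux condition on $\partial\calF^-\cup\partial\Omega$), where $v_\nu$ is reconstructed from $\omega_\nu$ and from the circulations $\calC_i^\nu$ through the decomposition \eqref{vort-ss2-d}--\eqref{vort-ss2-e}. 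Parabolic regularity makes each $\omega_\nu$ smooth, so that it is in particular a renormalized solution of its own equation, with the viscous right-hand side $\nu\Delta\beta(\omega_\nu)-\nu\beta''(\omega_\nu)\abs{\nabla\omega_\nu}^2$ for every $\beta\in C^2_b(\R)$.

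Next I would derive the uniform estimates. Testing the viscous equation against $\abs{\omega_\nu}^{p-2}\omega_\nu$ and using that diffusion only dissipates, I obtain \eqref{apq} with $q=p$ as an inequality, uniformly in $\nu$; this controls $\omega_\nu$ in $L^\infty(0,T;L^p(\calF))$ and, crucially, the outflow trace through the term $\int_0^t\int_{\partial\calF^-} g\abs{\omega_\nu^-}^p$. The Calder\'on--Zygmund estimate \eqref{CZ} then bounds $K_H[\omega_\nu]$, hence $v_\nu$, in $L^\infty(0,T;W^{1,p}(\calF))$. I extract a subsequence with $\omega_\nu\rightharpoonup\omega$ weakly-$*$ in $L^\infty_tL^p_x$, with $\omega_\nu^-\rightharpoonup\omega^-$ weakly in $L^p(g\,ds\,dt)$, and with $\calC_i^\nu\to\calC_i$; a standard Aubin--Lions argument, exploiting the one-derivative smoothing of the operator $K_H$ to control the time derivative, upgrades this to strong convergence $v_\nu\to v$ in $L^q_{loc}$, with $v$ given by applying \eqref{vort-ss2-d} to the limits. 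For a source $i\in\calI^+$ the circulation is prescribed data and converges trivially, whereas for a sink $i\in\calI^-$ convergence follows from that of $\omega_\nu^-$ via \eqref{vort-ss2-e}.

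The heart of the argument, which I expect to be the main obstacle, is to prove that the limit $\omega$ satisfies the renormalized identity \eqref{wf:equ:ss:ren}. Since the product $\omega v$ need not be integrable when $p\le 4/3$, one cannot first produce a distributional solution and then renormalize; the identity must be obtained \emph{directly} at the level of $\beta(\omega)$, for which $\beta(\omega)\in L^\infty$ makes $\beta(\omega)v\in L^{p^*}$ harmless, where $p^*=2p/(2-p)$ is the Sobolev exponent. I would establish this through a DiPerna--Lions commutator estimate \cite{DL} adapted to the Euler structure: because $v=v_g+\sum_{i\in\calI}\calC_iX_i+K_H[\omega]$ and $K_H$ is a singular integral of $\omega$, the commutator $v\cdot\nabla(\rho_\eta*\omega)-\rho_\eta*(v\cdot\nabla\omega)$ can be shown to vanish as $\eta\to0$ for every $p>1$, whereas the naive estimate only reaches $p\ge2$; covering the range $1<p<2$ is precisely the refinement of \cite{CS} that must be transported to the present bounded, multiply-connected setting. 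Combined with the energy balance read off from \eqref{apq} with $\beta=\abs{\,\cdot\,}^p$, the renormalized identity upgrades the weak convergence of $\omega_\nu$ to strong convergence in $C(0,T;L^p(\calF))$ (by uniform convexity of $L^p$ for $1<p<\infty$) and makes the viscous defect $\nu\beta''(\omega_\nu)\abs{\nabla\omega_\nu}^2$ vanish; this strong convergence is what provides the time-continuity required by \eqref{CC}.

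Finally I would treat the boundary, which is the genuinely new difficulty with respect to \cite{CS}. The inlet contribution $\int\int_{\partial\calF^+}g\beta(\omega^+)\varphi$ passes to the limit at once since $\omega^+$ is fixed data; the outlet contribution $\int\int_{\partial\calF^-}g\beta(\omega^-)\varphi$ requires identifying the weak limit of the traces $\omega_\nu^-$ with a genuine trace $\omega^-$ of the limit vorticity and commuting it with $\beta$, for which the strong convergence obtained above is the decisive input. It then remains to verify that the limiting velocity obeys \eqref{vort-ss2-d}--\eqref{vort-ss2-e}, that the conservation law \eqref{bouse2} and the circulation dynamics pass to the limit, and that \eqref{apq} holds with equality; this completes the proof. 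The case $p=\infty$ is handled in the same way, replacing \eqref{CZ} by the log-Lipschitz bound \eqref{CZ-infty}, weak-$*$ convergence in $L^p$ by the weak-$*$ topology appearing in \eqref{CC}, and \eqref{apq} by \eqref{apinf}, the renormalization being classical for bounded vorticity.
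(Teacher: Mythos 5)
Your overall skeleton --- viscous approximation with the penalised boundary condition, the \textit{a priori} bounds \eqref{apq} via convex test functions, weak compactness of $\omega_\nu$, of the outflow traces and of the circulations, strong compactness of $v_\nu$, and a final uniform-convexity argument for strong convergence --- coincides with the paper's (Lemmas \ref{reg-data} and \ref{visc-ex}, Proposition \ref{visc-appr}, Theorem \ref{the:str}). The genuine gap is at the identification step, which you propose to carry out by a DiPerna--Lions commutator estimate, asserting that the commutator $v\cdot\nabla(\rho_\eta*\omega)-\rho_\eta*(v\cdot\nabla\omega)$ ``can be shown to vanish as $\eta\to0$ for every $p>1$'' and attributing the range $1<p<2$ to a refinement of \cite{CS}. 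This misreads \cite{CS}: their contribution is \emph{not} a sharpened Eulerian commutator bound. With only $\omega\in L^\infty_t L^p_x$ and $\nabla v\in L^p_x$, the DiPerna--Lions commutator is dominated by a product of maximal functions $M(\nabla v)\,M(\omega)$ lying merely in $L^{p/2}$, so the standard argument genuinely requires $\tfrac1p+\tfrac1p\le 1$, i.e.\ $p\ge 2$; no Eulerian commutator estimate covering $1<p<2$ is available. What \cite{CS} actually use --- and what the present paper transports to the source/sink setting --- is a \emph{duality} argument resting on the well-posedness of the \emph{linear} transport problem (here Proposition \ref{exi-pq}, built on Boyer's trace theory \cite{Boyer} as adapted in Remark \ref{rem:4}): one takes the unique renormalized couple $(\bar\omega,\bar\omega^-)$ for the limit field $v$, solves the backward viscous problem \eqref{vis:back:trans:equ}, passes to the limit in the viscous duality identity \eqref{amel2} to get \eqref{dua1bis}, and compares with \eqref{dua2} to conclude $(\omega,\omega^-)=(\bar\omega,\bar\omega^-)$; only the weak convergences are needed for this passage.

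There is also a circularity in your ordering at the boundary. The identity \eqref{wf:equ:ss:ren} you want to prove contains the outflow term $\int\int_{\partial\calF^-}g\,\beta(\omega^-)\varphi$; an interior mollification argument can at best yield $\partial_t\beta(\omega)+v\cdot\nabla\beta(\omega)=0$ in $\mathcal{D}'$ inside $\calF$, and since $L^p$ functions have no traces, relating the boundary datum in \eqref{wf:equ:ss:ren} to the weak limit $\omega^-$ of the viscous traces requires precisely the trace theory of point $2)$ of Proposition \ref{exi-pq}. You defer this identification until after strong convergence, but your strong convergence is itself extracted from the renormalized identity together with the energy equality \eqref{apq}, whose boundary term is the quantity not yet identified. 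The paper breaks this circle by performing the duality identification first, with weak limits only, and proving the strong convergence of Theorem \ref{the:str} afterwards, via a strictly convex $G$, a second duality argument \eqref{duaG1}--\eqref{duaG2}, pointwise convergence lemmas (Lemma \ref{lem:ref}, Corollary \ref{cor:ref}), Vitali's theorem, and uniform convexity of $L^p$. A further, more minor, elision: your Aubin--Lions bound on $\partial_t\omega_\nu$ is not immediate for $p\in(1,4/3]$, where $\omega_\nu v_\nu$ fails to be integrable; at that point the paper resorts to the symmetrized formulation \eqref{wf:1:equ:ss-nu} and the boundedness of $H_\varphi$ from Lemma \ref{Hbd}.
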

The proof of Theorem  \ref{exi:Lp:ss} is given in Section \ref{sec-pr-ren}.


\section{Precise existence results of symmetrized solutions}
\label{SF}

In this section we give a precised statement of the   point \eqref{point3}  of Theorem \ref{th-rough}. 
 We first recall the symmetrization argument, which has appeared in the works 
 \cite{T}, \cite{Del} and \cite{S}, and which leads to the formulation hinted in the point \eqref{point3}  of Theorem \ref{th-rough}.
The starting point of this argument is to recast the nonlinear term 
 $\omega  v$ of \eqref{conservative}  in a weak sense by using the integral expression of $v$ in terms of $\omega$.

\subsection{Case of a full plane}
 To recall this idea in a simple way let us consider the case where $v$ is given in terms of $\omega$
 by the usual Biot-Savart law in $\R^2$: 
\begin{equation}
\label{KR2}
v (x) =  \int_{\R^2}  K_{\R^2} (x,y) \omega (y) \, dy \quad   \text{ where } K_{\R^2} (x,y) := \frac{(x-y)^\perp  }{ 2\pi \vert x-y \vert^2}  .
 \end{equation}
Observe that we have dropped here the time variable  to simplify the exposition of the core of the argument for which it only plays the role of a parameter. 
Then for any test function $ \varphi \in C^{\infty}_{c}( \R^2) $, 
\begin{align} \nonumber
  \int_{\R^2}  \omega (x) v(x) \cdot \nabla \varphi (x) \, dx &=  \int_{\R^2}\int_{\R^2} K_{\R^2} (x,y) \cdot  \nabla \varphi (x) \omega (x)   \omega (y)  \, dx  \, dy 
  \\  \label{montagny}
  &= \frac12  \int_{\R^2}\int_{\R^2} K_{\R^2} (x,y) \cdot ( \nabla \varphi (x) - \nabla \varphi (y)) 
    \omega (x)   \omega (y)  \, dx  \, dy ,
 \end{align}
by symmetrization. The interest of the symmetrization is that, when $x$ and $y$ are close, one has, by Taylor's expansion, 
\begin{equation}
\label{justaylor}
\nabla \varphi (x) - \nabla \varphi (y) \sim D^2  \varphi (x) (x-y),
 \end{equation}
 and therefore the term 
\begin{equation}
\label{alpes}
 K_{\R^2} (x,y) \cdot ( \nabla \varphi (x) - \nabla \varphi (y))
 \sim \frac{1 }{ 2\pi} D^2  \varphi (x) \left( \frac{x-y}{\vert x-y \vert },  \frac{(x-y)^\perp}{\vert x-y \vert }\right)  ,
 \end{equation}
 remains bounded for $(x,y)$ in $\R^2 \times \R^2$. As a consequence the right-hand-side of \eqref{montagny} makes sense for a vorticity $\omega$ in $L^1 (\mathcal F) $.

\subsection{Case of a general domain}
\label{genecase}
For a general domain $\mathcal F$, by  \eqref{KHL},  and using a symmetrisation with respect to $x$ and $y$ as above, 
we obtain that,  for any test function $ \varphi \in C^{\infty}( \mathcal F) $, 
\begin{equation}
\label{lasalle}
\int_{\calF} \omega  K_{\calF}[\omega] \cdot \nabla \varphi = 
\int_{\calF} \int_{\calF}H_{\varphi}(x,y)\omega(t,x)\omega(t,y) \, dx \, dy,
\end{equation}
where $H_\varphi$ is the following  auxiliary function
\begin{equation}
\label{auxH}
 H_\varphi(x,y) = \frac{1}{2} \Big( \nabla_x  \varphi (x) \cdot K (x,y) + \nabla_y\varphi(y)\cdot K(y,x)\Big).
\end{equation}
It is classical, see \cite{Bramble,Eidus,gilbarg-trudinger}, that 
 there exists a constant $C$ such that for every $x, y \in \calF$, 
\begin{equation} \label{acheck}
\vert K (x, y) \vert
  \leq 
    \frac{C}{\abs{x - y}}.
\end{equation}
Moreover, since in the case of the full plane the counterpart of the function  $H_\varphi$ 
is half  the function given in the left hand side of \eqref{alpes}, which is bounded, one could wonder whether in the general case  
$ H_\varphi$ is bounded or not. 
Since $ H_\varphi$  can be decomposed as 
\begin{equation}
\label{decompH}
 H_\varphi(x,y) = \frac{1}{2}  \nabla_x  \varphi (x) \cdot  \Big(  K (x,y) + K(y,x)\Big)
 -
\frac{1}{2}  \Big(  \nabla_x  \varphi (x) - \nabla_y\varphi(y) \Big) \cdot K(y,x),
\end{equation}
where the second term is bounded 
thanks to \eqref{justaylor} and \eqref{acheck}, the question reduces to determine whether  
$ K (x,y) + K(y,x)$ is bounded or not. 
Indeed in the interior of the domain, the desingularization still occurs because the Biot-Savart law associated with any domain 
 is, away from the boundary, a regular perturbation of the Biot-Savart law associated with the full plane and given in \eqref{KR2}. 
 More precisely $ K (x,y) $ can be decomposed as 
\begin{equation}
\label{et}
   K (x,y) =  K_{\R^2} (x,y) + R(x,y), 
\end{equation}
with $R$ smooth in the interior set $\calF \times \calF$. 
Thus 
\begin{equation*}
     K (x,y) + K(y,x)   = R(x,y) + R(y,x) ,
\end{equation*}
is bounded on any compact subset of $\calF \times \calF$. 
As a consequence the right-hand-side of \eqref{lasalle} makes sense for a vorticity $\omega$ in $L^1 (\mathcal F) $ and for any test function $ \varphi \in C^{\infty}_{c}( \mathcal F) $.

On the other hand, close to the boundary, some caution is needed. To illustrate the difficulty at stake, let us first consider the case where the fluid occupies a half-space. 
The Green function associated with the Laplace operator  in the right half-plane
 with the Dirichlet condition on 
$ \R \times  \{  0 \}$ 
 is given for any $x = (x_1, x_2)$ and $ y = (y_1, y_2) $ in ${\mathcal F}$ by
\begin{equation*} 
      \frac{1}{4\pi}
        \ln \left(1 + \frac{4 x_2 y_2}{\abs{x-y}^2}\right) .
  \end{equation*}
The gradient of this function with respect to its first variable, is then given by
\begin{equation*} 
      \frac{1} {\pi (\abs{x - y}^2 + 4 x_2 y_2)}   \Bigl((0, y_2) - 2 x_2 y_2 \frac{x - y}{\abs{x - y}^2}\Bigr)    ,
  \end{equation*}
and its symmetrization is therefore
\begin{equation}
  \label{testco}
      \frac{(0, x_2 + y_2)}{\pi(\abs{x - y}^2 + 4 x_2 y_2)}.
  \end{equation}
  On the one hand the  tangential component vanishes, on the other hand the normal component is singular when $x$ and $y$ are close to each other and to the boundary. 
  Thus the cancellation observed in the case of the full plane is maintained in the case of  a half-plane up to the boundary for the tangential component but not for the normal one.
  Looking back to \eqref{decompH} we observe that for any test function  $ \varphi $ in $C^{\infty} ( \overline{  \mathcal F}) $ with $ \nabla  \varphi$ normal to the boundary, 
  the cancellation observed above in the case of the full plane is maintained and 
  the auxiliary function  $ H_\varphi$ is again  bounded, whereas it is not the case for a general test function $ \varphi $ in $C^{\infty} ( \overline{  \mathcal F}) $.


This conclusion can be extended to any bounded domain with smooth boundary by using the mirror method, which is the  local approximation 
of the Green function of a general bounded domain associated with the Dirichlet  on the boundary  condition on $\partial \mathcal F$ by %
$$
G(x,y)=\frac{1}{2\pi}\ln\frac{|\overline{x}-y|}{|x-y|}+O(1),
$$
as $x,y\to x_0\in\partial\mathcal F$, in $C^1$, where $\overline{x}$ 
is  the mirror image of $x$ through $\partial\mathcal F$. 
This  mirror image of $x$ is well-defined for $x$ sufficiently close to the boundary  $\partial \mathcal F$ by the formula $\overline{x}:=2p(x)-x$, where 
$p(x)$ is  the orthogonal projection of $x$ on $\partial \mathcal F$ that is the element of $\partial \mathcal F$ 
  which satisfies 
 $|p(x)-x|=d(x)$, where $d(x)$ denotes the  distance to the boundary. Moreover there is   a neighborhood $\mathcal V$ of  $\partial \mathcal F$ where $p$ is $C^1$ and its derivative can be explicitly computed  in terms of $d(x)$, of  the unit tangent vector to $\partial\mathcal F$ at $p(x)$,  and of the curvature of $\partial\mathcal F$ at $p(x)$.
 This allows to approximate the function $H_\varphi(x,y)$ in $C^0$ as $x,y\to x_0\in\partial\mathcal F$ and to conclude that the following property holds for the auxiliary function $H_{\varphi}(x,y)$ when the test function $\varphi$ is in the space $ \mathfrak{C}_{0}(\calF) $  of the  functions  in $ C^\infty(\overline\calF)$ which are constant on any connected component of the boundary  (with a constant depending on each connected component) and equal to $ 0 $ on $ \partial \Omega $. 

\begin{Lemma}  \label{Hbd}
For any test function  $\varphi $  in $ \mathfrak{C}_{0}(\calF) $, the function $ H_\varphi$ is 
bounded on $\overline\calF\times\overline\calF\setminus\{(x,x)\ ;\ x\in \overline\calF\} $.
\end{Lemma}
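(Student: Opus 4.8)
The plan is to start from the decomposition \eqref{decompH}. Its second term, $-\frac12(\nabla_x\varphi(x)-\nabla_y\varphi(y))\cdot K(y,x)$, is bounded on all of $\overline\calF\times\overline\calF\setminus\Delta$, where $\Delta:=\{(x,x)\ ;\ x\in\overline\calF\}$: the Taylor estimate \eqref{justaylor} gives $|\nabla_x\varphi(x)-\nabla_y\varphi(y)|\leq\|D^2\varphi\|_{L^\infty}|x-y|$, which compensates the singularity $|K(y,x)|\leq C/|x-y|$ of \eqref{acheck}. There remains the first term $\frac12\,\nabla_x\varphi(x)\cdot(K(x,y)+K(y,x))$. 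Since $K(\cdot,\cdot)$ is smooth off the diagonal, $H_\varphi$ is continuous on $\overline\calF\times\overline\calF\setminus\Delta$, so a failure of boundedness could occur only along a sequence $(x_n,y_n)$ with $x_n,y_n\to x_0$ for a common limit $x_0\in\overline\calF$. I would therefore control the behaviour of $K(x,y)+K(y,x)$ as $(x,y)\to(x_0,x_0)$, distinguishing $x_0\in\calF$ from $x_0\in\partial\calF$.

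In the interior case $x_0\in\calF$, I would invoke \eqref{et}: since $K_{\R^2}$ is antisymmetric, $K_{\R^2}(x,y)+K_{\R^2}(y,x)=0$, so $K(x,y)+K(y,x)=R(x,y)+R(y,x)$ remains bounded near $(x_0,x_0)$ because $R$ is smooth on $\calF\times\calF$. Dotting with the bounded vector $\nabla_x\varphi(x)$ closes this case.

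The boundary case $x_0\in\partial\calF$ is the core of the argument, and here I would use the mirror method. Writing $K(x,y)=\nabla_x^\perp G(x,y)$ and using the local approximation $G(x,y)=\frac{1}{2\pi}\ln\frac{|\overline x-y|}{|x-y|}+O(1)$ in $C^1$, the $O(1)$ part contributes, after one differentiation, a bounded vector to $K$. In the singular part, the contribution of $\ln|x-y|$ reproduces $\pm K_{\R^2}$ and symmetrizes to zero as above, while the contribution of $\ln|\overline x-y|$ is differentiated through $\overline x=2p(x)-x$, using that $p$ is $C^1$ on the neighbourhood $\mathcal V$ of $\partial\calF$ with derivative expressible in terms of $d(x)$, the unit tangent, and the curvature. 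To leading order this reproduces the flat computation \eqref{testco}: once $\nabla^\perp$ is taken, the symmetrized kernel $K(x,y)+K(y,x)$ is, modulo bounded terms, directed tangentially to the boundary, the normal part having cancelled as in the half-plane.

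The decisive gain then comes from the assumption $\varphi\in\mathfrak{C}_{0}(\calF)$: being constant on each connected component of $\partial\calF$, such a $\varphi$ has vanishing tangential derivative on $\partial\calF$, so that the tangential component of $\nabla_x\varphi(x)$ is $O(d(x))$ near the boundary. This exactly absorbs the tangential singularity, the model estimate being $\frac{d(x)(d(x)+d(y))}{|x-y|^2+4d(x)d(y)}\leq 1$, which follows from $|x-y|\geq|d(x)-d(y)|$ (as $d$ is $1$-Lipschitz) and hence $|x-y|^2+4d(x)d(y)\geq(d(x)+d(y))^2$. The main obstacle is to make this rigorous for a curved boundary: one must verify that the curvature terms entering the derivative of $p$, together with the mismatch between the tangential and normal frames at $x$, at $y$, and at $x_0$, produce only contributions that are bounded or again compensated by the $O(d)$ vanishing of the tangential derivative of $\varphi$, uniformly as $(x,y)\to(x_0,x_0)$.
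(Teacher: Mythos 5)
Your proposal is correct and follows essentially the same route as the paper's own argument: the decomposition \eqref{decompH} with the Taylor bound \eqref{justaylor} and kernel estimate \eqref{acheck}, the interior cancellation via the antisymmetry of $K_{\R^2}$ in \eqref{et}, the mirror method reproducing the half-plane computation \eqref{testco} near the boundary, and the $O(d(x))$ vanishing of the tangential part of $\nabla\varphi$ for $\varphi\in\mathfrak{C}_0(\calF)$ absorbing the remaining tangential singularity. Your explicit model estimate $\frac{d(x)(d(x)+d(y))}{|x-y|^2+4d(x)d(y)}\leq 1$ via the $1$-Lipschitz property of $d$ is a nice concrete addition, and the curvature bookkeeping you flag as the remaining obstacle is left at exactly the same level of detail in the paper's sketch.
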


Thanks to \eqref{u:dec:ss}, we can recast the nonlinear term 
 $\omega  v$ of \eqref{conservative}  in a weak sense for any time-dependent test function  $ \varphi  $ in $C^\infty_c ([0,+\infty); \mathfrak{C}_0(\calF))$, 
\begin{equation}\label{REF}
\int_{\calF} \omega v\cdot \nabla \varphi = \int_{\calF} \omega v_g \cdot \nabla \varphi  + \sum_{i}  \calC_i(t)  \int_{\calF} \omega X_i \cdot \nabla \varphi + \int_{\calF} \int_{\calF} H_{\varphi}(x,y)\omega(t,x)\omega(t,y) \, dx \, dy .
\end{equation}

\subsection{Symmetrized formulation with  $ L^{1}$  vorticity}

With the previous considerations in hand we are now ready to precise the sense in which the solutions are considered  in the point \eqref{point3}  of Theorem \ref{th-rough}.
Combining  \eqref{u:dec:ss}, \eqref{wf:equ:ss:noren}, \eqref{lasalle} and  \eqref{REF} 
we are led to the following definition. 
\begin{Definition}
\label{def-L1-sol}
We say that a weakly continuous couple $(\omega , \omega^{-}  )$ with values in $ L^1$, see   (\ref{WCC}),   is a symmetrized solution to \eqref{transp} 
 if for any  $ \varphi  $ in $C^\infty_c ([0,+\infty); \mathfrak{C}_0(\calF))$, 
\begin{align}
\label{wf:1:equ:ss}
\int_{\calF} \omega^{in}\varphi(0,.) dx & + \int_{\bbR^{+}}  \int_{\calF} \omega \big( \partial_t \varphi + v_g \cdot \nabla \varphi \big) \, dx dt
 +  \sum_{i}  \int_{\bbR^+}  \calC_i(t)  \int_{\calF} \omega X_i \cdot \nabla \varphi  \, dx \, dt \\ & 
 +  \int_{\bbR^+}\int_{\calF} \int_{\calF}H_{\varphi}(x,y)\omega(t,x)\omega(t,y) \, dx \, dy \, dt
  = \int_{\bbR^+}\int_{\partial \calF^{+}} g \omega^+ \varphi ds dt \nonumber
  \\ & \quad \quad  \quad \quad \quad   \quad \quad  \quad \quad \quad   \quad \quad  \quad \quad \quad  
  + \int_{\bbR^+}\int_{\partial \calF^{-}} g \omega^{-} \varphi ds dt,  \nonumber
\end{align}
where
\begin{equation}
\label{formuCi}
\calC_{i}(t) = \calC_{i}^{in} - \int_{0}^{t} \int_{\pS^{i}} \omega^+ g \ \text{ for } i \in \calI^+ \   \text{ and } \
\calC_{i}(t) = \calC_{i}^{in} - \int_{0}^{t} \int_{\pS^{i}} \omega^- g \  \text{ for } i \in \calI^-. 
\end{equation}
\end{Definition}

\begin{Remark} \label{loss}
Observe that the weak formulation \eqref{wf:1:equ:ss}, when restricted to test  functions  $ \varphi(t..) $ in $ \mathfrak{C}_0 $
suffers from a loss of information on the boundary data since it only depends on $ \omega^{+}$ through the integrals
\begin{equation}
\label{discord}
\int_{\partial \calF^{+}} g \omega^+ \, dz ,
\end{equation}
where the time  variable $t$ plays the role of implicit parameter and takes its values in $\R_+$. 
 In particular if one considers two sets of smooth data, for the initial and boundary conditions, for which the entering vorticities are distinct but with the same value for the total entering 
 vorticity flux given by \eqref{discord}, then by the existence and uniqueness result of Yudovich in  \cite{Yudo}  there are corresponding smooth solutions to the system  \eqref{vort-ss2}. These  two solutions  satisfy 
  the weak formulation \eqref{wf:1:equ:ss} for any test  functions  $ \varphi $ in $C^{\infty}_{c}([0,+\infty);\mathfrak{C}_0 (\calF))$, but are distinct since their respective traces on $\partial \calF^{+}$ are supposed to be different. 
  On the other hand, in this approach, this unfortunate to restriction of the test functions   $ \varphi(t..) $ to be in $ \mathfrak{C}_0 $ 
  seems mandatory  because of the discussion in Section 
  \ref{genecase}.
   \end{Remark}


The precise statement hinted in the point \eqref{point3}  of Theorem \ref{th-rough} is the following. 

\begin{Theorem}
\label{exi:L1:ss}
 Let $ \calC_{i}^{in} \in \bbR$ for $i\in \calI $ the initial circulations around $ \pS^i $.
 Let $g \in L^{1}_{loc}(\bbR^{+}; L^{1}(\partial \calF))$  source-sink compatible, see (\ref{SSC:g}).
 Let  $(\omega^{in} , \omega^{+}  )$ a couple of input vorticities  (\ref{CIV}) in $L^1$. 
 Then there is a symmetrized solution  $(\omega , \omega^{-}  )$ to \eqref{transp} in the sense of Definition \ref{def-L1-sol}
 and   satisfying the inequality \eqref{apq} with $q=1$.
\end{Theorem}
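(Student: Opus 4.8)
The plan is to construct the symmetrized solution as a vanishing viscosity limit, following the scheme announced after Theorem \ref{th-rough} and already used for the cases $p>1$, the only genuinely new ingredient being the treatment of the nonlinear term at the $L^1$ level. First I would regularize the data: pick $(\omega^{in}_n,\omega^+_n)$ smooth, (\ref{CIV}) in every $L^p$, converging to $(\omega^{in},\omega^+)$ in $L^1(\calF)$ and in $L^1_{loc}(\bbR^+;L^1(\partial\calF^+,g\,ds))$ respectively; since these sequences converge in $L^1$ they are automatically uniformly integrable. I would likewise approximate $g$ by smooth source--sink compatible $g_n\to g$ so that the potential lift $v_{g_n}$ is bounded, deferring the final passage $g_n\to g$ to the very end. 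For each $n$ I then take the smooth solution provided by \cite{Yudo} (or the viscous approximation of Section \ref{sec-va}); it satisfies the transport equation, the velocity decomposition \eqref{vort-ss2-d}, the circulation formulas \eqref{formuCi}, and, being smooth, the renormalized identity \eqref{wf:equ:ss:ren} for \emph{every} $\beta\in C^1$. In particular the $L^1$ estimate \eqref{apq} with $q=1$ holds with equality, giving uniform bounds on $\|\omega_n(t)\|_{L^1(\calF)}$, on the outgoing flux $\int_0^t\int_{\partial\calF^-}g|\omega_n^-|$, on the circulations $\calC_{i,n}$, and hence on $v_n$ in $L^\infty_{loc}(\bbR^+;L^q(\calF))$ for every $q<2$.

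The second step, which is the mechanism the paper calls propagation of compactness, is to propagate \emph{equi-integrability}. By the de la Vall\'ee-Poussin criterion the uniform integrability of the data furnishes a convex superlinear $\Phi$ with $\Phi(0)=0$ such that $\sup_n\big(\int_\calF\Phi(\omega^{in}_n)+\int_0^T\int_{\partial\calF^+}(-g_n)\Phi(\omega^+_n)\big)<\infty$. Applying \eqref{wf:equ:ss:ren} with $\beta=\Phi$ to the smooth solution $\omega_n$ and discarding the nonnegative outflow term $\int_0^t\int_{\partial\calF^-}g\,\Phi(\omega_n^-)$ yields $\sup_{n,\,t\le T}\int_\calF\Phi(\omega_n(t))<\infty$; by the converse de la Vall\'ee-Poussin this means the family $\{\omega_n(t):n\in\N,\ t\in[0,T]\}$ is equi-integrable in $L^1(\calF)$, and similarly $\{\omega_n^-\}$ in $L^1(\partial\calF^-,g\,ds\,dt)$. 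By the Dunford--Pettis theorem I extract $\omega_n\rightharpoonup\omega$ weakly in $L^1((0,T)\times\calF)$ and $\omega_n^-\rightharpoonup\omega^-$ weakly in $L^1(\partial\calF^-,g\,ds\,dt)$; the conservative form $\partial_t\omega_n=-\div(\omega_n v_n)$ controls $\partial_t\omega_n$ in a negative-order space, so the limit enjoys the weak continuity of \eqref{WCC}. The circulations pass to the limit, $\calC_{i,n}\to\calC_i$ uniformly on $[0,T]$ --- for $i\in\calI^+$ directly from the data, for $i\in\calI^-$ through the weak convergence of the sink fluxes $\int_{\partial\calS^i}g\omega_n^-$ --- and the limit satisfies \eqref{formuCi}.

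The third and decisive step is to pass to the limit in \eqref{wf:1:equ:ss}. The linear terms are routine: $\int\int\omega_n\partial_t\varphi$ and, since $X_i$ is smooth and $\calC_{i,n}\to\calC_i$ uniformly, $\sum_i\calC_{i,n}\int\omega_n X_i\cdot\nabla\varphi$, converge by weak $L^1$ convergence against bounded functions; the inlet term uses the strong convergence $\omega^+_n\to\omega^+$, while the outlet term, because $\varphi\in\mathfrak{C}_0(\calF)$ is constant on each $\partial\calS^i$, reduces to the sink fluxes already shown to converge (this is precisely the loss of pointwise information recorded in Remark \ref{loss}). Everything hinges on the symmetrized nonlinear term $\int_0^T\!\int_\calF\!\int_\calF H_\varphi(x,y)\,\omega_n(t,x)\omega_n(t,y)\,dx\,dy\,dt$. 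Here I would split $H_\varphi=H_\varphi\,\chi_\delta+H_\varphi\,(1-\chi_\delta)$ with $\chi_\delta$ a smooth cutoff supported near the diagonal $\{x=y\}$. For the near-diagonal piece, Lemma \ref{Hbd} bounds $H_\varphi$, so the contribution is at most $\|H_\varphi\|_\infty\sup_{n,x}\!\big(\int_{B(x,\delta)}|\omega_n|\big)\,\|\omega_n\|_{L^1}$, which tends to $0$ as $\delta\to0$ \emph{uniformly in $n$} thanks to the propagated equi-integrability. For the far piece I set $G_n^\delta(t,x):=\int_\calF H_\varphi(x,y)(1-\chi_\delta)(x-y)\,\omega_n(t,y)\,dy$; away from the diagonal the kernel is smooth, so $G_n^\delta$ is uniformly bounded and equi-Lipschitz in $x$, whence Arzel\`a--Ascoli gives uniform convergence $G_n^\delta\to G^\delta$ along a subsequence, and then $\int_0^T\!\int_\calF G_n^\delta\,\omega_n\to\int_0^T\!\int_\calF G^\delta\,\omega$ because a uniformly convergent bounded factor pairs with a weakly $L^1$-convergent one. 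Letting $\delta\to0$ and using equi-integrability of the limit recovers $\int_0^T\!\int_\calF\!\int_\calF H_\varphi\,\omega\otimes\omega$. This is the concentration--cancellation argument of \cite{Del,S} adapted to the $L^1$ setting, and I expect it to be the main obstacle, since it is exactly what forces both the restriction to $L^1$ data (to propagate equi-integrability) and the restriction of test functions to $\mathfrak{C}_0(\calF)$ (so that Lemma \ref{Hbd} applies).

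It remains to collect $(\omega,\omega^-)$, to verify it is a weakly continuous couple \eqref{WCC} with values in $L^1$, and to recover \eqref{apq} with $q=1$ by weak lower semicontinuity of the $L^1$ norm and of the outflow functional together with the convergence of the data. The one genuinely delicate routine point is the $v_{g_n}$-term $\int\int\omega_n v_{g_n}\cdot\nabla\varphi$: with $g$ merely in $L^1(\partial\calF)$ the lift $v_g$ need not be bounded, so I would keep $g$ regularized throughout the compactness argument --- for which $v_{g_n}\in L^\infty$ and the term passes to the limit as above --- and only at the end let $g_n\to g$ in $L^1$, using the uniform $L^1$ bound on $\omega$ and the structure $\nabla\varphi\cdot\tau=0$ on $\partial\calF$ (valid since $\varphi\in\mathfrak{C}_0(\calF)$) to control this last passage. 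The output is a symmetrized solution in the sense of Definition \ref{def-L1-sol}, obtained as a vanishing-viscosity limit, which is exactly the assertion of the theorem.
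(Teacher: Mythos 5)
Your proposal follows essentially the same route as the paper's: regularized compatible data as in Lemma \ref{reg-data}, smooth viscous solutions from Lemma \ref{visc-ex}, propagation of equi-integrability through the convex a priori estimate \eqref{est:con:Gun} combined with De la Vall\'ee Poussin and Dunford--Pettis, weak time-continuity via the Arzel\`a--Ascoli variant (Corollary \ref{th-compact-L1}), and the near-diagonal/far-diagonal splitting of $H_\varphi$ using Lemma \ref{Hbd} for the symmetrized nonlinearity --- this is precisely Proposition \ref{visc-appr} (Steps 1 and 3) followed by the limit passage \eqref{T1}--\eqref{T6} of Section \ref{sec-proof-sym}, including the treatment of the sink circulations and the observation of Remark \ref{loss}. (Two cosmetic remarks: for the boundary data the uniform integrability must be taken with respect to the varying weight $g_\nu\,ds\,dt$, which is exactly what Corollary \ref{Cor:DLVPoussin} is designed for and which your ``automatically uniformly integrable'' glosses over; and your superlinear $\Phi$ is not in $C^1_b$, so one should derive the bound directly at the viscous level as in \eqref{est:con:Gun} rather than by invoking \eqref{wf:equ:ss:ren}.) There are, however, two concrete gaps. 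First, at $p=1$ you cannot bound $\partial_t\omega_n$ ``in a negative-order space'' from the conservative form $\partial_t\omega_n=-\div(\omega_n v_n)$: with $\omega_n$ only bounded in $L^1$ and $v_n$ correspondingly in $W^{1,1}$, the product $\omega_n v_n$ is not bounded in any $L^1$-based space, so the weak equicontinuity needed for Corollary \ref{th-compact-L1} does not follow this way. The paper instead reads the time derivative off the symmetrized formulation \eqref{wf:1:equ:ss-nu} itself: the quadratic term is bounded by $\|H_\varphi\|_{L^\infty}\|\omega_n\|_{L^1}^2$ thanks to Lemma \ref{Hbd}, and the $v_{g_\nu}$-term is handled by the stream-function integration by parts in Step 2 of the proof of Proposition \ref{visc-appr}. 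Since you develop the $H_\varphi$ machinery anyway, this is a short repair, but the step fails as written.

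Second, your closing double limit $g_n\to g$ in $L^1$ does not work and is not what the paper does. With $g$ merely in $L^1(\partial\calF)$ the lift $v_g$ is not bounded (one only gets $v_g\in L^q$ for $q<2$), so the term $\int\!\int \omega\, v_g\cdot\nabla\varphi$ with $\omega\in L^1$ need not even be defined in the limiting formulation, and your appeal to $\nabla\varphi\cdot\tau=0$ on $\partial\calF$ produces no compensating estimate for $\int\!\int\omega\,(v_{g_n}-v_g)\cdot\nabla\varphi$. The paper sidesteps this by running the whole argument, for $p=1$, under the hypothesis of Proposition \ref{visc-appr} that $g\in L^1_{loc}(\bbR^+;W^{1-1/q,q}(\partial\calF))$ for some $q>2$, so that $v_{g_\nu}$ is uniformly bounded, and performing a single vanishing-viscosity limit. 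You have correctly detected a real tension --- the regularity of $g$ displayed in the statement of Theorem \ref{exi:L1:ss} is weaker than what the compactness machinery uses --- but the resolution consistent with the paper is to work with the stronger hypothesis on $g$, not to insert an unjustified second approximation layer whose final passage cannot be closed at the $L^1$ level.
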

The proof of Theorem  \ref{exi:L1:ss} is given in Section \ref{sec-proof-sym}.


\section{Remainder on  the transport equation with given non-tangential velocity}
\label{sec-transport}

In this section we recall a few instrumental facts regarding the transport equation 
\begin{subequations} \label{transp}
\begin{align}
\partial_t \omega + v \cdot \nabla \omega = & \, 0 && \text{ in } \bbR^{+} \times \calF, \label{transp:1} \\
\omega = & \, \omega^+ && \text{ on } \bbR^{+} \times \partial \calF^{+},  \\
\omega(0,.) = & \, \omega^{in} && \text{ in } \calF ,
\end{align}    
\end{subequations}
where the \ref{SSC:v} vector field $v$ is assumed to be given and to satisfy the assumptions:
\begin{gather}
\label{hyp-v}
v  \in   L_{loc}^1 \left(\bbR^{+};  W^{1,q} (\calF)\right) \quad \text{ and } \quad \div v =0 \, \text{ in } \calF, 
\end{gather}
for some $q$ in $[1,+\infty]$.

This is a quite classical topic in the case where the boundary $ \partial \calF$ is impermeable.
In the case where the fluid can enter into and exit of the boundary  $ \partial  \calF $ of the domain, a nice reference is \cite{Boyer} where trace issues, as well as existence and uniqueness of solutions in various senses for the system \eqref{transp} is considered. Note that in \cite{Boyer}, the author assumes extra regularity of the normal component of the velocity on the boundary, namely $ v\cdot n \in L^{\alpha}_{loc}(\bbR^{+} \times \partial \calF) $ for some $ \alpha > 1 $. In our work this condition is removed thanks to the peculiar geometry at stake, as explained in the following remark.

 \begin{Remark}
\label{rem:4}
In this paper we will assume the velocity fields to be \eqref{SSC:v}, which implies that the regions where the normal component of the velocity field has distinguished signs are well-separated, in the sense that there exists a smooth cut-off $ \psi $ that is identically equal to $ 1 $ in a neighborhood of $ \partial \calF^+ $ and identically equal to $ 0 $ a neighborhood of  the remaining  boundary. With the help of this cut-off it is then possible to deduce information on $ \omega $ on $ \partial \calF^+ $, and on $ \partial \calF^-$,  from the value of $ \omega $ in $ (0,T) \times \calF $. To do that is enough to multiplying \eqref{transp:1} with the smooth cut-off $  \psi $ (or $ 1- \psi $), to integrate in $ (0,T) \times \calF $ and to do some integrations by parts. 
In the general case when the normal component of the velocity does not have a sign, it is not clear  how to define a cut-off that separates the regions where $ v\cdot n $ has different signs. In \cite{Boyer}, this difficulty is tackled by assuming that $ v\cdot n \in L^{\alpha}_{loc}(\bbR^{+} \times \partial \calF) $ for some $ \alpha > 1 $. This hypothesis is used in an essential manner in \cite[(3.14 )]{Boyer}.
Within our geometrical setting, with the separation  of the regions of the boundary where the normal component of the velocity field has different signs, the results showed in \cite{Boyer} are valid without the extra integrability assumption  $ v\cdot n \in L^{\alpha}_{loc}(\bbR^{+} \times \partial \calF) $ for some $ \alpha > 1 $.
\end{Remark}

 \subsection{Distributional and renormalized  solutions}
In this subsection we recall the definitions of distributional and renormalized  solutions to the  transport equation  \eqref{transp}. 
We recall first the following terminology.
\begin{Definition}
We say that $p$ and $q$ in $[1,+\infty]$ are conjugated if $\frac1p + \frac1q =1$.
\end{Definition}
Distributional solutions of the transport equations are then defined as follows. 
\begin{Definition}
\label{def-reno}
Let $p$ in $[1,+\infty]$, let $ v $ satisfying the hypothesis \eqref{hyp-v}, with $q$ such that $p$ and $q$ in $[1,+\infty]$ are conjugated, and let $(\omega^{in} , \omega^{+}  )$ a (\ref{CIV}) in $ L^p $.
We say that a continuous couple (\ref{CC}) with values in $ L^p$ $(\omega , \omega^{-}  )$ is a distributional solution to \eqref{transp} if for any 
$ \varphi  $ in $C^\infty_c ([0,+\infty) \times \overline{\calF} ; \R)$,   the identity  \eqref{wf:equ:ss:noren} is satisfied. 
\end{Definition}
On the other hand renormalized solutions of the transport equations  are  defined as follows whatever $p$ and $q$ are conjugated or not. 
\begin{Definition}
\label{def-no}
Let $p$ in $[1,+\infty]$, let $ v $ satisfying hypothesis \eqref{hyp-v} and $(\omega^{in} , \omega^{+}  )$ a (\ref{CIV}) in $ L^p $. 
We say that a continuous couple (\ref{CC}) with values in $ L^p$ $(\omega , \omega^{-}  )$ is a renormalized solution to \eqref{transp} if for any 
$ \beta \in C_b^1(\bbR) $ and  for any 
$ \varphi  $ in $C^\infty_c ([0,+\infty) \times \overline{\calF} ; \R)$,  the identity  \eqref{wf:equ:ss:ren} is satisfied.
\end{Definition}

 \subsection{Existence and uniqueness results}

The following result gathers some results regarding the existence and uniqueness of distributional and renormalized solutions to the transport equation  \eqref{transp}  and a duality formula.
\begin{Proposition}
\label{exi-pq}
Let $p$ and $q$ in $[1,+\infty] $, let $ v $ satisfying the hypothesis \eqref{hyp-v}, let  $(\omega^{in} , \omega^{+}  )$ a (\ref{CIV}) in $L^p$. 
Then we have the following results.
\begin{enumerate}[1)]
\item  If  $p$ and $q$ in $[1,+\infty]$ are conjugated, there exists a unique distributional solution $(\omega , \omega^{-}  )$ in $L^p$ to \eqref{transp} (in the sense of Definition 
\ref{def-reno}).
\item If  $p$ and $q$ in $[1,+\infty]$ are conjugated, any distributional solution $(\omega , \omega^{-}  )$ in $L^p$  to \eqref{transp}  (in the sense of Definition 
\ref{def-reno}) is a renormalized solution to \eqref{transp}
(in the sense of Definition 
\ref{def-no}).
\item There exists a unique renormalized solution $(\omega , \omega^{-}  )$ in $L^p$ to \eqref{transp} (whatever $p$ and $q$ are conjugated or not).
\item \label{exi-pqduality}
 If  $p$ and $q$ in $[1,+\infty]$ are conjugated, if 
$(\omega , \omega^{-}  )$ is a  $L^p$ renormalized solution  to \eqref{transp},
$\Psi $ is in $L^q ( [0,T] \times \partial \calF^{-} ;g \, dz \, ds)$, $\phi_{T}$ is in  $L^q ( \calF)$, $\chi $ is in $L^1  ( [0,T]  ;L^q( \calF))$, 
and $(\phi, \phi^+) $ is a  $L^q$ renormalized solution of the backward transport equation 
\begin{align*}
-\partial_t \phi -v \cdot \nabla \phi = & \, \chi \quad && \text{ in } [0,T] \times \calF, \nonumber \\
 \phi = & \, \Psi  \quad && \text{ on } [0,T] \times \partial \calF^{-},  \label{rrr} \\
\phi(T,x) = & \, \phi_{T}(x), &&  \nonumber
\end{align*}
then 
\begin{equation} \label{dua1}
    \int_0^{T}\int_{\calF} \omega \chi + \int_0^{T} \int_{\partial \calF^{-}} g \omega^{-} \Psi = \int_{\calF} \omega^{in} \phi(0,.) - \int_{\calF} \omega(T,.) \phi_{T} - \int_{0}^{T} \int_{\partial \calF^{+}} g \omega^{+} \phi^{+}.
\end{equation}
\end{enumerate}
\end{Proposition}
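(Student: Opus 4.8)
The plan is to follow the DiPerna--Lions renormalization theory, adapted to the permeable boundary along the lines of \cite{Boyer}, the one new point being that the geometric separation of $\partial\calF^+$ and $\partial\calF^-$ recorded in Remark \ref{rem:4} lets us dispense with the extra integrability $v\cdot n\in L^\alpha$ used there. I would organize everything around part 3), from which the remaining parts follow.

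For the existence half of part 3), I would regularize: approximate $v$ by smooth divergence-free fields $v^n$ retaining the sign of the normal trace on each boundary component, and $(\omega^{in},\omega^+)$ by smooth data; solve the resulting smooth transport problem by the method of characteristics sketched in Section \ref{sec-compo}, distinguishing the interior and boundary flows \eqref{eq:char1}--\eqref{eq:char2}; and record the a priori bounds \eqref{apq}--\eqref{apinf}, which hold with equality for smooth solutions. These bounds give weak-$*$ compactness in $L^\infty_t L^p$ together with compactness of the outflow traces in $L^p(g\,ds)$. Passing to a weak limit produces a candidate $(\omega,\omega^-)$; the strong compactness needed to pass to the limit inside the nonlinearity $\beta$ comes from the convergence of the $L^p$ norms furnished by the renormalized identity. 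That the limit is renormalized rests on the Friedrichs commutator lemma: mollifying the equation in the interior produces $v\cdot\nabla\omega_\eps-(v\cdot\nabla\omega)_\eps$, which tends to $0$ in $L^1_{loc}$ exactly because $v\in W^{1,q}$ and $\omega\in L^p$, so that $\beta(\omega)$ solves the transport equation inside $\calF$ for every $\beta\in C_b^1$. The boundary contributions are treated by inserting the cut-off $\psi$ of Remark \ref{rem:4}: testing against $\psi$ and against $1-\psi$ isolates $\partial\calF^+$ from $\partial\calF^-$, which both defines the outflow trace $\omega^-$ and legitimizes the boundary integrals using only $g\in L^1_{loc}$. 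Uniqueness follows by renormalizing the difference of two solutions with identical data: choosing $\beta(s)\to|s|^q$ with a test function equal to $1$ in space and cut off in time yields \eqref{apq} with equality, whose left-hand side is then a sum of nonnegative terms bounded above by $0$, forcing $\omega\equiv0$ and $\omega^-\equiv0$.

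Part 2) is the same commutator lemma read the other way: when $p$ and $q$ are conjugate, $\omega\,v\in L^1_{loc}$, so the distributional identity \eqref{wf:equ:ss:noren} makes sense, and the commutator estimate upgrades it to the renormalized identity \eqref{wf:equ:ss:ren}, with the cut-off again controlling the boundary terms. Part 1) then follows at once: existence by approximating the identity by $\beta_R\in C_b^1$ in the renormalized solution of part 3) and passing to the limit, where conjugacy of $p,q$ is precisely what renders $\omega\,v\cdot\nabla\varphi$ integrable; and uniqueness by combining parts 2) and 3), since any distributional solution is renormalized and the renormalized solution is unique.

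For the duality formula of part 4), I would justify the formal identity
\[
\partial_t(\omega\phi)+\div(\omega\phi\,v)=-\omega\chi,
\]
obtained by multiplying the forward equation by $\phi$, the backward equation by $\omega$, and summing with $\div v=0$. Integrating over $[0,T]\times\calF$ and reading the boundary term $\int g\,\omega\phi$ through the prescribed data $\omega=\omega^+$ on $\partial\calF^+$ and $\phi=\Psi$ on $\partial\calF^-$ reproduces \eqref{dua1} verbatim, the remaining boundary traces being $\omega^-$ on $\partial\calF^-$ and $\phi^+$ on $\partial\calF^+$. The rigorous content is that $\omega\phi$ solves this product equation: since $\omega$ and $\phi$ are renormalized and $p,q$ are conjugate, $\omega\phi\in L^1$, and the product rule is justified by renormalizing each factor and passing to the limit, once more using the cut-off to handle the inflow and outflow traces.

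The step I expect to be the main obstacle is the boundary analysis common to all four parts: constructing the outflow trace $\omega^-$ for a vorticity that is only $L^\infty_t L^p_x$, hence outside the scope of classical trace theorems, and justifying the integrations by parts and the product rule up to the permeable boundary. This is exactly where the separation of $\partial\calF^+$ and $\partial\calF^-$ through the cut-off $\psi$ is indispensable, replacing the hypothesis $v\cdot n\in L^\alpha$ of \cite{Boyer}.
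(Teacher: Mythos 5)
Your plan works only in the conjugate range, and the genuine gap is the non-conjugate case of part 3), which is the one point where the proposition goes beyond \cite{Boyer}. When $1/p+1/q>1$ the product $\omega\,v$ is not locally integrable, so the distributional form of the equation is meaningless and the Friedrichs commutator estimate you invoke — which needs precisely $\omega\in L^p$, $v\in W^{1,q}$ with $1/p+1/q\le 1$ to conclude that $v\cdot\nabla\omega_\eps-(v\cdot\nabla\omega)_\eps\to 0$ in $L^1_{loc}$ — is unavailable. Your uniqueness argument fails for the same structural reason: renormalization is a nonlinear notion, so the difference of two renormalized solutions is not itself renormalized, and you cannot "renormalize the difference" without first having a distributional formulation for it, which is exactly what is missing in this regime. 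The paper's key device, absent from your proposal, is composition with an injective bounded $b\in C^1_b(\R)$: since $(b(\omega^{in}),b(\omega^+))\in L^\infty\cap L^1$ and $1/\infty+1/q\le 1$, parts 1) and 2) (i.e.\ \cite{Boyer}, with the cut-off of Remark \ref{rem:4} replacing the $v\cdot n\in L^\alpha$ hypothesis) apply at the $L^\infty$ level; one then sets $(\omega,\omega^-):=(b^{-1}(\mathfrak{b}),b^{-1}(\mathfrak{b}^-))$ for existence, and for uniqueness observes that $(b(\tilde\omega),b(\tilde\omega^-))$ is a distributional solution to which the $L^\infty$ uniqueness of part 1) applies, injectivity of $b$ transferring the conclusion back. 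No compactness argument is needed for Proposition \ref{exi-pq} at all.

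A secondary but real problem is the strong-convergence claim in your existence scheme: you assert that convergence of $L^p$ norms is "furnished by the renormalized identity", but weak lower semicontinuity applied to the identity for the approximations only yields an inequality for the limit (the interior norm and the outflow term can each lose mass separately), and upgrading it to equality is precisely the content of Theorem \ref{the:str}, whose proof in the paper uses the duality formula of part 4) together with strict convexity — so your organization, which would use this strong convergence to establish part 3) before part 4), is circular. For part 4) itself your product-rule route is workable in spirit, but the paper's argument is simpler and you should adopt it: regularize only $\phi$ with the boundary-preserving kernel of \cite[Sec.\ 2.2]{Boyer}, use the regularization as a test function in the renormalized formulation satisfied by $(\omega,\omega^-)$, pass to the limit to obtain \eqref{dua1} for $(b(\omega),b(\omega^-))$, and finally remove $b$ by an approximation–truncation of $b(x)=x$; this avoids justifying a product rule for two merely renormalized factors.
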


 \begin{proof}[Proof of $1)$]
 The case where  $p$ is in $(1,+\infty]$ is already proved in  \cite{Boyer}. In particular \cite[Th. 4.1]{Boyer}  and \cite[Th. 4.1]{Boyer} deal with respectively  $p=+\infty$ and $p$ in $(1,+\infty)$. 
 Moreover the case where $p=1$ can be tackled along the same way. 
 We briefly recall the sketch of the proof in \cite{Boyer} for sake of completeness. Regarding existence it is enough to consider a viscous approximation, quite similar to the one we will introduce in Section \ref{sec-va}
  and to pass to the limit in the weak formulation from \textit{a priori}  bounds. On the other hand, at a formal level, uniqueness follows from the linearity of the transport equation and of the \textit{a priori}  estimates. To justify rigorously these steps it enough to consider the regularization of the solutions, see \cite[Sec. 2.2]{Boyer}, which satisfy pointwise almost everywhere the transport equation associated with the vector field $ v$  and with a source term that converge to zero in $ L^1 $. The $ L^1 $ \textit{a priori} bound for the difference of two regularized solutions implies uniqueness.   \end{proof}

 \begin{proof}[Proof of $2)$]
It is a consequence of point $ 2 $ of \cite[Th. 3.1]{Boyer}. Indeed, in this reference, a stronger result is proved, since  $ \omega $  is there only assumed to satisfy \eqref{transp} for test functions supported away from the boundary. Then the existence of traces on the boundary such that the weak formulation holds for test functions supported up to the boundary is proved as a consequence of the fact that  $ \omega $ satisfies the transport equation in the distributional sense inside. As presented in Remark \ref{rem:4}, in \cite{Boyer} the velocity field satisfies the extra hypothesis $ v\cdot n \in L^{\alpha}_{loc}(\bbR^{+} \times \partial \calF) $ for some $ \alpha > 1 $, which is replaced in our setting by the fact that the regions where $ v\cdot n  $ has different sign are well-separated.
  \end{proof}
 
  \begin{proof}[Proof of $3)$]
 The existence of a  renormalized solution $(\omega , \omega^{-}  )$ in $L^p$
 in the  case where  $1/p + 1/ q \leq 1 $ follows from points $ 1)$ and $ 2) $. 
 In the case where $ 1/p + 1/ q > 1 $, consider an injective function $ b \in C^1_b(\bbR) $.
  Since $ (b(\omega^{in}), b(\omega^+))\in L^{\infty}\cap L^1 $ and  $ 1/\infty + 1/ q \leq 1 $,  there exists $ (\mathfrak{b},\mathfrak{b}^-) $  a renormalized solution to the transport associated with the data $ (b(\omega^{in}), b(\omega^+)) $. 
  Using the injectivity of $ b $ we define $(\omega, \omega^{-} ) := ( b^{-1}(\mathfrak{b}),b^{-1}(\mathfrak{b}^-))$ and we observe that it is a renormalized solution to the transport equation. Let us now prove the  uniqueness part of the statement. Suppose  that there exists another renormalized solution $(\tilde{\omega}, \tilde{\omega}^-) $ associated with the same initial data.
  By definition, for the same function $b$, the couple $(b(\tilde{\omega}), b(\tilde{\omega}^-)) $ is a distributional solution to the transport equation. From uniqueness of point $ 1) $ we deduce that   $(b(\omega), b(\omega^-)) = (b(\tilde{\omega}), b(\tilde{\omega}^-))  $. Since  $ b$ is injective, this implies that $ (\omega, \omega^-) = (\tilde{\omega}, \tilde{\omega}^-) $.
  \end{proof}
 
 \begin{proof}[Proof of $4)$]
  At a formal level it is enough to test the weak formulation satisfied by $ (\omega,\omega^{-}) $ with $(\phi, \phi^+) $. To show this point rigorously, we proceed as previously by considering an appropriate regularization of  $ \phi $ with a smoothing kernel, see \cite[Sec. 2.2]{Boyer}, which in particular does not change boundary data. This regularization of  $ \phi $   satisfies the transport equation associated with the velocity field $v $ 
  and with right hand side a function that converge to $ \chi $ in $ L^1 $. Then we  consider the weak formulation of  the renormalized transport equation satisfied by $ (\omega, \omega^{-}) $ with as test function the previous regularization of $ \phi$ and we pass to the limit to obtain \eqref{dua1} for $ (b(\omega), b(\omega^{-}))$. Finally we  conclude by approximating $ b(x) = x $ via an appropriate approximation-truncation process.     
  \end{proof}

\section{Smooth viscous approximations}
\label{sec-va}

This section is devoted to an auxiliary system, a  transport-diffusion  equation with a small parameter in front of the diffusion term,  
 which is useful  to construct solutions to the transport equation \eqref{conservative}.

\subsection{A family of viscous approximated models}
\label{sec-vcv}

We consider, for $\nu $ in $(0,1)$, 
\begin{subequations} 
\label{equ:app:ss}
\begin{align}
\partial_t \omega_{\nu} + v_{\nu}\cdot \nabla \omega_{\nu}  = & \,  \nu \Delta \omega_{\nu}  \quad && \text{ in  } \bbR^{+} \times \calF, \label{equ:app:ss1} \\   
  \omega_{\nu} (0,\cdot) = &  \, \omega^{in}_{\nu}  \quad && \text{ in }  \calF, \label{equ:app:ssin} \\   
\nu \partial_n \omega_{\nu}   = &  \, (\omega_{\nu} -\omega^{+}_{\nu})g_{\nu} \mathds{1}_{\partial F_{+}}  \quad && \text{ in } \bbR^{+}\times \partial \calF \label{equ:app:ssin2}, 
 \end{align}
 \end{subequations}
where  $ \omega^{in}_{\nu} $, $ \omega_{\nu}^{+} $ and $ g_{\nu} = v_{\nu} \cdot n $, are given. The notation $\mathds{1}_{\partial F_{+}} $ stands for the set function associated with the inlet $\partial F_{+}$, which is equal to $1$ when $x$ is in $\partial F_{+}$, and $0$ otherwise.

Regarding the family of \eqref{SSC:v} vector fields $(v_{\nu})_{\nu \in (0,1)}$, we will consider several cases:
\begin{enumerate}[(i)]
\item  the family is constant equal to a given vector field: there is a given vector field $v$ such that $v_{\nu} = v$ 
 for each $\nu $ in $(0,1)$, 
 \item the family $(v_{\nu})_{\nu \in (0,1)}$ is not constant but is given,
 \item  the family $(v_{\nu})_{\nu \in (0,1)}$ is related to the family of vorticity $(\omega_{\nu})_{\nu \in (0,1)}$ by \eqref{u:dec:ss}. 
\end{enumerate}
%
 In the two first cases we  assume that the sequence of vector fields 
 $(v_{\nu})_{\nu \in (0,1)}$ 
 satisfy the assumptions:
\begin{gather}
 \label{hyp-vnu}
  \div v_{\nu} =0 \, \text{ in } \calF \quad   \text{ and } \quad v_{\nu}\cdot n =  g_{\nu} \text{ on } \partial \calF.
\end{gather}

%
%

In the case (iii), the system \eqref{equ:app:ss}  is a Navier-Stokes type system with a non-standard boundary condition   which corresponds to a penalisation of the 
Yudovich's boundary conditions  \eqref{fir:sy:ss-Y2}.
A similar approximation system was used in \cite{Boyer}.  
The interest to choose such  boundary conditions appears when considering the weak formulation of the system \eqref{equ:app:ss}: assuming that $\omega_{\nu}$ is a smooth solution of  \eqref{equ:app:ss} and multiplying by a test function $ \varphi $ in $C^{\infty}_{c}(\bbR^{+}\times \overline{\calF};\R) $,  we obtain after some integrations by parts, using \eqref{hyp-vnu}, and some simplifications: 
\begin{gather}
\label{weak:for:SS:MB}
\int_{\calF} \omega^{in}_{\nu} \varphi(0,.) dx +
\int_{\bbR^{+}} \int_{\calF} \omega_{\nu} ( \partial_t   +  v_{\nu} \cdot \nabla ) \varphi   - \nu \int_{\bbR^{+}}\int_{\calF} \nabla \omega_{\nu} \cdot \nabla \varphi 
\\ \nonumber \quad \quad  = \int_{\bbR^{+}} \int_{\partial \calF^{+}} g_{\nu} \omega^+_{\nu} \varphi + \int_{\bbR^{+}} \int_{\partial \calF^{-}} g_{\nu} \omega_{\nu}  \varphi .
\end{gather}
Observe in particular that the integrand in the first integral in the right hand side contains the prescribed value $\omega^+_{\nu}$ rather than the trace of the unknown   $\omega_{\nu}$. 


\subsection{Reminder on the weak compactness in $L^1$}

One of the ingredients to deal with the case $ p = 1 $ is the so called De la Vall\'ee Poussin's lemma which establishes the equivalence of different definitions of uniform integrability. 

\begin{Lemma}[De la Vall\'ee Poussin's lemma]
\label{Delavalle:ain}
Let $ \mathcal{O} \subset \bbR^n $ bounded and measurable and let $ f_j : \mathcal{O} \longrightarrow \bbR $,  with $ j \in \mathbb{N} $,  a sequence of mesurable functions.
 The following definitions of uniform integrability are equivalent.

\begin{enumerate}

\item It holds $$\displaystyle{ \lim_{n \to \infty} \sup_{j\in \mathbb{N}} \int_{\{ |f_j|> n \}} |f_j | = 0}. $$
\item For any $ \eps > 0 $ there exists $  \delta > 0$ such that for any measurable $ A \subset \mathcal{O} $ with Lebesgue measure $ \mu(A) < \delta $,  
$$\sup_{j\in \mathbb{N}} \int_{A} |f_j| < \eps .$$ 

\item There exists a convex even smooth function  $ G: \bbR \longrightarrow \bbR^{+} $  such that 
\begin{equation*}
\lim_{|s| \longrightarrow + \infty} \frac{G(s)}{|s|} = + \infty \quad \text{ and } \quad \sup_{j \in \mathbb{N}} \int G(|f_j|) < + \infty. 
\end{equation*}

\end{enumerate}

\end{Lemma}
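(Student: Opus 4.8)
The plan is to establish the three equivalences by proving $(1)\Leftrightarrow(2)$ and $(1)\Leftrightarrow(3)$ separately; the first pair is elementary, the easy half $(3)\Rightarrow(1)$ is soft, and the construction $(1)\Rightarrow(3)$ is the substance of the lemma. I would open by recording that, since $\calO$ is bounded, condition $(2)$ self-improves to a uniform $L^1$ bound: partitioning $\calO$ into finitely many measurable pieces each of measure below the $\delta$ attached to $\eps=1$ yields $M:=\sup_{j}\int_{\calO}|f_j|<+\infty$. This bound is what couples $(2)$ back to $(1)$.

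For $(1)\Rightarrow(2)$, given $\eps>0$ I would use $(1)$ to pick $n$ with $\sup_j\int_{\{|f_j|>n\}}|f_j|<\eps/2$, and then for any $A$ with $\mu(A)<\delta:=\eps/(2n)$ split $\int_A|f_j|=\int_{A\cap\{|f_j|\le n\}}|f_j|+\int_{A\cap\{|f_j|>n\}}|f_j|\le n\mu(A)+\eps/2<\eps$. For $(2)\Rightarrow(1)$, with $M$ as above Chebyshev gives $\mu(\{|f_j|>n\})\le M/n$ uniformly in $j$; given $\eps>0$ I choose $\delta$ from $(2)$ and then $n$ so large that $M/n<\delta$, whence $\int_{\{|f_j|>n\}}|f_j|<\eps$. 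The implication $(3)\Rightarrow(1)$ is the soft one: if $\sup_j\int_{\calO} G(|f_j|)=:C<\infty$ and $G(s)/|s|\to+\infty$, then for each $R>0$ there is $n_R$ with $|s|\le G(|s|)/R$ for $|s|\ge n_R$, so $\int_{\{|f_j|>n_R\}}|f_j|\le \frac{1}{R}\int_{\calO}G(|f_j|)\le C/R$ uniformly in $j$, and letting $R\to\infty$ gives $(1)$.

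The heart of the matter is $(1)\Rightarrow(3)$. Writing $a_n:=\sup_j\int_{\{|f_j|>n\}}|f_j|$, condition $(1)$ says $a_n\to0$, so I can select integers $0=c_0<c_1<c_2<\cdots$ with $a_{c_n}\le 2^{-n}$. Setting $g(t):=\#\{n\ge1:\ c_n\le t\}$, the function $g$ is nondecreasing, integer-valued and diverges to $+\infty$, so $G(s):=\int_0^{|s|}g(t)\,dt$ is convex, even, vanishes at $0$, and satisfies $G(s)/|s|\to+\infty$. The uniform bound then follows from the layer-cake identity $\int_0^{|f_j(x)|}g=\sum_{n\ge1}(|f_j(x)|-c_n)^+$ together with Fubini: $\int_{\calO}G(|f_j|)=\sum_{n\ge1}\int_{\calO}(|f_j|-c_n)^+\le\sum_{n\ge1}\int_{\{|f_j|>c_n\}}|f_j|\le\sum_{n\ge1}a_{c_n}\le1$.

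The only delicate point, and where I expect all the care to be needed, is that the $G$ just produced is merely piecewise linear whereas the statement asks for $G$ smooth. I would resolve this by replacing the step function $g$ with a continuous, strictly increasing $C^\infty$ function still diverging to $+\infty$ and dominated by $g+1$ (for instance by convolving $g$ with a compactly supported even nonnegative kernel and adding a vanishingly small linear term to guarantee strict monotonicity). Convexity of $G$ is preserved since it only requires $g$ nondecreasing, the superlinear growth persists because $g(t)\to+\infty$, and the uniform integral bound survives up to the harmless additive contribution $\sup_j\int_{\calO}|f_j|\le M$ generated by the $+1$. This regularisation is routine, and the remaining content of the proof is the elementary bookkeeping above.
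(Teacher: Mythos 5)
Your proof of the core implication $(1)\Rightarrow(3)$ is essentially the paper's: both arguments select an increasing sequence of thresholds along which the tail integrals $\sup_j\int_{\{|f_j|>\cdot\}}|f_j|$ are summable like $2^{-n}$, build from them a piecewise-linear convex superlinear $G$, verify the uniform bound by rearranging the integral, and smooth at the end by convolution. Your layer-cake identity $\int_0^{|f_j|}g=\sum_{n\ge1}(|f_j|-c_n)^+$ plus Fubini is a slightly tidier bookkeeping than the paper's slab-by-slab estimate $G(s)\le(i+1)s$ on $[N_i,N_{i+1})$ (whose sum produces the extra harmless term $N_1|\calO|$), and you additionally write out $(1)\Leftrightarrow(2)$ and $(3)\Rightarrow(1)$, which the paper deliberately omits as standard; those parts of your argument are correct.

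One concrete caution on the smoothing step, which is the only place where what you wrote would fail if taken literally. The parenthetical device ``adding a vanishingly small linear term to guarantee strict monotonicity'' is both unnecessary and harmful: adding $\eps t$ to $g$ adds $\eps s^2/2$ to $G$, and nothing in the hypotheses controls $\sup_j\int_{\calO}|f_j|^2$, so the uniform bound $\sup_j\int G(|f_j|)<\infty$ would be destroyed --- indeed your own accounting covers only the ``$+1$'' in the domination $\tilde g\le g+1$, not the linear term, and the domination itself is incompatible with an added $\eps t$ for large $t$. The fix is simply to drop it: the statement asks for a convex smooth $G$, not a strictly convex one, so no strict monotonicity of $G'$ is needed. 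Mollifying the nondecreasing step function $g$ with an even kernel supported in $[-1/2,1/2]$ already yields a smooth nondecreasing $\tilde g$ with $g(t-1/2)\le\tilde g(t)\le g(t+1/2)\le g(t)+1$ (your $c_n$ are strictly increasing integers, so any interval of length one contains at most one jump), with $\tilde g\equiv0$ near $0$ since $c_1\ge1$; hence $\tilde G(s)=\int_0^{|s|}\tilde g$ is smooth, even, convex, superlinear, and satisfies $\sup_j\int\tilde G(|f_j|)\le 1+M$. With the parenthetical deleted (or, if you insist on strictness, replaced by a bounded increasing smooth perturbation such as $\eps\arctan t$, whose contribution to $G$ is at most linear and thus controlled by $M$), your proof is complete and matches the paper's.
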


\begin{proof}
We recall how to prove 
 the implication $ 1. \Rightarrow 3. $, which is the most difficult one, to prepare the sequel. Assumption $ 1. $ implies the existence of an increasing sequence $ N_i $ with $ i \in \mathbb{N} $ such that $ N_0 = 0 $ and such that for any $ i \geq 1 $, 

\begin{equation} \label{ff}
\sup_{j \in \N} \int_{|f_j|> N_i } |f_j| < \frac{1}{2^i}.
\end{equation} 

We consider  the unique even function $G$  such that for any $ i \geq 1 $,
\begin{equation*}
G(s) = i N_i  + \frac{s-N_i}{N_{i+1}-N_{i}}((i+1)N_{i+1}-iN_{i}) \quad \text{ for } s \in [N_i,N_{i+1}).
\end{equation*}

Note that the above function is increasing, convex and superlinear. Moreover
\begin{align*}
\sup_{j \in \mathbb{N}} \int G(|f_j|) = \, & \sup_{j}\sum_{i}\int_{N_{i} \leq | f_{j}| < N_{i+1} } G(|f_j|) \leq  \sup_{j}\sum_{i}\int_{N_{i} \leq | f_{j}| < N_{i+1} } (i+1) |f_j| ,
\end{align*}
since $G(s) \leq (i+1) s$ for $ s $ in $[N_i,N_{i+1})$.
Thus, by \eqref{ff}, we arrive at 
\begin{align*}
\sup_{j \in \mathbb{N}} \int G(|f_j|) \leq & \, \sup_{j}\left(N_1 |\mathcal{O}| + \sum_{i}  \frac{i+1 }{2^i}\right)  < + \infty.
\end{align*}
To conclude it is sufficient to regularize 
 $ G $ by a suitable convolution process. 

\end{proof}

The following corollary will be useful to deal with the incoming vorticity.

\begin{Corollary}
\label{Cor:DLVPoussin}
Let $ \mathcal{O} \subset \bbR^n $ bounded and measurable and let $ f_j, h_j: \mathcal{O} \to \bbR $,  with $ j \in \mathbb{N} $, two sequences of measurable functions such that $ h_j  > 0 $, the sequence $ h_j $ is uniformly integrable and
\begin{equation}
\label{Hip:Ain} 
\lim_{\delta \to 0 } \sup_{j \in \mathbb{N}} \mu\{ h_j \leq \delta \} = 0. 
\end{equation}
Then the following assertions are equivalent:
\begin{enumerate}[i.]

\item The sequence $ f_j h_j $ is uniformly integrable.     

\item There exists  an even convex function $ G: \bbR \to \bbR^{+} $ such that
\begin{equation*}
\lim_{|s| \longrightarrow + \infty} \frac{G(s)}{|s|} = + \infty \quad \text{ and } \quad \sup_{i}\int_{\mathcal{O}}G(f_j)h_j < + \infty. 
\end{equation*}  
\end{enumerate}

\end{Corollary}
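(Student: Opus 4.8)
The plan is to prove the two implications separately, the delicate one being $i.\Rightarrow ii.$, which amounts to a weighted version of the construction performed in the proof of Lemma \ref{Delavalle:ain}.

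For the easy direction $ii.\Rightarrow i.$, I would verify the second characterization of uniform integrability from Lemma \ref{Delavalle:ain}, applied now to the sequence $f_j h_j$. Fixing $\varepsilon>0$, a measurable set $A\subset\mathcal{O}$ and a threshold $K>0$, I would split $\int_A |f_j| h_j$ into the contributions of $\{|f_j|\le K\}$ and $\{|f_j|>K\}$. On the first region one bounds $\int_{A\cap\{|f_j|\le K\}}|f_j|h_j\le K\int_A h_j$, which is small once $\mu(A)$ is small, because the sequence $h_j$ is itself uniformly integrable. On the second region one uses the superlinearity of $G$: setting $c_K:=\inf_{|s|>K}G(s)/|s|$, which tends to $+\infty$ as $K\to+\infty$, gives $\int_{\{|f_j|>K\}}|f_j|h_j\le c_K^{-1}\int_{\mathcal{O}}G(f_j)h_j\le c_K^{-1}M$ with $M:=\sup_j\int G(f_j)h_j<+\infty$. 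Choosing first $K$ so that $M/c_K<\varepsilon/2$ and then $\delta$ so that $\mu(A)<\delta$ forces $K\int_A h_j<\varepsilon/2$ yields the claim. Note that this direction uses only the uniform integrability of $h_j$, neither the positivity constraint nor \eqref{Hip:Ain}.

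The core of the statement is the converse $i.\Rightarrow ii.$. The key step I would isolate is the weighted uniform integrability estimate
\[
\lim_{N\to+\infty}\ \sup_{j}\ \int_{\{|f_j|>N\}}|f_j|h_j=0. \qquad (\star)
\]
Once $(\star)$ is available, the conclusion follows by repeating \emph{verbatim} the construction of the implication $1.\Rightarrow 3.$ in Lemma \ref{Delavalle:ain}: one picks an increasing sequence $N_i$ with $N_0=0$ and $\sup_j\int_{\{|f_j|>N_i\}}|f_j|h_j<2^{-i}$ for $i\ge1$, defines $G$ as the even piecewise-affine interpolant determined by $G(N_i)=iN_i$ (so that $G(s)\le(i+1)s$ on $[N_i,N_{i+1})$), and bounds $\sup_j\int G(f_j)h_j$ by $N_1\sup_j\int h_j+\sum_{i\ge1}(i+1)2^{-i}<+\infty$, where $\sup_j\int h_j<+\infty$ because $h_j$ is bounded in $L^1$. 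A final convolution regularizes $G$ into a smooth even convex superlinear function, as at the end of the proof of Lemma \ref{Delavalle:ain}.

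Establishing $(\star)$ is where all three hypotheses enter and is, I expect, the main obstacle, since the product structure prevents a direct control of $|f_j|h_j$ on the region where $f_j$ is large but $h_j$ is small. I would fix $\varepsilon>0$ and split $\{|f_j|>N\}$ according to the size of the weight, writing it as $\{|f_j|>N,\ h_j>\delta\}\cup\{|f_j|>N,\ h_j\le\delta\}$. On the first piece the pointwise inequality $|f_j|h_j>N\delta$ gives the inclusion $\{|f_j|>N,\ h_j>\delta\}\subset\{|f_j h_j|>N\delta\}$, so its contribution is controlled, uniformly in $j$, by the first characterization of uniform integrability of $f_j h_j$, provided $N\delta$ is large. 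The second, genuinely weighted, piece is contained in $\{h_j\le\delta\}$, whose measure is \emph{uniformly} small for small $\delta$ precisely by hypothesis \eqref{Hip:Ain}; combining this with the second characterization of uniform integrability of $f_j h_j$ makes $\int_{\{h_j\le\delta\}}|f_j|h_j$ uniformly small. The order of the choices is essential: one first selects $\delta$ small enough (using \eqref{Hip:Ain} together with the $L^1$ equi-integrability of $f_j h_j$) to absorb the low-weight region, and only then sends $N\to+\infty$ to handle the high-weight region, which yields $(\star)$ and hence the corollary.
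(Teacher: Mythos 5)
Your proposal is correct, and on the hard direction $i.\Rightarrow ii.$ it coincides with the paper's proof: the same decomposition $\{|f_j|>N\}\subset\{|f_jh_j|>N\delta\}\cup\{h_j\le\delta\}$, with the low-weight region absorbed first via \eqref{Hip:Ain} combined with the equi-absolute-continuity (definition 2) of the integrals of $f_jh_j$, then $N\to+\infty$ via definition 1, followed by the weighted rerun of the $1.\Rightarrow 3.$ construction of Lemma \ref{Delavalle:ain}; you even make explicit the two points the paper leaves implicit, namely that the weighted sum is bounded by $N_1\sup_j\int h_j+\sum_i (i+1)2^{-i}$ and that $\sup_j\int h_j<+\infty$ follows from uniform integrability on the bounded set $\mathcal{O}$. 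Where you genuinely depart from the paper is the converse $ii.\Rightarrow i.$: the paper argues by contradiction, extracting sets $A_\delta$ with $\int_{A_\delta}|f_jh_j|>\eps$ and applying Jensen's inequality to the probability measure obtained by normalizing $h_j\,dx$ on $A_\delta$, then letting the superlinearity of $G$ blow up the left-hand side of \eqref{last:ain}; you instead give a direct truncation argument, splitting $\int_A|f_j|h_j$ at level $K$ and using $c_K=\inf_{|s|>K}G(s)/|s|\to+\infty$ together with the uniform integrability of $h_j$ on the low part. Your route is more elementary (no Jensen, no contradiction), is quantitative in $(\eps,K,\delta)$, and cleanly isolates that this direction uses only the uniform integrability of $h_j$ and not \eqref{Hip:Ain} --- an observation consistent with, but not stated in, the paper; the paper's Jensen argument is shorter to write but delivers the same conclusion under the same hypotheses.
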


\begin{proof}

\item
\paragraph*{$  i. \Rightarrow ii. $} By  \eqref{Hip:Ain} and assumption $ i. $, we have
\begin{equation}\label{arg1}
\sup_{j \in \N} \int_{\{ h_j \leq \delta \}} | f_j h_j | < \eps,
\end{equation}
where we use the definition 2. of uniform integrability from Lemma \ref{Delavalle:ain}.

Again from assumption $ i. $, we have 
\begin{equation}\label{arg2}
\lim_{n \to +\infty} \sup_{j} \int_{\{ |f_j h_j| > \delta n\}} |f_j h_j| = 0.
\end{equation}
Since 
\begin{equation*}
\left\{ f_j > n \right\}
 \subset \left\{ f_j h_j  > \delta n \right\} \cup  \left\{ h_j \leq \delta \right\} ,
\end{equation*}
it follows from \eqref{arg1} and \eqref{arg2} that 
\begin{equation*}
\lim_{n \to +\infty} \sup_{j \in \N} \int_{\{ |f_j| > n\}} |f_j h_j| = 0 .
\end{equation*}
Now to conclude the proof of Corollary 
\ref{Cor:DLVPoussin} it is sufficient to mimick 
the implication $ 1. \Rightarrow 3. $ of the previous proof of De la Vall\'ee Poussin's lemma.

\paragraph*{$  ii. \Rightarrow i. $}  Suppose by contradiction that $ i. $ is false. From definition 2. of uniform integrability from Lemma \ref{Delavalle:ain}, there exists $ \eps > 0 $ such that for any $ \delta > 0 $ there exists $ A_{\delta} \subset \mathcal{O} $ with $ \mu(A_{\delta}) < \delta $ and $ j_{\delta} = j   \in \mathbb{N} $ for which 
$$ \int_{A_{\delta}} | f_j h_j | > \eps. $$
Now from assumption $ ii. $ there exists an 
  increasing and convex function $ G $ for which we have, by the Jensen inequality, 
\begin{equation*}
G\left(\frac{\eps }{\int_{A_{\delta}}{|h_j |}} \right) \leq  G\left(\frac{\int_{A_{\delta}}{|f_j h_j |}}{\int_{A_{\delta}}{|h_j |}} \right)\leq \frac{\int_{A_{\delta}}{G(|f_j|) h_j }}{\int_{A_{\delta}}{|h_j |}},
\end{equation*}
and therefore 
\begin{equation}
\label{last:ain}
\frac{G\left(\frac{\eps }{\int_{A_{\delta}}{|h_j |}} \right)}{\frac{\eps }{\int_{A_{\delta}}{|h_j |}}} \leq \frac{C}{\eps}.
\end{equation}
with $C = \sup_{i}\int_{A_{\delta} }G(f_j)h_j < + \infty$. 
Note that $ \eps $ is fixed and from uniform integrability of the sequence $ h_j $, we have $ \int_{A_{\delta}}{|h_j |} \to 0 $ as $ \delta \to 0 $. 
Thus from assumption $ ii. $ the left hand side of \eqref{last:ain} tends to $ + \infty $ which is in contradiction to the fact that the right hand side is bounded.
\end{proof}

Let us also recall the Dunford-Pettis theorem which links the uniform integrability to the weak compactness in $L^1$, that is, the Dunford-Pettis theorem asserts that 
a subset of $L^1$ is  
weakly relatively compact if and only if it is uniformly integrable.

\begin{Remark} Let us stress that it follows from Egoroff's theorem and  the Dunford-Pettis theorem  that a sequence  $ (h_{j})_j $  such that $ h_j \to h $ in $ L^1(\mathcal{O}) $ with  $ h > 0 $ 
 satisfies the assumptions required in 
 the above corollary.
\end{Remark}

Moreover to address the issue of compactness in  $C([0,T]; L^1 - w)$ 
we recall the following variant of the Arzel\`a-Ascoli theorem,  see \cite[Theorem 1.3.2]{Vrabie}. 
\begin{Theorem}
\label{th-ascoli}
Let $X$ be a real Banach space.
Let $C([0,T]; X  - w)$ the locally convex topological vector space of the functions continuous on $[0,T]$ with values in $X$ endowed with its weak topology. 
Let $K$ a subset of $C([0,T]; X  - w)$. 
Then $K$ is relatively  sequentially compact in $C([0,T]; X  - w)$ 
 if and only if the two following assertions hold true: 
\begin{itemize}
\item $K$ is weakly equicontinuous on $[0,T]$, 
\item there exists a dense subset $D$ in $[0,T]$ such that for any $t$ in $D$, 
$K(t) := \{ f(t) / \quad f \in K \}$ is weakly relatively compact in $X$. 
\end{itemize}
\end{Theorem}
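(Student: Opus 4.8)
The plan is to prove both implications of this weak Arzel\`a--Ascoli criterion, treating the nontrivial ``if'' direction by a diagonal extraction combined with the weak equicontinuity, and the easier ``only if'' direction by continuity of the evaluation maps together with a contradiction argument. Throughout I would lean on the Eberlein--\v{S}mulian theorem, which identifies weak relative compactness with weak sequential relative compactness, so that the weak compactness hypotheses can be exploited along sequences.

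For the ``only if'' direction, suppose $K$ is relatively sequentially compact in $C([0,T];X-w)$. For fixed $t$ the evaluation $f\mapsto f(t)$ sends a sequence converging in the space to a weakly convergent sequence of $X$, so it maps $K$ onto a weakly sequentially relatively compact subset of $X$; by Eberlein--\v{S}mulian this is weak relative compactness of $K(t)$, for every $t$ and hence for a dense $D$. Weak equicontinuity I would get by contradiction: if it failed there would exist $\ell\in X^*$, $\eps>0$, functions $f_n\in K$ and times $s_n,t_n$ with $|s_n-t_n|\to0$ yet $|\ell(f_n(s_n)-f_n(t_n))|\ge\eps$; extracting a subsequence converging to some $f$ in the space and with $s_n,t_n\to\tau$, the uniform convergence of $\ell\circ f_n$ and the continuity of $\ell\circ f$ at $\tau$ would give a contradiction.

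For the ``if'' direction, fix a sequence $(f_n)\subset K$. Since $[0,T]$ is separable and $D$ is dense, I would first choose a countable $D_0=\{t_1,t_2,\dots\}\subset D$ dense in $[0,T]$. As each $K(t_j)$ is weakly (sequentially) relatively compact, a diagonal extraction yields a subsequence, still denoted $(f_{n_k})$, with $f_{n_k}(t_j)$ converging weakly in $X$ for every $j$. The decisive upgrade is to pass from $D_0$ to uniform control in $t$: fixing $\ell\in X^*$ and $\eps>0$, weak equicontinuity furnishes $\delta$ with $|\ell(f(s)-f(t))|<\eps$ for all $f\in K$ and $|s-t|<\delta$; covering $[0,T]$ by finitely many $\delta$-balls centred at points of $D_0$ and inserting a nearby $t_j$ in a three-term estimate shows that $(\ell\circ f_{n_k})_k$ is uniformly Cauchy on $[0,T]$. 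Hence $\ell\circ f_{n_k}$ converges uniformly to some $\phi_\ell\in C([0,T];\R)$, and $L_t(\ell):=\phi_\ell(t)=\lim_k\ell(f_{n_k}(t))$ defines, for each $t$, a bounded linear functional $L_t$ on $X^*$ (boundedness of $\{f_n(t)\}$ coming from equicontinuity, weak boundedness at the points of $D_0$, and the uniform boundedness principle); thus $t\mapsto L_t$ is a weak$*$-continuous map into $X^{**}$.

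The main obstacle is the final step: showing $L_t\in X$ for every $t$, so that $f(t):=L_t$ is a genuine weakly continuous $X$-valued function with $f_{n_k}\to f$ in $C([0,T];X-w)$. For $t\in D$ this is immediate, since $f_{n_k}(t)\in K(t)$ has, by Eberlein--\v{S}mulian, a weak limit in $X$, necessarily $L_t$. For $t\notin D$ one only knows that $(f_{n_k}(t))_k$ is \emph{weakly Cauchy}, and a weakly Cauchy sequence need not possess a weak limit in $X$; this is precisely where the structure of $X$ enters. In the case of interest here, $X=L^1(\calF)$, the space is weakly sequentially complete, so every weakly Cauchy sequence converges weakly and $L_t\in X$ follows; the same holds if $X$ is reflexive, or if one strengthens the hypothesis to weak relative compactness of $K(t)$ at \emph{every} $t$ (one then propagates compactness to all $t$ using that each $f\in K$ is weakly continuous, so $f(t)$ is a weak limit of the $f(t_j)$ as $t_j\to t$). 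Once $L_t\in X$ is secured, the uniform convergence $\ell\circ f_{n_k}\to\ell\circ f$ for every $\ell\in X^*$ is exactly convergence in $C([0,T];X-w)$, which concludes the argument.
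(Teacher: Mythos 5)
The paper never proves this statement: it is recalled verbatim from Vrabie's book (the paper cites \cite[Theorem 1.3.2]{Vrabie} just above it), so there is no internal proof to compare with. Your argument is the standard proof of this Arzel\`a--Ascoli variant, and both directions are correct as written: the ``only if'' part via continuity of the evaluation maps, Eberlein--\v{S}mulian, and the contradiction argument for equicontinuity; the ``if'' part via diagonal extraction along a countable dense $D_0\subset D$, upgraded by equicontinuity and a finite $\delta$-cover to uniform Cauchyness of $(\ell\circ f_{n_k})_k$ for each $\ell\in X^*$. (Implicitly you use that the locally convex topology of $C([0,T];X-w)$ is the one generated by the seminorms $f\mapsto\sup_{t}\abs{\ell(f(t))}$, so that convergence in the space is exactly uniform convergence of $\ell\circ f_n$ for every $\ell$; that is indeed the intended topology. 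The parenthetical remark about ``propagating compactness'' in your last paragraph is unnecessary: if $K(t)$ is weakly relatively compact at every $t$, then $(f_{n_k}(t))_k$ has a weak cluster point in $X$, which must coincide with $L_t$, and nothing needs to be propagated.)

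The obstacle you isolate at $t\notin D$ is not a defect of your method but of the statement itself in this generality: for an arbitrary Banach space $X$ the weakly Cauchy sequence $(f_{n_k}(t))_k$ need not converge weakly in $X$, and the ``if'' direction is actually false. Take $X=c_0$, $h_i(t)=\max(0,1-it)$ on $[0,1]$, and $f_n(t)=\sum_{i=1}^{n}h_i(t)e_i$. For $a\in\ell^1$ one has $\abs{\langle a,f_n(t)\rangle-\langle a,f_n(s)\rangle}\leq\sum_{i}\abs{a_i}\min(i\abs{t-s},1)$, which tends to $0$ as $\abs{t-s}\to0$ uniformly in $n$ by dominated convergence, so $K=\{f_n\}$ is weakly equicontinuous; for $t>0$ the sequence $f_n(t)$ is constant for $n\geq 1/t$, so $K(t)$ is norm compact and $D=(0,1]$ is dense; yet any subsequential limit $f$ in $C([0,1];c_0-w)$ would have to satisfy $\langle e_i^{*},f(0)\rangle=1$ for every $i$, i.e.\ $f(0)=(1,1,\dots)\notin c_0$, so $K$ has no convergent subsequence. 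Hence an additional hypothesis such as weak sequential completeness of $X$ (or reflexivity, or weak relative compactness of $K(t)$ for \emph{every} $t$) is genuinely needed, exactly as you impose. For the present paper this is harmless: the theorem is invoked only through Corollary \ref{th-compact-L1}, i.e.\ with $X=L^1(\calF)$, which is weakly sequentially complete, so your proof covers everything the paper actually uses; but as a statement about a general real Banach space, the theorem should be read with that caveat.
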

Observe that Theorem
\ref{th-ascoli} can be applied to some cases where the closed unit ball of $X  - w$ is not metrizable, in particular to the case where $X$ is the space $L^1$. 

%
\begin{Corollary}
\label{th-compact-L1}
A sequence  $(f_n )_n $ in $C([0,T];L^1)$   is relatively sequentially compact in
 $C([0,T]; L^1 - w)$  if and only if the two following assertions hold true: 
\begin{itemize}
\item the sequence $(f_n )_n $ is weakly equicontinuous on $[0,T]$, 
\item there exists a dense subset $D$ in $[0,T]$ such that for any $t$ in $D$, 
$(f_n (t) )_n$  is weakly relatively compact in $L^1$. 
\end{itemize}
\end{Corollary}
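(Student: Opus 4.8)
The plan is to deduce Corollary \ref{th-compact-L1} directly from Theorem \ref{th-ascoli} by specializing to the Banach space $X = L^1$ and choosing $K$ to be the set of terms of the sequence, namely $K := \{ f_n \ ;\ n \in \mathbb{N}\}$. Before invoking the theorem I would first observe that each $f_n$, being strongly continuous from $[0,T]$ into $L^1$, is \emph{a fortiori} weakly continuous, so that $K \subset C([0,T]; L^1 - w)$ and the theorem is applicable. The only potentially delicate point here is that the closed unit ball of $L^1-w$ fails to be metrizable; but this is precisely the situation covered by the remark following Theorem \ref{th-ascoli}, so no extra argument is needed and the full analytic content of the statement is already encapsulated there.

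The second step is to reconcile the formulation of the theorem, which concerns the relative sequential compactness of the \emph{set} $K$, with that of the corollary, which concerns the relative sequential compactness of the \emph{sequence} $(f_n)_n$. These two notions coincide: any sequence extracted from $K$ either takes some value infinitely often, in which case it admits a constant, hence convergent, subsequence, or it admits a subsequence which is a genuine subsequence of $(f_n)_n$, whose convergence is governed by the relative sequential compactness of the sequence. Consequently $(f_n)_n$ is relatively sequentially compact in $C([0,T];L^1-w)$ if and only if $K$ is.

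The final step is to match the two conditions. Weak equicontinuity of the sequence $(f_n)_n$ on $[0,T]$ is, by definition, nothing but weak equicontinuity of the set $K$; and for each fixed $t$ one has $\{ f_n(t)\ ;\ n \in \mathbb{N} \} = K(t)$, so weak relative compactness of $(f_n(t))_n$ in $L^1$ is identical to weak relative compactness of $K(t)$. With these identifications in place, the ``only if'' and ``if'' parts of the corollary follow respectively from the ``only if'' and ``if'' parts of Theorem \ref{th-ascoli}, the dense subset $D$ being furnished directly by the theorem.

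I do not anticipate any genuine obstacle. The substantive mathematics, namely the weak Arzel\`a--Ascoli characterization together with its validity for the non-metrizable weak topology of $L^1$, is entirely contained in Theorem \ref{th-ascoli} and the accompanying remark, both of which I may assume. The proof therefore reduces to the two bookkeeping identifications above (set versus sequence, and the translation of each hypothesis into the language of $K$), which are the only points requiring a word of justification.
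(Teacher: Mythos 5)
Your proposal is correct and coincides with the paper's intended argument: the corollary is stated there as an immediate specialization of Theorem \ref{th-ascoli} to $X=L^1$ with $K=\{f_n\ ;\ n\in\mathbb{N}\}$, the non-metrizability issue being dispatched by the remark following the theorem exactly as you say. Your two bookkeeping points (strong continuity implies weak continuity, and the equivalence between sequential compactness of the sequence and of its set of values) are the only details the paper leaves implicit, and you justify them correctly.
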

%

\subsection{Existence of compatible data for the viscous model}
\label{sec-compa}

The following result states that the input of the problem  \eqref{vort-ss2} can be approximated by a family of inputs,  for $\nu $ in $(0,1)$, which satisfy the  compatibility conditions for the problems  \eqref{equ:app:ss}. 

\begin{Lemma}
\label{reg-data}
Let $p$ in $ [ 1,+\infty]$ and $ q \in (1,+\infty)$.  
Let  $(\omega^{in} , \omega^{+}  )$ a (\ref{CIV})  in $L^p$.
Let also 
\begin{gather}
\label{hyp:1:DEC}
\text{either } v \in L^{1}_{loc}(\bbR^{+}; W^{1,q}(\calF)) \text{ and (\ref{SSC:v}),}  
\\
\label{hyp:2:DEC}
\text{or } g \text{ in } L^{1}_{loc}(\bbR^{+};W^{1-1/q,q}(\partial \calF)) \text{ and (\ref{SSC:g}).}
\end{gather}
Then there exist some families $ \omega^{in}_{\nu} $, $ \omega_{\nu}^{+} $ and or $ v_{\nu} $ or $ g_{\nu} $ if we assume respectively or \eqref{hyp:1:DEC} or \eqref{hyp:2:DEC}, such that for each  $\nu $ in $(0,1)$, 
 $ \omega^{in}_{\nu} $ is in $ C^{\infty}_{c}(\calF)$, $ \omega_{\nu}^{+} $ is in $C^{\infty}_{c}((0,+\infty) \times \partial \calF)$ and or $ v_{\nu} $ is in $ C^{\infty}(\bbR^+\times \calF ) $ and (\ref{SSC:v}) or
 $ g_{\nu} $ is in $ C^{\infty}((0, +\infty)\times \partial \calF)$ and (\ref{SSC:g}) and such that for $ p < \infty $
\begin{gather} 
    \omega^{in}_{\nu} \longrightarrow \omega^{in}  \quad \text{ in }  L^{p}(\calF) 
    \quad \text{ and } \quad 
    (g_{\nu})^{1/p} \omega^+_{\nu} \cv g^{1/p} \omega^{+}  \text{ in } L^{p}((0,T)\times \partial \mathcal{F}^{+}), \nonumber \\
\text{either } v_{\nu} \longrightarrow v \quad \text{ in } L^{1}_{loc}(\bbR^{+}; W^{1,q}(\calF)) \text{ if we assume \eqref{hyp:1:DEC}},
\label{cv-data} \\
\text{or } g_{\nu} \longrightarrow g \quad \text{ in } L^{1}_{loc}(\bbR^{+};W^{1-1/q,q}(\partial \calF)) \text{ if we assume \eqref{hyp:2:DEC}},  \nonumber
\end{gather}
as $ \nu $ tends to $ 0 $.
Moreover 
\begin{gather}
\label{conv-visc-appr}
\|\omega^{in}_{\nu} \|^2_{L^{2}}+\| g_{\nu}^{1/2}\omega_{\nu}^{+}\|^2_{L^2((0,T)\times \partial \calF^{+})} \leq \frac{C_T}{\sqrt{\nu}} \left( \|\omega^{in}_{\nu} \|_{L^{p}}+\| g_{\nu}^{1/p}\omega_{\nu}^{+}\|_{L^p((0,T)\times \partial \calF^{+})} \right)^2.
\end{gather}
Finally when $ p = \infty $ the above convergence holds for any $ l < p = \infty $.
\end{Lemma}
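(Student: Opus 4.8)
The plan is to obtain every approximate datum by the same two--step recipe --- truncation at a level $M_\nu\to+\infty$ followed by a fine mollification at a scale $\delta_\nu\to0$ --- and to arrange the velocity (or flux) data so that the algebraic constraints \ref{SSC:v}/\ref{SSC:g} are kept exactly. Concretely, for the interior datum I would set $\omega^{in}_\nu:=\chi_\nu\big(\rho_{\delta_\nu}*T_{M_\nu}\omega^{in}\big)$, where $T_M$ is truncation at height $M$, $\rho_{\delta}$ a standard mollifier, and $\chi_\nu$ a cut--off supported in $\calF$ that equals $1$ off a $\delta_\nu$--neighbourhood of $\partial\calF$; this lands in $C^\infty_c(\calF)$. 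The boundary datum $\omega^+_\nu\in C^\infty_c((0,+\infty)\times\partial\calF)$ is produced by the same truncation--mollification on the two--dimensional set $(0,+\infty)\times\partial\calF^+$, with an extra cut--off in time. Since $T_{M_\nu}\to\Id$ by dominated convergence in the relevant (weighted) $L^p$ space and the mollification scale tends to $0$, a diagonal choice of $(M_\nu,\delta_\nu)$ gives $\omega^{in}_\nu\to\omega^{in}$ strongly in $L^p(\calF)$ and $\omega^+_\nu\to\omega^+$ in $L^p(\partial\calF^+,g\,ds)$.

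The only quantitative point is \eqref{conv-visc-appr}, and truncation is exactly what delivers it. On the set $\{|f|\le M\}$ one has $|f|^2\le M^{2-p}|f|^p$, hence in any measure $\mu$ (here $dx$ in the interior and $g_\nu\,ds\,dt$ on the inlet) $\|T_Mf\|_{L^2(\mu)}^2\le M^{2-p}\|T_Mf\|_{L^p(\mu)}^p$ when $p\in[1,2)$. Choosing $M_\nu=\nu^{-1/(2(2-p))}$ and using that $\|\omega^{in}_\nu\|_{L^p}$ and $\|g_\nu^{1/p}\omega^+_\nu\|_{L^p}$ stay bounded (and bounded below), the interior contribution $\|\omega^{in}_\nu\|_{L^2}^2$ and the boundary contribution $\|g_\nu^{1/2}\omega^+_\nu\|_{L^2}^2=\int g_\nu|\omega^+_\nu|^2$ are both $\le C_T\nu^{-1/2}(\cdots)^2$, which is \eqref{conv-visc-appr}; the subsequent mollification is harmless since it preserves the bound $\|\cdot\|_{L^\infty}\le M_\nu$. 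For $p\ge2$ no truncation is needed: \eqref{conv-visc-appr} follows from $L^p(\calF)\hookrightarrow L^2(\calF)$ and from H\"older's inequality in the measure $g_\nu\,ds\,dt$. For $p=\infty$ one simply mollifies a bounded datum, which stays bounded in $L^\infty$ and converges in every $L^l$ with $l<\infty$, giving the final assertion.

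To keep \ref{SSC:g} I would mollify $g$ tangentially along each curve $\partial\calS^i$ only, so that $g_\nu\equiv0$ on $\partial\Omega$ is retained and, the sign conditions being open on the compact sets $\partial\calF^\pm$, the strict signs of $g_\nu$ survive for $\nu$ small; the single equality $\int_{\partial\calF}g_\nu=0$ is then restored by the sign--preserving rescaling that leaves $g_\nu$ untouched on $\partial\calF^+$ and multiplies it on $\partial\calF^-$ by $c_\nu:=-\big(\int_{\partial\calF^+}g_\nu\big)\big/\big(\int_{\partial\calF^-}g_\nu\big)\to1$. Under \eqref{hyp:1:DEC} the velocity should be \emph{reconstructed} rather than mollified componentwise, so as not to destroy the divergence--free condition: using the decomposition \eqref{u:dec:ss}, I would regularize $g\mapsto g_\nu$ as above, the scalar circulations $\calC_i(t)$ in time, and $\curl v\mapsto(\curl v)_\nu$ by interior mollification, and set $v_\nu:=v_{g_\nu}+\sum_i\calC_{i,\nu}X_i+K_H[(\curl v)_\nu]$. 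By construction $v_\nu$ is smooth, divergence free, has normal trace $g_\nu$ and the prescribed circulations, hence satisfies \ref{SSC:v}; the convergence $v_\nu\to v$ in $L^1_{loc}(\bbR^+;W^{1,q}(\calF))$ follows from elliptic estimates for the Neumann potential $v_{g_\nu}$ and from the Calder\'on--Zygmund bound \eqref{CZ} applied to $K_H[(\curl v)_\nu-\curl v]$.

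The main obstacle is the \emph{weak} boundary convergence $(g_\nu)^{1/p}\omega^+_\nu\cv g^{1/p}\omega^+$ in $L^p((0,T)\times\partial\calF^+)$ in the presence of a degenerate weight: because $g$ is only $W^{1-1/q,q}$ in space it need not be bounded away from $0$, so $\omega^+$ lies only in the weighted $L^p$, one cannot invert $g_\nu^{1/p}$, and strong convergence of the product cannot be expected. For $p=1$ I would secure the weak limit through uniform integrability of the products $g_\nu\omega^+_\nu$: De la Vall\'ee Poussin's lemma (Lemma \ref{Delavalle:ain}) provides a superlinear convex $G$ with $\int G(\omega^+)\,g<+\infty$, this bound is transmitted to $\int G(\omega^+_\nu)\,g_\nu$ by Jensen's inequality through the truncation--mollification, and Corollary \ref{Cor:DLVPoussin}, applied with $f_\nu=\omega^+_\nu$ and the weight $h_\nu=g_\nu$, then yields the uniform integrability of $g_\nu\omega^+_\nu$; the Dunford--Pettis theorem upgrades this to weak $L^1$ relative compactness, and identifying the limit against $L^\infty$ test functions closes the argument. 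The delicate bookkeeping --- checking that the sign--correction $c_\nu$, the time cut--off, and the truncation level $M_\nu$ are mutually compatible without spoiling either this uniform integrability or the $\nu^{-1/2}$ bound --- is where I expect the real work to lie.
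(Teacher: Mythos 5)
Your treatment of the interior datum, of $g_\nu$ (convolution with a positive kernel acting tangentially along each boundary component and in time, which preserves the strict signs and in fact preserves the zero mean exactly, so your rescaling factor $c_\nu$ is unnecessary), your explicit truncation level $M_\nu=\nu^{-1/(2(2-p))}$ for \eqref{conv-visc-appr} (more explicit than the paper, which merely says the approximations are chosen so that \eqref{conv-visc-appr} holds), and your reconstruction of $v_\nu$ through the div--curl system all coincide with the paper's proof: the paper defines $v_\nu$ as the unique solution of the div--curl problem with data $\eta_\nu\star\curl v$, $g_\nu$ and time-mollified circulations, which is exactly your formula via \eqref{u:dec:ss}. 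The divergence is at the step you yourself flag as the main obstacle, and there your proposal has a genuine gap. The paper's key idea, which you miss, is not to approximate $\omega^+$ at all: it approximates the \emph{product} $\mathfrak{f}:=g^{1/p}\omega^{+}$, which lives in the \emph{unweighted} space $L^{p}((0,T)\times\partial\calF)$, by $\mathfrak{f}_\nu\in C^{\infty}_{c}$ using plain density, and then \emph{defines} $\omega^{+}_{\nu}:=\mathfrak{f}_{\nu}\,g_{\nu}^{-1/p}$. This is smooth and compactly supported because $g_\nu$ is smooth with a strict sign on $\partial\calF^{+}$, and the convergence $g_\nu^{1/p}\omega^+_\nu=\mathfrak{f}_\nu\to g^{1/p}\omega^{+}$ is then \emph{strong} in $L^p$ by construction, for every $p\in[1,\infty)$, with no weighted-measure analysis whatsoever.

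By contrast, your route --- mollify $\omega^{+}$ itself and then fight for the convergence of the product --- is left incomplete precisely in the degenerate regime. First, your claim that ``strong convergence of the product cannot be expected'' is incorrect (and the statement only asks for weak convergence anyway): even along your route, $g^{1/p}T_{M}\omega^{+}\to g^{1/p}\omega^{+}$ in $L^p$ by dominated convergence, $g^{1/p}\bigl(\rho_{\delta}\star T_{M}\omega^{+}\bigr)\to g^{1/p}T_{M}\omega^{+}$ with dominating function $2Mg^{1/p}\in L^p$, and replacing $g$ by $g_\nu$ costs at most $M_{\nu}\,\|g_{\nu}^{1/p}-g^{1/p}\|_{L^{p}}$, which a diagonal choice of $(M_\nu,\delta_\nu)$ kills. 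Second, the substitute you offer treats only $p=1$ (nothing is said about the boundary convergence for $1<p<\infty$), and its crucial step --- transmitting $\int G(\omega^{+})\,g<\infty$ to $\sup_{\nu}\int G(\omega^{+}_{\nu})\,g_{\nu}<\infty$ ``by Jensen through the truncation--mollification'' --- does not go through as stated: Jensen yields $\int G\bigl(\rho_{\delta}\star T_{M}\omega^{+}\bigr)\,g_{\nu}\le\int G\bigl(T_{M}\omega^{+}\bigr)\,\bigl(\check{\rho}_{\delta}\star g_{\nu}\bigr)$, and the ratio $\bigl(\check{\rho}_{\delta}\star g_{\nu}\bigr)/g$ is uncontrolled exactly where $g$ is small, which is the degenerate situation you are worried about; the identification of the Dunford--Pettis limit faces the same weighted-versus-unweighted mismatch, since $\omega^{+}$ need not be locally integrable for the unweighted measure. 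The quotient construction $\omega^{+}_{\nu}=\mathfrak{f}_{\nu}g_{\nu}^{-1/p}$ dissolves all of these difficulties at once, and also makes \eqref{conv-visc-appr} on the boundary a statement about the unweighted $L^2$ and $L^p$ norms of $\mathfrak{f}_\nu$, for which your truncation computation then applies verbatim.
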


\begin{proof}

Let first consider the case where we assume \eqref{hyp:2:DEC}.  We use the density of $ C^{\infty}_{c} $ in $ L^p $ for $ p < \infty $ to show the existence of 
\begin{gather*}   
\omega_{\nu}^{in} \to \omega \quad \text{ in } L^{p}(\calF) \quad \text{ and } \quad \frak{f}_{\nu} \to g^{1/p} \omega^{+}  \quad \text{ in } L^{p}((0,T) \times \partial \calF),
\end{gather*}
in particular we choose the approximations such that \eqref{conv-visc-appr} holds. 
Moreover we define $ g_{\nu} $ by convoluting $ g $ with an appropriate positive smoothing kernel of integral $ 1 $ such that $ g_{\nu} $ is (\ref{SSC:g}). We define 
 $\omega_{\nu}^{+} := \mathfrak{f}_{\nu} g_{\nu}^{- 1/p}.$

In the case we assume \eqref{hyp:1:DEC}, we use the same construction as in the previous step with $ g = v\cdot n $. Finally we define $ v_{\nu} $ as the unique smooth solution of 
\begin{equation*}
\begin{cases}
\div v_{\nu} =  0 & \text{ in } \calF,\\
\curl v_{\nu} = \eta_{\nu} \star \curl v & \text{ in } \calF, \\
v_{\nu} \cdot n = g_{\nu} & \text{ in } \partial \calF, \\
\int_{\partial \calS_i} v_{\nu}\cdot \tau = \tilde{\eta}_{\nu}\star \int_{\partial \calS_i} v \cdot \tau  & \text{ for } i \in \calI, 
\end{cases}
\end{equation*}
where $ \eta_{\nu} $ and $ \tilde{\eta}_{\nu}  $ are respectively a spacial and time smoothing convolution kernel.

\end{proof}

\subsection{Existence of smooth solutions to the viscous model}
\label{sec-vcv-exis}

This subsection is devoted to  the existence  of smooth solutions to the 
 the problems  \eqref{vort-ss2}. 
\begin{Lemma}
\label{visc-ex}
Let $ \omega_{\nu}^{in} \in C^{\infty}_{c}(\calF)  $, let $ \omega^{+}_{\nu} \in C^{\infty}_c((0,+\infty) \times \partial \calF) $ and let 
\begin{gather}
\label{hyp:1:DEC:1}
\text{either } v_{\nu} \in C^{\infty}_{c}(\bbR^{+} \times \calF) \text{ and (\ref{SSC:v}),}  \\
\label{hyp:2:DEC:1}
\text{or } g_{\nu} \in  C^{\infty}_{c}(\bbR^+ \times \partial \calF)) \text{ and (\ref{SSC:g}).}
\end{gather}
Then there exists a global unique smooth solution $ \omega_{\nu} $ of the system \eqref{equ:app:ss} associated with the data $ \omega^{in}_{\nu} $, $ \omega_{\nu}^{+} $ and $ v_{\nu} $  if we assume \eqref{hyp:1:DEC:1} and of the system \eqref{equ:app:ss} together with \eqref{u:dec:ss} associated with the data $ \omega^{in}_{\nu} $, $ \omega_{\nu}^{+} $ and $ g_{\nu} $  if we assume \eqref{hyp:2:DEC:1}.
\end{Lemma}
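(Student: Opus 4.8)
The plan is to establish global existence and uniqueness of smooth solutions to the transport-diffusion system \eqref{equ:app:ss} for each fixed $\nu$. Since the boundary condition \eqref{equ:app:ssin2} is a Robin-type (oblique) condition with the penalisation term $(\omega_\nu - \omega_\nu^+) g_\nu \mathds{1}_{\partial \calF^+}$, and the equation is a linear (in case (i)--(ii)) or semilinear (in case (iii)) parabolic equation with smooth, bounded coefficients, the natural route is local existence by a fixed-point argument followed by a global continuation using \textit{a priori} estimates. I would organise the argument around the distinction between the two hypotheses \eqref{hyp:1:DEC:1} and \eqref{hyp:2:DEC:1}.

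In the case \eqref{hyp:1:DEC:1}, where the divergence-free vector field $v_\nu$ is a given smooth datum, equation \eqref{equ:app:ss1} is a genuinely \emph{linear} parabolic equation for $\omega_\nu$ with the smooth transport coefficient $v_\nu$ and fixed diffusion $\nu$. First I would note that the boundary condition \eqref{equ:app:ssin2} is a linear Robin condition of the form $\nu \partial_n \omega_\nu + \psi_\nu \omega_\nu = \psi_\nu \omega_\nu^+$ with $\psi_\nu := -g_\nu \mathds{1}_{\partial \calF^+}\ge 0$ (recall $g_\nu<0$ on $\partial\calF^+$), so the boundary operator is of the right sign for parabolic well-posedness, and the data $\omega_\nu^{in}$, $\omega_\nu^+$ are smooth and compactly supported. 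One then invokes the standard linear parabolic theory (e.g.\ Ladyzhenskaya--Solonnikov--Uralt'seva type estimates in H\"older or Sobolev spaces, together with parabolic bootstrap) to obtain a unique global smooth solution; linearity makes global existence automatic since the coefficients do not depend on the solution and the $L^2$ energy estimate, obtained by multiplying \eqref{equ:app:ss1} by $\omega_\nu$ and integrating by parts using $\div v_\nu = 0$, controls any norm on finite time intervals.

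In the more delicate case \eqref{hyp:2:DEC:1}, where only $g_\nu$ is prescribed and $v_\nu$ is coupled to $\omega_\nu$ through the Biot--Savart-type decomposition \eqref{u:dec:ss}, the system becomes a nonlinear (Navier--Stokes type) problem. Here the plan is a fixed-point scheme: given a candidate vorticity $\tilde\omega$ in a suitable ball of, say, $C([0,T];L^2)\cap L^2(0,T;H^1)$, one reconstructs the velocity $\tilde v = v_{g_\nu} + \sum_i \calC_i(t) X_i + K_H[\tilde\omega]$ via \eqref{vort-ss2-d}--\eqref{vort-ss2-e}, then solves the \emph{linear} problem of the previous case with transport field $\tilde v$ to produce a new $\omega_\nu$. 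The elliptic estimate \eqref{CZ} guarantees $K_H[\tilde\omega]\in W^{1,q}$ and hence enough regularity of $\tilde v$ to close a contraction on a short time interval, giving local existence and uniqueness. I would then upgrade regularity by parabolic bootstrapping (the diffusion instantly smooths $\omega_\nu$, and with $\omega_\nu$ smooth the Biot--Savart law returns a smooth $v_\nu$), and continue globally.

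The main obstacle, and the step deserving the most care, is the \emph{global} \textit{a priori} bound in the coupled case \eqref{hyp:2:DEC:1}, needed to rule out finite-time blow-up and extend the local solution to all of $\bbR^+$. The basic $L^2$ estimate is obtained by testing \eqref{equ:app:ss1} with $\omega_\nu$; the transport term $\int_\calF (v_\nu\cdot\nabla\omega_\nu)\omega_\nu$ integrates by parts to a boundary flux $\tfrac12\int_{\partial\calF} g_\nu \omega_\nu^2$ using $\div v_\nu = 0$, which must be balanced against the diffusion term $\nu\int_\calF |\nabla\omega_\nu|^2$ and the boundary contributions from \eqref{equ:app:ssin2}. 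The delicate point is that the flux through the inlet is not a priori signed in a favourable way, but the penalised boundary condition is designed precisely so that, after substituting \eqref{equ:app:ssin2} and using Young's inequality on the inlet term involving $\omega_\nu^+$, one recovers an energy inequality of the type underlying \eqref{apq} with $q=2$, namely a Gronwall-closable estimate controlling $\|\omega_\nu(t)\|_{L^2(\calF)}^2$ and $\int_0^t\int_{\partial\calF^-}(-g_\nu)|\omega_\nu|^2$ in terms of the data. In two space dimensions this $H^1$-in-space control suffices, via the elliptic estimates \eqref{CZ} and Sobolev embedding, to bound the transport coefficient $v_\nu$ and thereby preclude blow-up; higher regularity then propagates by the standard parabolic smoothing, and uniqueness in each case follows from the linear structure of the difference equation together with the same energy method.
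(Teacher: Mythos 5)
Your proposal follows the same two-track architecture as the paper: the problem under \eqref{hyp:1:DEC:1} is treated as a linear parabolic equation with the dissipative Robin structure $\nu\partial_n\omega_\nu+\psi_\nu\omega_\nu=\psi_\nu\omega_\nu^+$, $\psi_\nu:=-g_\nu\mathds{1}_{\partial\calF^+}\geq 0$, and the coupled problem under \eqref{hyp:2:DEC:1} is solved by a fixed point plus continuation via the \textit{a priori} energy bounds, with uniqueness by the energy method on the difference equation. But the fixed-point device differs genuinely. The paper does not contract: it runs a \emph{Schauder} argument on a ball $\mathcal{Z}$ of the high-regularity space $\bigcap_{i=0}^{n}H^{i}(0,T;H^{2n-2i}(\calF))$, whose self-mapping, compactness and continuity properties are extracted from higher-order energy estimates proved by induction on $n$ (testing \eqref{weak:for:SS:MB} with $\partial_t^n\omega$ and then with $\partial_t\tilde\omega$, and handling the boundary commutator terms $\partial_t^{n-i}(\omega-\omega^+)\partial_t^i g$ explicitly); smoothness of the fixed point is thereby built in from the start, with no bootstrap needed and no compatibility discussion at $t=0$. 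Your route --- Banach contraction at low regularity followed by parabolic smoothing --- is more economical where it closes and gives uniqueness of the scheme for free, whereas the paper trades that for the weaker, compactness-only requirements of Schauder and recovers uniqueness afterwards from \eqref{induc0} and Gr\"onwall, exactly as you also propose. In the linear case your appeal to Ladyzhenskaya--Solonnikov--Uralt'seva is precisely the folklore the paper itself invokes (\cite{KL}, \cite{LSU}) before sketching the induction on top of the Galerkin existence from \cite{Boyer}, so there is no substantive divergence there.

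One step would fail as literally stated: the contraction in $C([0,T];L^2)\cap L^2(0,T;H^1)$. For $w=F(\tilde\omega_1)-F(\tilde\omega_2)$ the forcing is $(\tilde v_1-\tilde v_2)\cdot\nabla\bar\omega_2$, and the $L^2$ energy method yields a Lipschitz factor controlled by $\nu^{-1}\int_0^T\|\nabla\bar\omega_2\|_{L^2(\calF)}^2\,dt$. The energy identity bounds this quantity by $C\nu^{-1}\big(\|\omega_\nu^{in}\|_{L^2(\calF)}^2+\text{data terms}\big)$, which is \emph{not} small as $T\to0$ uniformly over the ball of candidates, since the dissipation can be released arbitrarily fast; moreover, upgrading to $\sup_t\|\nabla\bar\omega_2\|_{L^2}$ so as to gain a factor $T^{1/2}$ requires testing with $\partial_t\bar\omega_2$, which (as in the paper's estimate \eqref{induc1}) consumes $\|\tilde v_2\|_{L^\infty}$ --- unavailable when the candidate $\tilde\omega_2$ is merely $L^2$, since \eqref{CZ} then gives only $\tilde v_2\in H^1\not\subset L^\infty$ in two dimensions. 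The standard repair is to contract one rung higher, e.g.\ in $C([0,T];H^1)\cap L^2(0,T;H^2)$, where $\|\nabla\bar\omega_2\|_{L^2((0,T)\times\calF)}\leq T^{1/2}\sup_t\|\bar\omega_2(t)\|_{H^1(\calF)}$ supplies the smallness and the candidates generate Lipschitz velocities; with the data smooth and compactly supported (so that all compatibility conditions at $t=0$ hold trivially), the linear solve lands in this space, and your bootstrap-and-continuation scheme then goes through, the global extension resting, exactly as in the paper, on the \textit{a priori} estimates and a maximal-time contradiction argument.
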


This result is part of the mathematical folklore on boundary values problem for parabolic equation, see for instance \cite[Chapters 8 and 10]{KL} and \cite[Chapter 4 and 7]{LSU}. 
However, since the boundary conditions are quite unusual and for sake of clarity, we give a sketch of proof. 

\begin{proof}

Let us start by dealing with the case where the hypothesis \eqref{hyp:1:DEC:1} holds true. Existence of weak solutions was shown in \cite{Boyer} via a Galerkin method. It is then enough to show \textit{a priori} estimates for higher derivatives. Without loss of generality we assume that $ \nu =1 $ and we do not write the index of the approximation.
We prove by induction on $ n \in \N$ that 
\begin{equation}
  \label{induc}
 \omega \in \bigcap_{i = 0}^n H^{i}(0,T;H^{2n-2i}(\calF)) .
\end{equation}
 This implies that $ \omega $ is smooth. The case $n=0$ was proved in  \cite{Boyer}, it relies on an energy estimate, which is obtained by testing \eqref{weak:for:SS:MB} with the solution $ \omega $ and by some integration by parts, so that we arrive at 
\begin{align}
  \label{induc0}
\frac{1}{2} \int_{\calF}|\omega|^2(t,.) +  \int_{0}^{t}\int_{\calF^{-}} g\frac{|\omega|^2}{2} + \int_{0}^{t}\int_{\calF} |\nabla \omega |^2 \leq \int_{\calF} |\omega^{in}|^2  + \int_{0}^{t} \int_{\partial \calF^{+}}
(-g) \frac{|\omega^+|^2}{2}.
\end{align}
   Before to move on the iteration step, we first tackle the case where $ n= 1 $ to display the method. 

\paragraph*{Proof of \eqref{induc}  in the case where $ n= 1 $.}
 We test the weak formulation \eqref{weak:for:SS:MB} with $ \partial_t \omega $ and after some integrations by parts, we have
\begin{align}
  \label{induc1}
\frac{1}{2}\int_{\calF}| \nabla \omega|^2&(t,.) +  \frac{1}{4}\int_{0}^{t} \int_{\calF} |\partial_t \omega|^2 + \frac{1}{8}\int_{0}^{t}\int_{\calF} |\Delta \omega|^2 \leq \frac{3}{4} \int _0^t\int_{\calF}\left| v\cdot \nabla \omega \right|^2 + \int_{0}^t \int_{ \partial \calF_{+}} (\omega-\omega^{+})g\partial_{t}\omega  .
\end{align}
This leads to
\begin{align*}
\frac{1}{2}\int_{\calF}|\nabla \omega |^2 (t,.) + & \frac{1}{4}\int_{0}^{t} \int_{\calF} |\partial_t \omega|^2 + \frac{1}{8}\int_{0}^{t}\int_{\calF} |\Delta \omega|^2 + \frac{1}{2}\int_{\partial \calF^{+}} (-g)|\omega- \omega^{+}|^2(t,.) \\
\leq & \, \frac{3}{4} \int_0^t \left(\|v\|_{L^{\infty}(\calF)}\int_{\calF}|\nabla \omega|^2\right) - \int_0^t \int_{\partial \calF^{+}}g|\omega^{in}-\omega^{+}|^2 \\ & \, + \int_0^t \int_{\partial \calF^{+}} \frac{|\partial_t g|^2}{2|g|} + \frac{1}{2} \int_0^t \int_{\partial \calF^{+}} |g|\partial_t \omega^{+}|^2 + \frac{1}{2}\int_{\calF} |\nabla \omega^{in}|^2.
\end{align*}
All the terms in the right hand side above only depend on the data, except the first one, which can be handled by a Gronwall lemma. 
Using the classical elliptic estimate: 
%
\begin{equation*}
\int_{\calF} |\nabla^2 \omega|^2 \leq C\int_{\calF} \big( |\Delta \omega |^2  + |\nabla \omega |^2 + |\omega |^2 \big),
\end{equation*}
%
%
we deduce  that   \eqref{induc} holds true for  $ n= 1 $.

\paragraph*{Inductive step.} We suppose that  \eqref{induc}  holds true up to some order $n$ and we are going to prove that it also holds true at the order $n+1$. 
To do that we apply $ \partial_{t}^n $ to  \eqref{weak:for:SS:MB}. 
Then $ \tilde{\omega} := \partial_t \omega^n $ satisfies
\begin{align}
\label{preums}
\partial_t \tilde{\omega} + v\cdot \nabla \tilde{\omega} = &\,  \nu \Delta \tilde{\omega} 
+  \, \sum_{i = 1}^{n} \partial_t^i v\cdot \nabla \partial_t^{n-i}\omega  ,\quad && \text{ in  } \bbR^{+} \times \calF,  \\
\partial_n \tilde{\omega} = & \, (\tilde{\omega}-\tilde{\omega}^+)g + \sum_{i = 1}^{n}\partial_t^{n-i}(\omega-\omega^{+})\partial_t^i g ,\quad && \text{ in } \bbR^{+}\times \partial \calF.
\end{align}
For this system we have 
an energy estimate similar to \eqref{induc0}, obtained by multiplying \eqref{preums} by $ \tilde{\omega}$,  except that two new terms appear due to the last terms in the right hand side above. Proceeding as before, we have only to estimates the two new terms
\begin{gather*} 
I = \int_{0}^{t} \int_{\calF} \sum_{i = 1}^{n} \partial_t^i v\cdot \nabla ( \partial_t^{n-i}\omega) \tilde{\omega} \quad\text{ and } 
 \quad II = \int_{0}^{t} \int_{\partial \calF^+ } \sum_{i = 1}^{n}\partial_t^{n-i}(\omega-\omega^{+})(\partial_t^i g) \tilde{\omega}.
\end{gather*}
For the first term, it holds
\begin{align*}
|I| \leq & \, \int_0^{t}\int_{\calF} |\tilde{\omega}|^2 + \sum_{i=1}^{n}\int_0^{t}\int_{\calF} |\partial_t^i v \cdot \nabla \partial_t^{n-1} \omega|^2 \\
\leq & \,  \int_0^{t}\int_{\calF} |\tilde{\omega}|^2 + \sum_{i=1}^{n}\int_0^{t}\|\partial_t^i v \|_{L^{\infty}(\calF)}^2\int_{\calF} | \nabla \partial_t^{n-1} \omega|^2 \\
\leq & \, \int_0^{t}\int_{\calF} |\tilde{\omega}|^2 + C,
\end{align*}
where $ C $ depends from the data, thanks to the previous steps. The term $II$ can be handled 
similarly. 
%

Let now  multiply  the equation  \eqref{preums} by  $ \partial_t \tilde{\omega} $ and integrate over $(0,t) \times \calF$. This provides an estimate similar to   \eqref{induc1} with two 
extra  terms  due to the last terms in the right hand side of \eqref{preums}. These two terms are: 
\begin{gather*} 
I^{im} = \int_{0}^{t} \int_{\calF} \sum_{i = 1}^{n} \partial_t^i v\cdot \nabla \partial_t^{n-i}\omega \partial_t \tilde{\omega} \quad\text{ and }  \quad II^{im} = \int_{0}^{t} \int_{\partial \calF^+ } \sum_{i = 1}^{n}\partial_t^{n-i}(\omega-\omega^{+})\partial_t^i g \partial_t \tilde{\omega}.
\end{gather*}
As before we have
\begin{align*}
|I^{im}| \leq & \, \frac{1}{8} \int_0^{t}\int_{\calF} |\partial_t \tilde{\omega}|^2 + C.
\end{align*}
The second one is more technical. We rewrite
\begin{align*}
II^{im} = & \, \sum_{i = 1}^{n} \int_{\partial \calF^+ } \partial_t^{n-i}(\omega(t,.)-\omega^{+}(t,.))\partial_t^i g(t,.) \tilde{\omega}(t,.)- \sum_{i = 1}^{n}  \int_{\partial \calF^+ } \partial_t^{n-i}(\omega^{in}-\omega^{+}(0,.))\partial_t^i g(0,.) \tilde{\omega}(0,.) \\
 & \,  - \sum_{i = 1}^{n} \int_{0}^{t} \int_{\partial \calF^+ } \partial_t^{n-i+1}(\omega-\omega^{+})\partial_t^i g \tilde{\omega} - \sum_{i = 1}^{n} \int_{0}^{t} \int_{\partial \calF^+ } \partial_t^{n-i}(\omega-\omega^{+})\partial_t^{i+1} g \tilde{\omega} \\
= & \, \sum_{i = 1}^{n}\int_{\partial \calF^+ } \partial_t^{n-i} (\omega(t,.)-\omega^{+}(t,.))\partial_t^i g(t,.) \tilde{\omega}(t,.)- \sum_{i = 1}^{n}  \int_{\partial \calF^+ } \partial_t^{n-i}(\omega^{in}-\omega^{+}(0,.))\partial_t^i g(0,.) \tilde{\omega}(0,.) \\
 & \,  - \int_{0}^{t} \int_{\partial \calF^+ } \partial_t^{n}(\omega-\omega^{+})\partial_t^i g \tilde{\omega} - \sum_{i = 2}^{n} \int_{0}^{t} \int_{\partial \calF^+ } \partial_t^{n-i+1}(\omega-\omega^{+})\partial_t^i g \tilde{\omega} \\ 
 & \, - \sum_{i = 1}^{n} \int_{0}^{t} \int_{\partial \calF^+ } \partial_t^{n-i}(\omega-\omega^{+})\partial_t^{i+1} g \tilde{\omega}  \\
=  & \, \sum_{i = 1}^{n} \int_{\partial \calF^+ } \partial_t^{n-i}(\omega(t,.)-\omega^{+}(t,.))\partial_t^i g(t,.) \tilde{\omega}(t,.)- \sum_{i = 1}^{n}  \int_{\partial \calF^+ } \partial_t^{n-i}(\omega^{in}-\omega^{+}(0,.))\partial_t^i g(0,.) \tilde{\omega}(0,.) \\
 & \,  - \int_{0}^{t} \int_{\partial \calF^+ } (\tilde{\omega}-\tilde{\omega}^{+})\partial_t g \tilde{\omega} - \sum_{i = 2}^{n} \int_{0}^{t} \int_{\partial \calF^+ } \partial_t^{n-i+1}(\omega-\omega^{+})\partial_t^i g \tilde{\omega} \\ 
 & \, - \sum_{i = 1}^{n} \int_{0}^{t} \int_{\partial \calF^+ } \partial_t^{n-i}(\omega-\omega^{+})\partial_t^{i+1} g \tilde{\omega}.
\end{align*} 
Using the above equality we deduce that
\begin{align*}
| II^{im}| \leq & \, \frac{1}{4}\|\tilde{\omega}\|^2_{H^1(\calF)} + \frac{1}{4} \| \tilde{\omega}^{in}\|_{H^1(\calF)}^2 + \sum_{i=1}^{n}\|\partial_t^{i}g\|_{C^0}\|\partial_t^{n-i}(\omega-\omega^{+})\|_{C^{0}(0,T;H^{1}(\calF))}^2 \\
& \, +  \|\partial_t g \|_{C^0(0,t)}  \int_0^{t}\|\tilde{\omega}\|_{H^{1}(\calF)}^2 + \frac{3}{2}   \int_0^{t}\|\tilde{\omega}\|_{H^{1}(\calF)}^2   + \int_0^t \int_{\partial \calF^+}|\tilde{\omega}^+ \partial_t g |^2  \\ & \, + \sum_{i=2}^{n}\int_{0}^{t} \int_{\partial \calF^+ }   |\partial_t^{n-i+1}(\omega-\omega^{+})\partial_t^i g|^2 +  \sum_{i=2}^{n}\int_{0}^{t} \int_{\partial \calF^+ }   |\partial_t^{n-i}(\omega-\omega^{+})\partial_t^{i+1} g|^2.
\end{align*}
The first term of the right hand side can be absorbed by the left hand side, the first two terms of the second line are tackled by the Gr\"onwall argument,  and all the remaining ones are bounded.

We deduce that $ \tilde{\omega} = \partial_t^n \omega $ is \textit{a priori} bounded in $ H^{1}(0,T;L^{2}(\calF)) \cap L^{2}(0,T;H^2(\calF)) $. The desired regularity follows by using the equation \eqref{weak:for:SS:MB}.   This allows to conclude  that \eqref{induc} is true for every $ n $ in $\N$. This achieves  the proof of Lemma 
\ref{visc-ex} in  the case where the hypothesis \eqref{hyp:1:DEC:1} holds true.
 
\bigskip 
 
To prove  existence  in the case where hypothesis \eqref{hyp:2:DEC:1} holds, we are going to use  the Schauder fixed point theorem which asserts that if  $\mathcal{B}$ is a nonempty convex closed subset of a 
normed space $\mathcal{X}$ and $F:\mathcal{B}\mapsto\mathcal{B}$ is a continuous mapping 
 such that $ F(\mathcal{B})$ is contained in a compact subset of $ \mathcal{B}$, then
 $F$  has a fixed point.

\paragraph{Definition of an appropriate operator.}
 Let $n$ in $\N$,  $R >0$ and 
\begin{gather*}
\mathcal{Z} = \left\{ \omega \in \bigcap_{i=0}^{n} H^{i}(0,T;H^{2n-2i}(\calF)) \quad \text{ such that } \quad 
\| \omega \|_Z := \| \omega \|_{\bigcap_{i=0}^{n} H^{i}(0,T;H^{2n-2i}(\calF))} \leq R \right\} . 
\end{gather*}
Define the map $ F:\mathcal{Z} \longrightarrow \bigcap_{i=0}^{n} H^{i}(0,T;H^{2n-2i}(\calF)) $ where $ F(\omega) = \bar{\omega} $ is the solution of 
\begin{align*}
\partial_t \bar{\omega} + v_{\omega}\cdot \nabla \bar{\omega}- \Delta \bar{\omega} = & \,  0  \quad && \text{ in  } \bbR^{+} \times \calF, \\
\partial_n \bar{\omega} =  & \, (\bar{\omega}-\omega^+)g \quad && \text{ in } \bbR^{+}\times \partial \calF \\
\text{div } v_{\omega} =  &\, 0  \quad && \text{ in  } \bbR^{+} \times \calF,  \\
\text{curl } v_{\omega} = &\, \omega   \quad && \text{ in  } \bbR^{+} \times \calF,  \\
v_{\omega} \cdot n = & \, g   \quad && \text{ in } \bbR^{+}\times \partial \calF,  \\
\oint_{\partial \calS^{i}} v_{\omega} \cdot  \tau =  &\, \calC_i^{in}+\int_0^t \oint_{\partial \calS^{i}} \omega g    \quad && \text{ for any  }  i \in I_- , \\
\oint_{\partial \calS^{i}} v_{\omega} \cdot  \tau = &\, \calC_i^{in}+\int_0^t \oint_{\partial \calS^{i}} \omega^{+} g   \quad && \text{ for any  }  i \in I_+ .
\end{align*}
For any $\omega$ in $Z$, we have that $ v_{\omega} $ is in $\bigcap_{i=0}^{n}H^i(0,T;H^{2n-2i+1}(\calF))$. 
Then, by using the  \textit{a priori} estimates  above, we observe that the $ \mathcal{Z} $-norm of $ \bar{\omega} $ depends on the $ \bigcap_{i=0}^{n-1}H^i(0,T;H^{2n-2i}(\calF)) $ of $ v_{\omega} $, which converges,  as $ t $ tend to zero, to zero uniformly with respect to $ \omega \in \mathcal{Z} $.
 Therefore for  $ T $ small  enough, we conclude that $ F(\mathcal{Z}) \subset \mathcal{Z} $.

Let us now prove that  $ F $ is relatively compact.  
Let $ (\omega_{j} )_j$ a  bounded sequence in $ \mathcal{Z} $. Then, up to a subsequence, $ \omega_{j} \cv \omega $ in $ \mathcal{Z} $. By Rellich's theorem the convergence is  strong in $$ \bigcap_{i=0}^{n-1} H^{i}(0,T; H^{2n-2i-1 }(\calF)) .$$
 We deduce that the corresponding velocity $ v_{j} = v_{\omega_{j}} $ converges to $v $ in 
$ \bigcap_{i=0}^{n-1} H^{i}(0,T; H^{2n-2i }(\calF)) .$
Moreover for any $j$, the function $ w_j = \bar{\omega}-\bar{\omega}_{j}$ satisfies the system 
\begin{align*}
\partial_{t} w_j + v\cdot \nabla w_j - \Delta w_j = & \, -(v-v_{j})\cdot \nabla \bar{\omega}_{j}  \quad && \text{ in  } \bbR^{+} \times \calF,\\
\partial_n w_j = & \, w_j g   \quad &&  \text{ in } \bbR^{+}\times \partial \calF,
\end{align*} 
with zero initial data. 
We observe that 
\begin{align*}
\left\| (v-v_j)\cdot \nabla \bar{\omega}_j \right\|_{H^{n-1}(0,T;L^2(\calF))} 
\leq & \, \sum_{i=0}^{n-2} \left\| \partial_t^i(v-v_j )\|_{L^{ \infty}(0,T;L^{\infty}(\calF))} \| \partial_{t}^{n-1-i}\nabla \bar{\omega}_{j}\right\|_{L^2(0,T;L^2(\calF))} \\ & \, +\left\| \partial_t^{n-1}(v-v_j )\|_{L^{2}(0,T;L^{\infty}(\calF))} \| \partial_t^{n-1-i}\nabla \bar{\omega}_{j}\right\|_{L^{\infty}(0,T;L^2(\calF))} \\
& \, \longrightarrow 0.
\end{align*}
Proceeding in the same way for higher derivatives, we obtain that 
\begin{gather*}
\sum_{i = 0}^{n-1}\left\| (v-v_j)\cdot \nabla \bar{\omega}_j \right\|_{H^i(0,T;H^{2n-2-2i}(\calF))} \longrightarrow 0.
\end{gather*}
Then by using the \textit{a priori} estimates above, we deduce that $ w_j $ converges to $ 0 $ in $ \mathcal{Z} $. Thus $ F $ is relatively compact. 
The continuity of $F$ can be proved along the same lines. 
Thus Schauder's fixed point theorem can be applied. It implies that $F$ has a fixed point in $\mathcal{Z}$. This has proved the local in time existence of smooth solutions.
Moreover  the existence to all $  [0,T] $  can be deduced from the \textit{a priori} estimates.  In fact if we suppose by contradiction that there exists a maximal time of existence $ t_{*} < T $, the  \textit{a priori}  estimates ensure that $ \omega(t_{*}) $ is enough regular to apply again the local existence result and we obtain a contradiction. Uniqueness follows from the energy estimate \eqref{induc0} and Gr\"onwall's lemma. 
\end{proof}

\subsection{Convergence of the approximations}
\label{sec-vcv-cv}

In this subsection we establish the convergence of the solutions to 
 the problems  \eqref{vort-ss2} in the vanishing viscosity limit.

\begin{Proposition}
\label{visc-appr}
Let $p$ in $[1,+\infty]$. Let $ q  = p $ in the case where $ p > 1 $ and $ q > 2 $ in the case where $ p = 1$. 
Let $ g $ in $ L^{1}_{loc}(\bbR^{+};W^{1-1/q,q}(\calF))$ and (\ref{SSC:g}). 
Let  $(\omega^{in} , \omega^{+}  )$ a (\ref{CIV})  in $L^p$. 
Let some families $ \omega^{in}_{\nu} $, $ \omega_{\nu}^{+} $ and $ g_{\nu} $ for $\nu $ in $(0,1)$, as in 
Lemma \ref{reg-data}. 
Let  $ \omega_{\nu} $ the corresponding global unique smooth solution of the system \eqref{equ:app:ss}-\eqref{u:dec:ss} associated with the data $ \omega^{in}_{\nu} $, $ \omega_{\nu}^{+} $ and $ g_{\nu} $, from Lemma \ref{visc-ex}. 
Then there are a subsequence of $ \omega_{\nu} $  and a subsequence $ v_{\nu} $
which we still denote  $ \omega_{\nu} $ and $ v_{\nu }$  and which satisfy 
\begin{gather} \label{cv-fc}
    \omega_{\nu} \longrightarrow \omega \quad \text{ in } C_{w}([0,T];L^{p}(\calF)) 
    \quad 
    g_{\nu}^{1/p} \omega_{\nu} \cv g^{1/p} \omega^{-}  \text{ in } L^{p}((0,T)\times \partial \mathcal{F}^{-}), \\ \label{cv-fc2}
\text{ and } \quad  \sqrt{\nu}  \nabla \omega_{\nu}   \quad \text{bounded in } L^{2}([0,T];L^2(\calF)) ,\\ \label{cv-fc3}
 v_{\nu}-v_{g_{\nu}} \longrightarrow v -v_{g} \quad \text{ in } C_{w}([0,T];W^{1,p}(\calF)),  
\end{gather}
in the case where  $ p < \infty $ and which satisfy  \eqref{cv-fc} with any real number greater than $1$ instead of $p$ in the case where $p=+\infty$. 
Moreover $v$ and $\omega$ satisfy \eqref{u:dec:ss} and \eqref{vort-ss2-e}.

\end{Proposition}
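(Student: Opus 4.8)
The plan is to derive uniform-in-$\nu$ bounds on the smooth solutions $\omega_\nu$ provided by Lemma \ref{visc-ex}, to extract subsequences converging in the topologies \eqref{cv-fc}--\eqref{cv-fc3}, and then to pass to the limit in the two \emph{linear} relations \eqref{u:dec:ss} and \eqref{vort-ss2-e}; the identification of the limit as a solution of a suitable weak formulation of the transport equation is deferred to the dedicated sections. I would begin with the a priori estimates. Testing the weak formulation \eqref{weak:for:SS:MB} against a renormalization $\beta(\omega_\nu)$ approximating $s\mapsto|s|^p$ (and using the maximum principle \eqref{apinf} when $p=+\infty$) yields, uniformly in $\nu$, a bound for $\omega_\nu$ in $L^\infty(0,T;L^p(\calF))$ together with a bound for the exiting flux $g_\nu^{1/p}\omega_\nu$ in $L^p((0,T)\times\partial\calF^-)$, in the spirit of \eqref{apq}. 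For the diffusion, the $L^2$ energy identity \eqref{induc0} combined with the calibrated choice of approximate data \eqref{conv-visc-appr} controls the dissipation: it provides \eqref{cv-fc2} and, crucially, forces the viscous contribution $\nu\int\!\!\int\nabla\omega_\nu\cdot\nabla\varphi$ to vanish as $\nu\to0$.

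Next I would address the spatial structure of the velocity. Writing $v_\nu$ through the decomposition \eqref{u:dec:ss}, that is $v_\nu=v_{g_\nu}+\sum_{i}\calC_{i,\nu}(t)X_i+K_{H}[\omega_\nu]$, the hydrodynamic Biot--Savart estimate \eqref{CZ} bounds $K_{H}[\omega_\nu]$ in $L^\infty(0,T;W^{1,p}(\calF))$ when $1<p<\infty$ (and the log-Lipschitz estimate \eqref{CZ-infty} handles $p=+\infty$), while the circulations $\calC_{i,\nu}$, given by the analogue of \eqref{vort-ss2-e}, are uniformly bounded since they are either prescribed from the data (sources) or controlled by the exiting-flux bound (sinks). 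Hence $v_\nu-v_{g_\nu}=\sum_i\calC_{i,\nu} X_i+K_{H}[\omega_\nu]$ is bounded in $L^\infty(0,T;W^{1,p}(\calF))$, the building block for \eqref{cv-fc3}.

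To promote these bounds to the continuous-in-time convergences \eqref{cv-fc} and \eqref{cv-fc3}, I would invoke the Arzel\`a--Ascoli variant of Theorem \ref{th-ascoli}, in the form of Corollary \ref{th-compact-L1} when $p=1$. Weak equicontinuity in time is read off the equation: for $\phi$ in a dense countable family, $\frac{d}{dt}\int_\calF\omega_\nu\phi=\int_\calF\omega_\nu\,v_\nu\cdot\nabla\phi-\nu\int_\calF\nabla\omega_\nu\cdot\nabla\phi+(\text{boundary terms})$, whose right-hand side is bounded in $L^1(0,T)$ uniformly in $\nu$, giving a common modulus of continuity. For $p>1$ the reflexivity of $L^p$ (or of $L^r$ for any $r<\infty$ when $p=+\infty$) supplies the pointwise-in-time weak relative compactness; for $p=1$ this compactness must instead come from uniform integrability, and here the renormalized energy estimate together with the De la Vall\'ee Poussin lemma (Lemma \ref{Delavalle:ain}) and Corollary \ref{Cor:DLVPoussin}, applied to the incoming datum weighted by $g_\nu\,ds$, propagates uniform integrability of $\{\omega_\nu(t)\}$ uniformly in $t$ and $\nu$. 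Extracting a diagonal subsequence then realizes \eqref{cv-fc} and, through the weak continuity of $K_H$, \eqref{cv-fc3}.

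Finally I would pass to the limit in the linear relations. Since $\omega\mapsto K_H[\omega]$ is weakly continuous and $v_{g_\nu}\to v_g$, $\calC_{i,\nu}\to\calC_i$, the limit $v=v_g+\sum_i\calC_iX_i+K_H[\omega]$ satisfies \eqref{u:dec:ss}, and the a priori inequalities \eqref{apq}/\eqref{apinf} transfer to $(\omega,\omega^-)$ by weak lower semicontinuity. The circulation identity \eqref{vort-ss2-e} is immediate for $i\in\calI^+$ from the convergence of the prescribed data. \emph{The main obstacle is the sink circulations} $i\in\calI^-$, which involve the trace of $\omega_\nu$ on $\partial\calF^-$: one must identify the weak limit of $g_\nu^{1/p}\omega_\nu$ in $L^p((0,T)\times\partial\calF^-)$ as $g^{1/p}\omega^-$, thereby \emph{defining} the exiting vorticity $\omega^-$ (which does not arise from any standard trace theorem), and then pass to the limit in $\int_0^t\int_{\pS^i}\omega_\nu g_\nu$ by combining this weak convergence with the strong convergence of the weight $g_\nu^{1-1/p}$ afforded by Lemma \ref{reg-data}. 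This is precisely where the source--sink separation of Remark \ref{rem:4}, which makes the exiting flux a well-controlled quantity, plays its decisive role.
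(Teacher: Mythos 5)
Your proposal reproduces the paper's overall strategy (convex renormalized a priori bounds as in \eqref{est:con:Gun}, De la Vall\'ee Poussin and Dunford--Pettis for $p=1$, the Arzel\`a--Ascoli variant of Theorem \ref{th-ascoli}, the definition of $\omega^-$ by dividing the weak limit of the exiting flux by $g^{1/p}$ using $g>0$ on $\partial\calF^-$, the Biot--Savart estimate \eqref{CZ} for \eqref{cv-fc3}, and \eqref{cv-fc2} from the $L^2$ energy identity combined with \eqref{conv-visc-appr}), but it contains a genuine gap at the equicontinuity step. You assert that the right-hand side of $\frac{d}{dt}\int_\calF \omega_\nu\phi = \int_\calF \omega_\nu\, v_\nu\cdot\nabla\phi - \nu\int_\calF\nabla\omega_\nu\cdot\nabla\phi + (\text{boundary terms})$ is bounded in $L^1(0,T)$ uniformly in $\nu$. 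For the nonlinear term this is only true when $p\geq 4/3$: one has $\omega_\nu\in L^\infty(0,T;L^p)$ and, by \eqref{CZ} and Sobolev embedding, $v_\nu$ controlled in $L^{2p/(2-p)}$ in space, so that $\omega_\nu v_\nu$ lies in $L^r$ with $\frac1r=\frac1p+\frac{2-p}{2p}=\frac{4-p}{2p}$, and $r\geq 1$ exactly when $p\geq 4/3$. In the range $p\in[1,4/3)$ --- which includes the case $p=1$ central to Theorem \ref{exi:L1:ss} --- the product $\omega_\nu v_\nu$ is not uniformly integrable by any direct H\"older argument, so your claimed common modulus of continuity is unproven precisely where the compactness is hardest.

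The paper closes this gap with two devices your proposal does not contain. First, for the self-interaction term it replaces $\int_\calF \omega_\nu K_H[\omega_\nu]\cdot\nabla\varphi$ by the symmetrized double integral \eqref{lasalle} with the auxiliary kernel $H_\varphi$ of \eqref{auxH}, whose boundedness (Lemma \ref{Hbd}, and the interior decomposition \eqref{et} for compactly supported test functions) yields a bound by $\|\omega_\nu(t)\|_{L^1(\calF)}^2$, hence an estimate on $\partial_t\omega_\nu$ in a negative Sobolev space using only the $L^1$ bound on the vorticity. Second, for the term $\int_\calF \omega_\nu\, v_{g_\nu}\cdot\nabla\varphi$ it introduces the stream function $\eta_\nu$ with $\Delta\eta_\nu=\omega_\nu$ in $\calF$ and $\eta_\nu=0$ on $\partial\calF$, and integrates by parts exploiting the harmonicity of $v_{g_\nu}$, so that no derivative falls on $\omega_\nu$ and the resulting bound involves only $\|v_{g_\nu}\|_{W^{1,p}}\|\omega_\nu\|_{L^p}$ (for $p=1$ one uses instead $v_{g_\nu}\in L^\infty$, available from the smoothness of $g_\nu$). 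These are not technical refinements but the essential ideas that make the proposition true for all $p\in[1,+\infty]$; without them your argument establishes the statement only for $p\geq 4/3$. The remaining elements of your proof (identification of $\omega^-$, passage to the limit in \eqref{u:dec:ss} and in the sink circulations using the strong convergence of $g_\nu$, lower semicontinuity for \eqref{apq} and \eqref{apinf}) coincide with the paper's Steps 2--4 and are sound.
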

\begin{proof}[ Proof of Proposition \ref{visc-appr}]

We will proceed in six steps.

  \paragraph{Step 1.  \textit{A priori} bounds.}
  Let $ G $ a positive even convex function. 
Formally, multiplying  \eqref{equ:app:ss1}  by   $ G'(|\omega_{\nu}|) \frac{ \omega_{\nu} }{|\omega_{\nu}| } $ and using the rule
$$ \partial_i \vert \omega_{\nu}\vert  = \frac{ \omega_{\nu} }{|\omega_{\nu}| }  \partial_i \omega_{\nu} ,$$
for $i=t$, $x_1$ or $x_2$, we arrive at 
\begin{equation} \label{riri}
\partial_t G(\omega_{\nu}) + v_{\nu}\cdot \nabla G(\omega_{\nu})  =   \nu \Delta G(\omega_{\nu} ) -  \nu G''(\omega_{\nu} ) \vert  \nabla \omega_{\nu} \vert^2 . 
\end{equation}
Integrating on $\calF$ and using the boundary conditions \eqref{equ:app:ssin}  we have that 
$$
\int_{ \calF} v_{\nu}\cdot \nabla G(\omega_{\nu})  \, dx =
\int_{ \calF} \div ( G(\omega_{\nu}) v_{\nu} )  \, dx =
\int_{\partial \calF}  G(\omega_{\nu}) g_{\nu} \, ds , 
$$
and that 
$$
\nu \int_{ \calF}   \Delta G(\omega_{\nu})  \, dx =
\nu  \int_{ \partial\calF} \partial_n  G(\omega_{\nu}) \, dx =
\int_{\partial \calF_+}  \frac{ \omega_{\nu} }{|\omega_{\nu}| }   (\omega_{\nu} -\omega^{+}_{\nu})g_{\nu} \, ds .
$$
Thus, by integrating  \eqref{riri} on $[0,T] \times \calF$, 
we obtain:
\begin{align}
\label{est:con:Gun-pre}
\int_{\calF} G(|\omega_{\nu}(t,\cdot)|) + \int_0^{t} \int_{\partial \calF^{-}} g_{\nu} G(|\omega_{\nu}|) & + \nu \int_0^t \int_{\calF} G''(|\omega_{\nu}| ) \vert  \nabla \omega_{\nu} \vert^2 \leq  \int_{\calF} G(|\omega_{\nu}^{in}|) 
\\ \nonumber  &  \quad \quad 
   + \int_0^{t} \int_{\partial \calF^{+}} (- g_{\nu}) \Big( G(|\omega_{\nu}|) - \frac{ \omega_{\nu} }{|\omega_{\nu}| }   ( \omega^{+}_{\nu}- \omega_{\nu} ) \Big).
\end{align}
Using the convexity of the function $G$, we have that 
$$ G(\vert x\vert) - G'(\vert x\vert) \frac{  x}{ \vert x\vert} (y-x) - G(\vert y\vert) \leq 0 \quad \text{ and } \quad G''(|x|) \geq 0.$$
We use these inequalities with $x = \omega_{\nu}$ and $y = \omega^{+}_{\nu}$ to bound the two last terms of \eqref{est:con:Gun-pre} and the last term of the right hand side and we arrive at: 
\begin{equation}
\label{est:con:Gun}
\int_{\calF} G(|\omega_{\nu}(t,\cdot)|) + \int_0^{t} \int_{\partial \calF^{-}} g_{\nu} G(|\omega_{\nu}|)
 \leq  \int_{\calF} G(|\omega_{\nu}^{in}|) + \int_0^{t} \int_{\partial \calF^{+}} (-g_{\nu}) G(|\omega_{\nu}^+|).
\end{equation}
This bound can be rigorously justified by considering $\sqrt{x^2+\eps}$ as an appropriate sequence of regularizations of the absolute value and by  passing to the limit as $\eps$ goes to $0$.
Using the peculiar cases of the power functions we have that $\omega_{\nu}$ satisfies the 
 \textit{a priori}   bounds \eqref{apq} and \eqref{apinf}. 
 
\paragraph{Step 2. Convergence of the vorticity in the case where $ p > 1 $.} In the case where $ p > 1 $, we use the  \textit{a priori} bound \eqref{est:con:Gun} in the case where $G(x) = x^p$.
 This implies that $ \omega_{\nu} $ converges to $\omega $ in $ L^\infty ([0,T];L^{p}(\calF)-w)$
  and $g_{\nu}^{1/p} \omega_{\nu}$ converges to $ \mathfrak{f} $ in $ L^{p}((0,T)\times \partial \mathcal{F}^{-}) $. The second convergence in \eqref{cv-fc} follows from the fact that $ g > 0 $ on $ \partial \calF^{-} $ and defining $ \omega^{-} =  \mathfrak{f} / g^{1/p} $. 
To reinforce the convergence of  $ \omega_{\nu}$ as a strong convergence in time such as stated in the first  convergence in \eqref{cv-fc}, we are going to establish some bounds on the time derivative. 

In the case where $ p \geq 4/3 $ it follows from \eqref{u:dec:ss}, from \eqref{CZ} and from the Sobolev embedding theorem 
that the sequence $ (v_{\nu} \omega_{\nu} )_\nu$ is uniformly bounded in $ L^1 ((0,T); L^q(\calF) )$ with $q= 2p/ (2-p)$, so that by H\"older's inequality, 
the term $ v_{\nu} \omega_{\nu} $ is uniformly bounded in $ L^1((0,T) \times \calF )$. Then it follows from \eqref{equ:app:ss1} and from the Sobolev embedding theorem  that the sequence 
$ (\partial_t \omega_\nu )_\nu $ is bounded in a Sobolev space of negative order. 

In the case where $ p \in (1,4/3] $ is more tricky. For each $\nu$ in $(0,1)$, the smooth solution $ \omega_{\nu} $ of the system \eqref{equ:app:ss} also satisfies the weak formulation \eqref{weak:for:SS:MB}, and by 
using  \eqref{lasalle} and \eqref{u:dec:ss}, we obtain that it also satisfies the following viscous weak symmetrized formulation: 
 for any $ \varphi \in C^{\infty}_{c}(\bbR^{+}\times \calF) $, 
\begin{align}
\label{wf:1:equ:ss-nu}
\int_{\calF} \omega_{\nu}^{in}\varphi(0,.) dx & + \int_{\bbR^{+}}  \int_{\calF} \omega_{\nu} \big( \partial_t \varphi + v_g \cdot \nabla \varphi \big) \, dx dt
 +  \sum_{i}  \int_{\bbR^+}  \calC_{i,\nu}(t)  \int_{\calF} \omega_{\nu} X_i \cdot \nabla \varphi  \, dx \, dt \\ & 
 +  \int_{\bbR^+}\int_{\calF} \int_{\calF}H_{\varphi}(x,y)\omega_{\nu} (t,x)\omega_{\nu}(t,y) \, dx \, dy \, dt
 - \nu \int_{\bbR^{+}}\int_{\calF} \nabla \omega_{\nu} \cdot \nabla \varphi {\color{red} \, = 0,  } \nonumber 
\end{align}
where
\begin{equation}
\label{formuCi-nu}
\calC_{i,\nu}(t) = \calC_i^{in} - \int_{0}^{t} \int_{\pS^{i}} \omega_{\nu}^\pm g_{\nu} \ \text{ for } i \in \calI^\pm .
\end{equation}
 Then  we deduce the uniform bound for $ \partial_t \omega_{\nu}  $  thanks to Lemma  \ref{Hbd},
and
\begin{align*}
\int_{\calF} v_{g_{\nu}}\cdot \nabla \varphi \omega_{\nu} = \int_{\calF} v_{g_{\nu}} \cdot \nabla \varphi \Delta \eta_{\nu} = \int_{\calF} \Delta v_{g_{\nu}} \cdot \nabla \varphi \eta_{\nu} + \int_{\calF} \nabla v_{g_{\nu}}: \nabla^2 \varphi \eta_{\nu} - \int_{\calF} v_{g_{\nu}} \otimes \nabla  \eta_{\nu} : \nabla^2 \varphi, 
\end{align*}
where $ \Delta \eta_{\nu} = \omega_{\nu} $ in $ \calF $ and $ \eta_{\nu} = 0 $ in $ \partial \calF $. Moreover recall that $ v_{g_{\nu}} = \nabla \phi_{g_{\nu}} $ solution of $ \Delta \phi_{g_{\nu}} = 0 $ in $ \calF $ and $ \nabla \phi_{g_{\nu}} \cdot n = g_{\nu} $, in particular $ \Delta v_{g} = 0 $. It follows
\begin{align*}
\left| \int_{\calF} v_{g_{\nu}}\cdot \nabla \varphi \omega_{\nu} \right| \leq & \, \left| \int_{\calF} \nabla v_{g_{\nu}}: \nabla^2 \varphi \eta_{\nu} \right| + \left| \int_{\calF} v_{g_{\nu}} \otimes \nabla  \eta_{\nu} : \nabla^2 \varphi \right| \\ \leq & \, \| \nabla v_{g_{\nu}}\|_{L^p(\calF)} \|\nabla^2 \varphi \|_{L^{\infty}(\calF)}\|\eta_{\nu} \|_{L^{\frac{p}{p-1}}(\calF)} \\ & \, +  \| v_{g_{\nu}}\|_{L^{2}(\calF)}\|\eta_{\nu} \|_{L^{2}(\calF)}\|\nabla^2 \varphi \|_{L^{\infty}(\calF)} \\
\leq & \, \| v_{g_{\nu}}\|_{W^{1,p}(\calF)}\|\omega_{\nu}\|_{L^{p}(\calF)}\|\varphi\|_{W^{2,\infty}_{0}(\calF)},
\end{align*}
where from the Sobolev embedding in dimension two we have $ W^{2,k} \subset L^{\infty} $ and $ W^{1,p} \subset L^2 $ for $ p> 1$.

We now apply the following version of  the Aubin-Lions lemma, see for example \cite[Lemma 11]{BodyDelort},  to  $f_\nu = \omega^\nu$
 with  $X= L^{\frac{ p}{p-1 } } (\calF )$ (respectively $ L^{1 } (\calF )$ if $p=+\infty$) 
 and $Y =H^{M+1}_{0}(\calF )$,   with $M$ large enough.

\begin{Lemma}
\label{weakc}
 Let $X$ and $Y$ be two Banach spaces such that $Y$ is dense in $X$ and $X$ is separable. Assume that $(f_n )_n $ is a bounded sequence in  $L^{\infty}(0,T; X')$ such that  $(\partial_t f_n )_n $ is  bounded in $L^{1}(0,T; Y')$. Then $(f_n )_n $ is relatively compact in  $C( [0,T ]; X' - w* )$.
\end{Lemma}
In particular this proves the first  convergence in \eqref{cv-fc} in the case  where $p>1$.

  \paragraph{Step 3.  Convergence  of the vorticity in the case where $p=1$.  }
  In the case where $p=1$, by \eqref{cv-data} and Dunford-Pettis' theorem we have that   $( \omega^{in}_{\nu})$ and $ ( \omega^+_{\nu} g^+_{\nu})$  are uniformly integrable
  respectively  in    $L^{p}(\calF) $ and  in  $L^{p}((0,T)\times \partial \mathcal{F}^{+})$. 
  Therefore, by Lemma \ref{Delavalle:ain} and  Corollary \ref{Cor:DLVPoussin}, there 
exists an even convex function $ G: \bbR \to \bbR^{+} $ such that
\begin{equation*}
\lim_{|s| \longrightarrow + \infty} \frac{G(s)}{|s|} = + \infty , \quad 
\sup_{\nu} \int G(|\omega^{in}_{\nu} |) < + \infty ,
\\ \quad \text{ and } \quad \sup_{\nu}\int_{\partial \calF^{+}} G(  \omega^+_{\nu}) (-g_{\nu}) < + \infty. 
\end{equation*}  
Then by \eqref{est:con:Gun}, we deduce that 
\begin{equation*}
\sup_t \sup_{\nu} \int G(|\omega_{\nu} (t,\cdot)|) < + \infty ,
\\ \quad \text{ and } \quad \sup_{\nu}\int_{\partial \calF^{-}}G(  \omega_{\nu}) g_{\nu}  < + \infty. 
\end{equation*}  
Therefore, using again  Lemma \ref{Delavalle:ain} and Dunford-Pettis' theorem (more precisely the parts regarding the reverse statements) for any $t$ in $[0,T]$ the sequence $(\omega_{\nu} (t,\cdot))_\nu$  is weakly relatively compact in $L^1$. 
On the other hand we can obtain a uniform bound of $ \partial_t 	\omega_{\nu} $ in a Sobolev space of negative order by proceeding as in the case $ p \in (1,4/3] $ by using that $ v_{g_{\nu}} $ in $ L^{\infty}(\calF) $, a consequence of the regularity of $ g_{\nu} $.
Then it suffices to apply Corollary 
\ref{th-compact-L1} to conclude that the first  convergence in \eqref{cv-fc} holds true in the case  where $p=1$.

\paragraph{Step 4. Endgame.} 
Since  $ \omega_{\nu} $ and $ v_{\nu }$ are related by \eqref{u:dec:ss}
the convergence of the velocity in 
\eqref{cv-fc3} straightforwardly follows from the convergence of the vorticity, see in particular the property \eqref{CZ}. 
Moreover  we can pass to the limit in the relation  \eqref{u:dec:ss}  and we obtain that $ \omega$ and $ v$ are related by \eqref{u:dec:ss}.  For more details on the convergence of the part of the velocity associated with the circulations see \eqref{T4}.

The bound of the gradient of the vorticity in \eqref{cv-fc2}
follows from the $ L^2 $ a \textit{priori} bounds, corresponding to the case where $G(x) = x^2$ and from the hypothesis \eqref{conv-visc-appr} on the data.
 
\end{proof} 

\section{Proof of Theorem \ref{def:wea:sol:ss} on distributional solutions}
\label{cas43}

Let,  for $\nu $ in $(0,1)$,  $ \omega^{in}_{\nu} $, $ \omega_{\nu}^{+} $ and $ g_{\nu} $, as in 
Lemma \ref{reg-data}, and $ \omega_{\nu} $ the corresponding global unique smooth solution of the system \eqref{equ:app:ss} as in Lemma \ref{visc-ex}. 
 These solutions satisfy \eqref{weak:for:SS:MB} for any $ \varphi \in C^{\infty}_{c}(\bbR^{+}\times \overline{\calF}) $. 
 By applying Proposition \ref{visc-appr}, in the case where the family $(v_{\nu})_{\nu \in (0,1)}$ is related to the family of vorticity $(\omega_{\nu})_{\nu \in (0,1)}$ by \eqref{u:dec:ss},  we obtain that, up to a subsequence, 
 $( \omega_{\nu} )_\nu$ and $(g_{\nu}^{1/p} \omega_{\nu})_\nu$ satisfy \eqref{cv-fc}, and $(v_{\nu})_{\nu \in (0,1)}$ satisfies 
  \eqref{hyp-vnu}, and the limit vorticity $\omega$ and the limit velocity $v$  satisfy \eqref{u:dec:ss} and \eqref{vort-ss2-e}.
  
By the  Rellich-Kondrachov theorem,  using that  $2p/(2-p) > p/(p-1)$  when $ p > 4/3 $,  we obtain that,  up to a subsequence, $(v_{\nu})_{\nu \in (0,1)}$ converges, strongly,  in $$L^1_{loc}(\bbR^+ ;L^{p/(p-1)}(\calF)).$$

These convergences allow to pass to the limit in  \eqref{weak:for:SS:MB} and to  arrive at \eqref{wf:equ:ss:noren}. 
Moreover the continuity in time with values in $ L^p $ of $ \omega $ and the equality  \eqref{apq} follow from the fact that $ (\omega, \omega^{-})$ is also a renormalized solution which will be proved in the next section. The bound \eqref{apinf} follows from the lower semi-continuity of the weak limits.

\section{Proof of Theorem \ref{exi:Lp:ss} on renormalized solutions}
\label{sec-pr-ren}

This section is devoted to the proof of Theorem \ref{exi:Lp:ss} on the existence of renormalized solutions to the Euler equations in presence of sources and sinks in the case where
the input vorticities are in $L^p$, with  $ p \in (1,\infty]$. The proof also relies on the viscous approximations built in the previous section. 
The next subsection is devoted to the convergence of a subsequence of these approximations and to the proof that the limit is a  renormalized solution. 
Then in Subsection \ref{subsec-ren-2}  we prove the strong convergence of  the approximated vorticities
 $ \omega_{\nu}$  in $C_{loc}(\bbR^+; L^p(\calF)) $, not only in $C_{loc}(\bbR^+; L^p(\calF) - w) $, 
 which  concludes the proof of Theorem \ref{the:str}.
The proof makes uses of the transport equation satisfied by the vorticity, where the velocity vector field is associated with the vorticity by   \eqref{vort-ss2-d} and \eqref{vort-ss2-e}.
With a few adaptations, it is also possible to deal with the case of a transport equation when the velocity field $v$ is given rather than associated with the vorticity, this is explained in Subsection \ref{just-transp}.

\subsection{Convergence of the approximations to a renormalized solution}
\label{sec-pr-re1}

We start as in the proof of Theorem  \ref{def:wea:sol:ss}: 
for $\nu $ in $(0,1)$,  we consider $ \omega^{in}_{\nu} $, $ \omega_{\nu}^{+} $ and $ g_{\nu} $, as in 
Lemma \ref{reg-data}, and $ \omega_{\nu} $ the corresponding global unique smooth solution of the system \eqref{equ:app:ss} as in Lemma \ref{visc-ex}. 
By applying Proposition \ref{visc-appr}, in the case where the family $(v_{\nu})_{\nu \in (0,1)}$ is related to the family of vorticity $(\omega_{\nu})_{\nu \in (0,1)}$ by \eqref{u:dec:ss},  we obtain that, up to a subsequence, 
 $( \omega_{\nu} )_\nu$ and $(g_{\nu}^{1/p} \omega_{\nu})_\nu$ satisfy \eqref{cv-fc}, and $(v_{\nu})_{\nu \in (0,1)}$ satisfies 
  \eqref{hyp-vnu}.  Moreover the limit vorticity $\omega$ and the limit velocity $v$  satisfy \eqref{u:dec:ss} and \eqref{vort-ss2-e}.
However we will not try to pass to the limit in the renormalized formulation \eqref{wf:equ:ss:ren}  of the evolution equation but rather proceed by duality, following the strategy used in  \cite{CS} to prove the corresponding result in the case without source nor sink. 
We will proceed in $2$ steps.

  \paragraph{Step 1. Reduction to a duality formula.}
By Proposition \ref{exi-pq}  there is a renormalized solution $ (\bar{\omega}, \bar{\omega}^{-}) $ to the transport equation associated with the vector field $ v $ and the data $ \omega^{in} $ and $ \omega^{+} $
and  to prove Theorem  \ref{exi:Lp:ss} it is sufficient to establish that 
\begin{equation}
  \label{esta}
 (\omega, \omega^{-} ) =  (\bar{\omega}, \bar{\omega}^{-})  .
\end{equation}
To prove  \eqref{esta} it is sufficient to prove that for any $ T > 0 $, for any smooth functions  $ \phi_- $ and $ \phi_{T} $, 
\begin{equation}\label{densi}
  \int_{\calF} ( \omega(T,.) -\bar{\omega}(T,.) )\phi_{T}
    + \int_0^{T} \int_{\partial \calF^{-}} g (\omega^{-} - \bar{\omega}^{-}) \phi_- = 0,
\end{equation}
which we are going to prove thanks to the duality formula. Indeed by Proposition \ref{exi-pq}  the renormalized solution 
 $ (\bar{\omega}, \bar{\omega}^{-}) $ of the transport equation  associated with the vector field $ v $ 
 satisfies 
\begin{equation}  \label{dua2}
\int_{\calF} \bar{\omega}(T,.) \phi_{T}    + \int_0^{T} \int_{\partial \calF^{-}} g \bar{\omega}^{-} \phi_- = \int_{\calF} \omega^{in} \phi(0,.)  - \int_{0}^{T} \int_{\partial \calF^{+}} g \omega^{+} \phi^{+} ,
\end{equation}
where $( \phi,\phi_+)$ is the unique 
 renormalized solution of  
\begin{subequations}  \label{back:trans:equ}
\begin{align}
-\partial_t \phi -v \cdot \nabla \phi = & \, 0 \quad && \text{ in } [0,T] \times \calF, \\
 \phi = & \, \phi_-  \quad && \text{ on } [0,T] \times \partial \calF^{-},  \\
\phi(T,\cdot) = & \, \phi_{T} . && 
\end{align}
\end{subequations}

 It is therefore sufficient to prove that  $ ({\omega}, {\omega}^{-}) $ satisfies the same equation, that is
\begin{equation} \label{dua1bis}
  \int_{\calF} \omega(T,.) \phi_{T} +  \int_0^{T} \int_{\partial \calF^{-}} g \omega^{-} \phi_- = \int_{\calF} \omega^{in} \phi(0,.)  - \int_{0}^{T} \int_{\partial \calF^{+}} g \omega^{+} \phi^{+} ,
\end{equation}
since the difference of   \eqref{dua1bis} and of \eqref{dua2}  leads to \eqref{densi}. 

 \paragraph{Step 2. Proof of the duality formula.}
 To prove \eqref{dua1bis}  we are going to establish first a similar duality formula for the viscous approximations, and then we will pass to the limit as $\nu$ converges to $0$. 
By using  Lemma \ref{visc-ex}, in the second case with $v_{\nu}$ as above, 
there exist a smooth solution $ \phi_{\nu}  $  
to the backward viscous transport equation
\begin{subequations} \label{vis:back:trans:equ}
\begin{align}
-\partial_t \phi_{\nu} -v_{\nu}\cdot \nabla \phi_{\nu} - \nu \Delta \phi_{\nu} = & \, 0\quad && \text{ in } [0,T] \times \calF,  \\
\partial_n \phi_{\nu}  = & \, -( \phi_{\nu} - \phi_-) g_{\nu} \mathds{1}_{\partial \calF^{-}} \quad && \text{ on } [0,T] \times \partial \calF,    \\
\phi_{\nu}(T,\cdot) = & \, \phi_{T}. &&  
\end{align}
\end{subequations}
Moreover, by Proposition \ref{visc-appr}, up a subsequence, 
 the functions $ \phi_{\nu} $ satisfy 
\begin{equation} \label{cv-test}
    \phi_{\nu} \longrightarrow \bar\phi \quad \text{ in } C_{w}([0,T];L^{p^{*}}(\calF))
     \quad \text{ and } \quad 
     g_{\nu}^{1/p^*} \phi_{\nu} \cv g^{1/p^{*}} \bar\phi^{+} \text{ in } L^{p^{*}}((0,T)\times \partial \mathcal{F}^{+})
\end{equation}
where $ p^{*} $ is the dual exponent of $ p $.
Thanks to   \eqref{hyp-vnu} and \eqref{vis:back:trans:equ}, by passing to the limit in \eqref{cv-test}, we obtain that 
$(\bar\phi, \bar\phi^{+})$ satisfies 
 \eqref{back:trans:equ} in the distributional sense. 
 By  Proposition \ref{exi-pq}, it is also a renormalized solution of  \eqref{back:trans:equ}, and by uniqueness, 
 $(\bar\phi, \bar\phi^{+}) = (\phi , \phi_+)$. 
 Now, for $\nu$ in $(0,1)$, since  $ \omega_{\nu} $ is a smooth solution of the system \eqref{equ:app:ss}, it also satisfies the weak formulation 
  \eqref{weak:for:SS:MB}. Considering in particular the test function 
   $  \varphi = \phi_{\nu} $  
  and by an integration by parts of  the term containing $\nabla$ we deduce that 
for any $ \nu >0$,  
\begin{gather}
\label{amel}
- \int_{\calF} \omega_{\nu} (T,.) \phi_{\nu} (T,.) dx
 + \int_{\calF} \omega^{in}_{\nu} \phi_{\nu}(0,.) dx +
\int_0^{T}\int_{\calF} \omega_{\nu} ( \partial_t \phi_{\nu}  +  v_{\nu} \cdot \nabla  \phi_{\nu} +   \nu \Delta   \phi_{\nu} )
\\ \nonumber \quad \quad  = \int_0^{T}\int_{\partial \calF^{+}} g_{\nu} \omega^+_{\nu} \phi_{\nu} + \int_0^{T}\int_{\partial \calF^{-}} g_{\nu} \omega_{\nu}  \phi_{\nu}
+   \nu \int_{\bbR^{+}}\int_{\partial \calF}  \omega_{\nu} \partial_n \phi_{\nu}  .
\end{gather}
Let us mention that the time integrals in  \eqref{weak:for:SS:MB} can be converted into the integrals on $[0,T]$ as above by a standard approximation process by smooth functions of the truncation $ \mathds{1}_{(0,T)} $. 
Using \eqref{vis:back:trans:equ}
 we deduce from \eqref{amel} that for any $ \nu >0$,  
\begin{gather}
\label{amel2}
 \int_{\calF} \omega_{\nu} (T,.) \phi_{T} dx + 
 \int_0^{T}\int_{\partial \calF^{-}} g_{\nu} \omega_{\nu} \phi_-
 =  \int_{\calF} \omega^{in}_{\nu} \phi_{\nu}(0,.) dx 
- \int_0^{T}\int_{\partial \calF^{+}} g_{\nu} \omega^+_{\nu} \phi_{\nu}   .
\end{gather}
Using \eqref{cv-data}, \eqref{cv-fc} and   \eqref{cv-test} we deduce \eqref{dua1bis} and therefore conclude the proof of Theorem \ref{exi:Lp:ss}.

\begin{Remark}
\label{lavori}
In the previous proof we use that $ \omega_{\nu} $ converges to $ \omega $ in $ C^0_w(0,T;L^p(\calF))$ to identify $ \omega $ with $ \bar{\omega} $. Let us note that following the same strategy, it is possible to show the same result in the case where the convergence holds weakly-star in $ L^{\infty}(0,T;L^p(\calF)) $. To do that is enough to consider an inverse flow $ \phi_{\nu} $ which is zero at final time and with a source term. More precisely $ \phi_{\nu} $ satisfies  
\begin{align*}
-\partial_t \phi_{\nu} -v_{\nu}\cdot \nabla \phi_{\nu} - \nu \Delta \phi_{\nu} = & \, \chi \quad && \text{ in } [0,T] \times \calF,  \\
\partial_n \phi_{\nu}  = & \, -( \phi_{\nu} - \phi_-) g_{\nu} \mathds{1}_{\partial \calF^{-}} \quad && \text{ on } [0,T] \times \partial \calF,    \\
\phi_{\nu}(T,\cdot) = & \, 0. &&  
\end{align*}
where $ \chi \in C^{\infty}_{c}((0,T)\times \calF ) $. Using the properties of $ \phi_{\nu} $, we deduce

\begin{equation*}
\int_0^{T}\int_{\calF} (\omega - \bar{\omega})\chi + \int_0^{T} \int_{\partial \calF^{-}} g (\omega^{-} - \bar{\omega}^{-}) \phi_- = 0,
\end{equation*}
which is the analogous of \eqref{densi}. This approach will be used in the proof of Theorem \ref{Theo:2023}. Finally under the hypothesis that 
\begin{equation*}
\omega_{\nu} \cvwstar \omega \text{ in } L^{\infty}(0,T;L^p(\calF)) \quad \text{ and } \quad \omega_{\nu}(T,.) \cv \omega_T \text{ in } L^p(\calF),
\end{equation*}
then by taking an inverse flow $ \phi_{\nu} $ with source term and non-zero initial data, we deduce
\begin{equation}
\label{lavori:1}
 \int_0^{T}\int_{\calF} (\omega - \bar{\omega})\chi +
  \int_{\calF} ( \omega(T,.) -\bar{\omega}(T,.) )\phi_{T}
    + \int_0^{T} \int_{\partial \calF^{-}} g (\omega^{-} - \bar{\omega}^{-}) \phi_- = 0,
\end{equation}
in particular also $ \omega_T = \omega(T,.) = \bar{\omega}(T,.) $.

\end{Remark}

\subsection{Strong convergence of the vorticity}
\label{subsec-ren-2}

Above, we have proven that the sequence of approximate solutions $ \omega_{\nu} $  converges to $\omega $,  as the parameter $\nu$ goes to $0$, in $ C_{loc}(\bbR^+;L^{p}(\calF)-w) $ and $ \omega_{\nu}g_{\nu}^{1/p} \cv \omega^{-} g^{1/p} $ in $ L^{p}_{loc}(\bbR^+ \times \partial \calF^{-} )$ where $ (\omega, \omega^{-}) $ is the unique renormalized solution to the Euler system in vorticity form.
Indeed we can prove that  the convergences of $ \omega_{\nu} $ and of $ \omega_{\nu} g_{\nu}^{1/p} $ are  strong respectively in $ C_{loc}^{0}(\R_+;L^p(\calF)) $ and $ L_{loc}^{p}(\bbR^+\times \partial \calF^{-})$, in the case where  $ p $ is in $(1,\infty) $. 

\begin{Theorem}
\label{the:str}
Let  $ p $ in $(1,\infty) $. 
  As  $\nu$ goes to $0$, the sequence 
 $ \omega_{\nu}$ converges to  $\omega $ in $ C^{0}_{loc}(\bbR^+; L^p(\calF)) $
 and the sequence $ \omega_{\nu}g_{\nu}^{1/p} $  converges to  $\omega^{-} g^{1/p} $ in $ L^p_{loc}(\bbR^{+}\times \partial \calF^-)$.
\end{Theorem}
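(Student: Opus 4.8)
The plan is to upgrade the weak convergence already obtained in Proposition \ref{visc-appr} to strong convergence by showing that the relevant $L^p$ norms converge, and then invoking the uniform convexity of $L^p$ for $p\in(1,\infty)$ (Radon--Riesz property): weak convergence together with convergence of norms yields strong convergence. The two pillars of the argument are the energy inequality \eqref{est:con:Gun} satisfied by the approximations $\omega_\nu$ with $G(x)=|x|^p$, and the energy equality \eqref{apq} (with $q=p$) satisfied by the limit $\omega$, which is available precisely because $\omega$ was identified with the renormalized solution in Subsection \ref{sec-pr-re1}.

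First I would record that the data converge strongly. By the construction in Lemma \ref{reg-data} one has $g_\nu^{1/p}\omega^+_\nu=\mathfrak{f}_\nu\to g^{1/p}\omega^+$ strongly in $L^p((0,T)\times\partial\calF^+)$ and $\omega^{in}_\nu\to\omega^{in}$ strongly in $L^p(\calF)$, so that, writing
\[
R_\nu(t):=\|\omega^{in}_\nu\|_{L^p(\calF)}^p+\int_0^t\int_{\partial\calF^+}(-g_\nu)|\omega^+_\nu|^p,
\]
the functions $R_\nu$ converge uniformly on $[0,T]$ to $R(t):=\|\omega^{in}\|_{L^p}^p+\int_0^t\int_{\partial\calF^+}(-g)|\omega^+|^p$. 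Setting $N_\nu(t):=\|\omega_\nu(t)\|_{L^p(\calF)}^p+\int_0^t\int_{\partial\calF^-}g_\nu|\omega_\nu|^p$ and $N(t):=\|\omega(t)\|_{L^p}^p+\int_0^t\int_{\partial\calF^-}g|\omega^-|^p$, the inequality \eqref{est:con:Gun} reads $N_\nu(t)\le R_\nu(t)$, while the energy equality for the limit gives $N(t)=R(t)$. Using the two weak lower semicontinuity bounds $\|\omega(t)\|_{L^p}^p\le\liminf_\nu\|\omega_\nu(t)\|_{L^p}^p$ and $\int_0^t\int_{\partial\calF^-}g|\omega^-|^p\le\liminf_\nu\int_0^t\int_{\partial\calF^-}g_\nu|\omega_\nu|^p$ (the latter from the weak $L^p$ convergence of $g_\nu^{1/p}\omega_\nu$ in \eqref{cv-fc}), the sandwich $N(t)\le\liminf N_\nu(t)\le\limsup N_\nu(t)\le R(t)=N(t)$ forces $N_\nu(t)\to N(t)$; separating the two nonnegative contributions then shows that each converges individually. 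Hence $\|\omega_\nu(t)\|_{L^p}\to\|\omega(t)\|_{L^p}$ for every $t$ and $\int_0^t\int_{\partial\calF^-}g_\nu|\omega_\nu|^p\to\int_0^t\int_{\partial\calF^-}g|\omega^-|^p$.

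With the norms under control, uniform convexity immediately yields, for each fixed $t$, the strong convergence $\omega_\nu(t)\to\omega(t)$ in $L^p(\calF)$; taking $t=T$ for the boundary quantity gives $g_\nu^{1/p}\omega_\nu\to g^{1/p}\omega^-$ strongly in $L^p((0,T)\times\partial\calF^-)$, which is the second claim, and in particular $g_\nu|\omega_\nu|^p\to g|\omega^-|^p$ in $L^1$. The remaining and main obstacle is to promote the pointwise-in-time strong convergence to uniform (i.e. $C^0$) convergence. For this I would argue by contradiction: if $\sup_{[0,T]}\|\omega_\nu-\omega\|_{L^p}\not\to0$, there are times $t_\nu\to t_*$ with $\|\omega_\nu(t_\nu)-\omega(t_\nu)\|_{L^p}\ge\delta>0$. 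By weak equicontinuity in $C_w([0,T];L^p)$ and continuity of the limit one has $\omega_\nu(t_\nu)\rightharpoonup\omega(t_*)$ and $\omega(t_\nu)\to\omega(t_*)$ strongly. Using $N_\nu(t_\nu)\le R_\nu(t_\nu)\to R(t_*)=N(t_*)$ together with $\int_0^{t_\nu}\int_{\partial\calF^-}g_\nu|\omega_\nu|^p\to\int_0^{t_*}\int_{\partial\calF^-}g|\omega^-|^p$ (from the $L^1$ boundary convergence and $t_\nu\to t_*$), one obtains $\limsup_\nu\|\omega_\nu(t_\nu)\|_{L^p}^p\le\|\omega(t_*)\|_{L^p}^p$; combined with weak lower semicontinuity this gives $\|\omega_\nu(t_\nu)\|_{L^p}\to\|\omega(t_*)\|_{L^p}$, whence $\omega_\nu(t_\nu)\to\omega(t_*)$ strongly by uniform convexity. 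Then $\|\omega_\nu(t_\nu)-\omega(t_\nu)\|_{L^p}\le\|\omega_\nu(t_\nu)-\omega(t_*)\|_{L^p}+\|\omega(t_*)-\omega(t_\nu)\|_{L^p}\to0$, contradicting the choice of $t_\nu$. This uniform-in-time step, resting on the uniform convergence of $R_\nu$ and on the strong boundary convergence, is where the care is needed; the rest is standard functional analysis.
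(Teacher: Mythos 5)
Your proof is correct, but it follows a genuinely shorter route than the paper's. The paper's proof of Theorem \ref{the:str} proceeds in seven steps built around the bounded-derivative strictly convex function $G(x)=x^2/\sqrt{x^2+1}$: it extracts weak limits $\mathfrak{G},\mathfrak{g}$ of $G(\omega_\nu)$ and $G(\omega_\nu)g_\nu$, identifies them with $G(\omega)$ and $G(\omega^-)$ by the backward-transport duality formula of Proposition \ref{exi-pq} (as in the proof of Theorem \ref{exi:Lp:ss} and Remark \ref{lavori}), deduces almost-everywhere convergence from strict convexity (Lemma \ref{lem:ref} and Corollary \ref{cor:ref}), and then upgrades to strong convergence via uniform integrability of $|\omega_\nu|^p$ and $g_\nu|\omega_\nu|^p$ (De la Vall\'ee Poussin, Corollary \ref{Cor:DLVPoussin}) and Vitali's lemma, before handling every time $t$ and then uniformity in $t$ by contradiction. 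You instead apply the energy inequality \eqref{est:con:Gun} directly with $G(x)=|x|^p$, sandwich it against the energy \emph{equality} \eqref{apq} (with $q=p$) for the limit using weak lower semicontinuity of both the interior and boundary functionals, split the two nonnegative contributions, and conclude at each time and on the boundary by the Radon--Riesz property of the uniformly convex $L^p$; your uniform-in-time contradiction argument is essentially the mechanism of the paper's Step 7, which you observed suffices globally, making Steps 1--6 dispensable for $p\in(1,\infty)$. What the paper's heavier machinery buys is robustness: the a.e.-convergence/Vitali route needs only strict convexity of a sublinear $G$, not uniform convexity nor an exact $L^p$ balance, and it is the same toolkit that the authors deploy in the $L^1$ setting (Theorem \ref{exi:L1:ss}), where your argument has no analogue. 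Two small caveats on your write-up: the renormalization identity \eqref{wf:equ:ss:ren} is stated only for $\beta\in C_b^1(\bbR)$, so invoking \eqref{apq} with equal sign at $q=p$ tacitly requires the standard truncation/monotone-convergence passage from bounded $\beta$ to $|x|^p$ (the paper asserts this equality in Theorem \ref{exi:Lp:ss}, so you may cite it, but it deserves a line); and the statement of Lemma \ref{reg-data} only claims weak convergence of $g_\nu^{1/p}\omega_\nu^+$, so you are right to appeal to the construction in its proof (where $g_\nu^{1/p}\omega_\nu^+=\mathfrak{f}_\nu$ converges strongly) to get the uniform convergence of your $R_\nu$ --- without that strong convergence of the data your sandwich would not close.
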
 

\begin{proof}[Proof of Theorem \ref{the:str}]

The proof is divided into seven steps.

\paragraph{Step 1.} Let  $ G(x) := x^2/{\sqrt{x^2+1}} $. 
The function $G$ is strictly convex function. Moreover, since $ G(x) \leq |x | $, we deduce from the bounds on the sequences  $ \omega_{\nu} $ and $ \omega_{\nu}g_{\nu}^{1/p} $
that the sequences $ G(\omega_{\nu}) $ and $ G(\omega_{\nu})g_{\nu}^{1/p} $ are uniformly bounded respectively in $ L^{\infty}_{loc}(\bbR^+; L^{p}(\mathcal{F})) $ and $ L_{loc}^{p}(\bbR^+\times \partial \mathcal{F}^-)$. 
As a consequence, up to  subsequences, for any $T>0$, 
\begin{equation}\label{cvG1}
G(\omega_{\nu}) \cvwstar \mathfrak{G} \quad \text{ in } L^{\infty}(0,T;L^p(\calF)) \quad \text{ and } \quad G(\omega_{\nu}) \cv \mathfrak{g} \quad \text{ in } L^p(0,T;L^{p}(\partial \calF)). 
\end{equation}

\paragraph{Step 2.} Recall that $ G(\omega_{\nu}) $ satisfies the system:
\begin{align}
\label{sys:for:G:nu}
\partial_t G(\omega_{\nu}) + v_{\nu}\cdot \nabla G(\omega_{\nu})  =  & \,  \nu \Delta G(\omega_{\nu} ) -  \nu G''(\omega_{\nu} ) \vert  \nabla \omega_{\nu} \vert^2
 \quad && \text{ in  } \bbR^{+} \times \calF,  \\ \label{sys:for:G:nub}
\nu \partial_n G(\omega_{\nu}) = & \, (\omega_{\nu}-\omega_{\nu}^{+})G'(\omega_{\nu})g_{\nu}\mathds{1}_{\calF^+}  \quad && \text{ in } \bbR^{+}\times \partial \calF_+. 
\end{align}   
By some integration by parts,   we deduce that 
\begin{align}
\label{ene:equ:ss:mm:ff}
\int_{\calF} & G(\omega_{\nu}(t,\cdot)) + \int_0^{t} \int_{\partial \calF^{-}} g_{\nu} G(\omega_{\nu})
+ \nu \int_0^t \int_{\calF} G''(\omega_{\nu} ) \vert  \nabla \omega_{\nu} \vert^2  \\ & \, 
+ \int_0^t\int_{\partial \calF^{+}} (G(\omega_{\nu}^+) - G(\omega_{\nu})  + (\omega_{\nu}-\omega_{\nu}^{+})G'(\omega_{\nu})) (-g_{\nu})
=  \int_{\calF} G(\omega_{\nu}^{in}) + \int_0^{t} \int_{\partial \calF^{+}} g_{\nu} G(\omega_{\nu}^{+}) \nonumber .
\end{align}   
Observe that, 
since $G$ is convex and $g_{\nu} $ is negative on $\calF^{+}$, the two last terms in the left hand side are nonnegative. 
Using on the one hand, the weak convergence of the sequences $ \omega_{\nu}$ and $ \omega_{\nu}g_{\nu}^{1/p} $ 
and the weakly lower semicontinuity of the functionals 
$$ \mathcal{G}(f) := \int_{\calF}G(f) \quad \text{and} \quad \mathcal{G}_b(f_1,f_2) := \int_0^t \int_{\partial \calF^{+}}G(f_1)|f_2| ,$$
for the left hand side, and on the other hand the strong convergence of the data to handle the right hand side, 
we deduce that 
\begin{align} \nonumber
\int_{\calF} & G(\omega(t,\cdot)) + \int_0^{t} \int_{\partial \calF^{-}} g G(\omega^-)
+ \liminf \nu \int_0^t \int_{\calF} G''(\omega_{\nu} ) \vert  \nabla \omega_{\nu} \vert^2  \\ \nonumber & \, 
+ \liminf\left( \int_0^t\int_{\partial \calF^{+}} (G(\omega_{\nu}^+) - G(\omega_{\nu})  + (\omega_{\nu}-\omega_{\nu}^{+})G'(\omega_{\nu}))(-g_{\nu})\right)
 \\ \label{acomb}
 & \,  \leq  \int_{\calF} G(\omega^{in}) + \int_0^{t} \int_{\partial \calF^{+}} g G(\omega^{+}).
\end{align} 
Moreover  $ (\omega, \omega^{-}) $ being a renormalized solution to the transport equation associated with the velocity $v$ and the data $\omega^{in}$ and $\omega^{+}$, 
it holds in particular that 
\begin{align}
\label{ene:equ:ss:mm}
\int_{\calF} & G(\omega(t,\cdot)) + \int_0^{t} \int_{\partial \calF^{-}} g G(\omega^-) =  \int_{\calF} G(\omega^{in}) + \int_0^{t} \int_{\partial \calF^{+}} g G(\omega^{+}).
\end{align} 
Combining  \eqref{acomb} and \eqref{ene:equ:ss:mm} we arrive at 
\begin{align} \nonumber
 \liminf \nu \int_0^t \int_{\calF} G''(\omega_{\nu} ) \vert  \nabla \omega_{\nu} \vert^2  
+ \liminf\left( \int_0^t\int_{\partial \calF^{+}} (G(\omega_{\nu}^+) - G(\omega_{\nu})  + (\omega_{\nu}-\omega_{\nu}^{+})G'(\omega_{\nu}))(-g_{\nu})\right)
  \leq  0.
\end{align} 
Therefore 
\begin{gather}\label{cvG2}
\Big (\nu G''(\omega_{\nu} ) \vert  \nabla \omega_{\nu} \vert^2 , (G(\omega_{\nu}^+) - G(\omega_{\nu})  + (\omega_{\nu}-\omega_{\nu}
^{+})G'(\omega_{\nu}))g_{\nu} \Big) \to 0 \, \nonumber \\  \text{ in } L^1_{loc}(\bbR^{+};L^{1}( \calF)) \times L^{1}_{loc}(\bbR^{+} \times \partial \calF^+). 
\end{gather} 

\paragraph{Step 3.}
 Let  $q$  such that   $p$ and $q$  are conjugated. 
Let $\Psi $ in $L^q ( [0,T] \times \partial \calF^{-} ;g \, dz \, ds)$ and 
$\chi $ in $L^1  ( [0,T]  ;L^q( \calF))$. 
Let $(\phi, \phi^+) $  the  $L^q$ renormalized solution of the backward transport equation:
\begin{align*}
-\partial_t \phi -v \cdot \nabla \phi = & \, \chi \quad && \text{ in } [0,T] \times \calF, \nonumber \\
 \phi = & \, \Psi  \quad && \text{ on } [0,T] \times \partial \calF^{-},  \label{rrr} \\
\phi(T,x) = & \,  0 .
&&  \nonumber
\end{align*}
Then, proceeding as in the proof of Theorem \ref{exi:Lp:ss},   
 with the help of the convergences  \eqref{cvG1} and \eqref{cvG1}, we obtain:  
\begin{equation} \label{duaG1}
    \int_0^{T}\int_{\calF}\mathfrak{G} \chi + \int_0^{T} \int_{\partial \calF^{-}} g \mathfrak{g} \Psi = \int_{\calF} G(\omega^{in}) \phi(0,.) 
    - \int_{0}^{T} \int_{\partial \calF^{+}} g G(\omega^{+}) \phi^{+}.
\end{equation}
On the other hand, since  $ (\omega, \omega^{-}) $ is a renormalized solution to the transport equation associated with the velocity $v$ and the data $\omega^{in}$ and $\omega^{+}$, it satisfies  the duality formula:
\begin{equation} \label{duaG2}
    \int_0^{T}\int_{\calF} G(\omega) \chi + \int_0^{T} \int_{\partial \calF^{-}} g G(\omega^{-}) \Psi = \int_{\calF} G(\omega^{in}) \phi(0,.) 
    - \int_{0}^{T} \int_{\partial \calF^{+}} g G(\omega^{+}) \phi^{+}.
\end{equation}
By combining \eqref{duaG1} and \eqref{duaG1}, we 
deduce that  $ G(\omega) = \mathfrak{G} $ and $ G(\omega^{-}) = \mathfrak{g} $.

\paragraph{Step 4.} 
We recall  the following result which is proved in the first step of the proof of Lemma 3.34 of \cite{NovS}.
\begin{Lemma}
\label{lem:ref}
 Let $ \mathcal{O} \subset \bbR^2 $ measurable, let $ f_{n} $, $ f $ a sequence of $ L^1(\mathcal{O}) $ functions and $ G $ a strictly convex function. If $ f_{n} \cv f $ and $ G(f_n) \cv G(f) $ in $ L^1(\mathcal{O}) $. Then,  up to a subsequence,  $ f_{n} $ converges  almost everywhere to $ f $ pointwise.  
\end{Lemma} 
To handle the convergence on the boundary we will also use the following corollary, which is proved in an appendix. 
\begin{Corollary}
\label{cor:ref}
Let $  p $ in $(1,+\infty)$,  let $ f_{\nu} $, $ f $ a sequence  of $ L^p((0,T) \times \partial \calF^{-};g_{\nu} \, dtds) $ functions and $ G $ a strictly convex function from $\R$ to $\R$ with bounded derivative. If $ f_{\nu} g_{\nu} \cv f g $ and $ G(f_{\nu}) g_{\nu} \cv G(f)g $ in $ L^p((0,T) \times \partial \calF^{-}) $. Then,  up to subsequence, $ f_{\nu} $ converges almost everywhere to $ f $. 
\end{Corollary}
From the previous steps, Lemma \ref{lem:ref} and Corollary \ref{cor:ref}, we deduce the almost everywhere convergences of the sequence $ \omega_{\nu} $ to $ \omega $ in $ (0,t) \times \calF $ and of the sequence  $ \omega_{\nu} $ to $ \omega^{-} $ in $ [0,T]\times \partial \calF^- $. 

\paragraph*{Step 5.} 
From De la Vall\'ee Poussin's lemma and Corollary \ref{Cor:DLVPoussin} together with the estimates from Step 1 of Proposition~\ref{visc-appr} we deduce that 
 the sequences $ |\omega_{\nu}|^p $ and $ g_{\nu}|\omega_{\nu}|^p|_{\partial \calF^{-}} $ are uniformly integrable.
Then, by the previous step and Vitali's Lemma (see for example Theorem 1.18 of \cite{NovS}), we have that 
\begin{equation*}
\omega_{\nu} \to \omega \text{ in } L^q(0,T;L^{p}(\calF)) \text{ for any  } q \in [1,+\infty) \quad \text{ and } \quad \omega_{\nu}g_{\nu}^{1/p}  \to  \omega^{-} g^{1/p} \quad \text{ in } L^p((0,T)\times \partial \calF^-).
\end{equation*}

\paragraph{Step 6.} The convergence $ \omega_{\nu} $ to $ \omega $ in $ L^q(0,T;L^p(\calF) ) $, implies that that $ \omega_{\nu}(t,.) $ converges to $ \omega(t,.) $ in $ L^p(\calF) $ for almost any time. Let now prove by contradiction that the convergence holds for any time. Suppose that there exists $ s \in (0,T] $ such that the convergence does not hold. In particular there is $\eps>0$ and there exists a subsequence of $( \omega_{\nu})$, which we do not relabel, such that $ \| \omega_{\nu}(s,.)- \omega(s,.)\|_{L^p(\calF)} \geq \eps > 0  $. From the \textit{a priori} estimates \eqref{est:con:Gun}, with $ t = s $, we deduce that passing to subsequences $ G(\omega_{\nu}(s,.)) \cv \mathfrak{G}_s $ in $ L^{p}(\calF) $. Moreover proceeding as mentioned in Remark \ref{lavori}, we deduce the identification formula \eqref{lavori:1}. In particular we identify $ \mathfrak{G}_s $ with $ G(\omega(s,.)) $. From Lemma \ref{lem:ref} and the weak convergence of $ \omega_{\nu}(s,.) $ to $ \omega(s,.) $, we obtain that up to subsequence $ \omega_{\nu}(s,.) $ converges strongly to $ \omega(s,.) $ in $ L^p(\calF)$ which is a contradiction. We have shown that $ \omega_{\nu}(t,.) $ converges to $ \omega(t,.) $ for any $ t \in [0,T]$.

\paragraph{Step 7.} We conclude by showing that the convergence holds uniformly in time. Suppose by contradiction that the convergence is not uniform. Then there exists $ \delta > 0 $, there exists a subsequence of $( \omega_{\nu})$, which we do not relabel,  and a sequence of times $ t_{n} $ such that $ \| \omega_{\nu}(t_{n}) - \omega(t_n) \|_{L^p(\calF)} \geq 2\delta $. The interval $ [0,T] $ is compact. Passing to a subsequence we can assume that $t_n \to t $. Using the continuity of $ \omega $ from $ n $ large enough  we have that $ \| \omega_{\nu}(t_{n}) - \omega(t) \|_{L^p(\calF)} \geq \delta $. 
On the other hand it follows from the $ C_{w}(L^p) $ convergence of $ \omega_{\nu} $ to $ \omega $ that  $ \omega_{\nu}(t_{n})  $ converges to $ \omega(t) $ weakly in $ L^p(\calF) $ and   combining \eqref{ene:equ:ss:mm:ff} and \eqref{ene:equ:ss:mm} we obtain that $\| \omega_{	\nu}(t_{n}) \|_{L^{p}(\calF)} $ converges to $\| \omega(t) \|_{L^{p}(\calF)} $. 
Since for $ 1 < p < \infty $, the space  $ L^p $ is  uniformly convex, and we deduce that 
 $ \omega_{\nu}(t_{n})  $ converges to $ \omega(t) $ in $ L^p(\calF) $, which is the desired contradiction. 
This concludes the proof of Theorem \ref{the:str}.

\end{proof}

\subsection{A note on the vanishing viscosity solution to the transport equation}
\label{just-transp}

Since the proof above uses the transport equation satisfied by the vorticity, one may wonder if similar results are true for the transport equation when the velocity field $v$ is given rather than associated with the vorticity. 
Indeed in the case where the fluid occupies the whole space $ \bbR^d $, 
 it is known, see \cite[Theorem IV.1]{DL} and more recently in \cite{NSW}, that the vanishing viscosity approximations converge
   in $ C([0,T];L^p) $ to the renormalized solution to the transport equation.
These results can be extended to the present setting, where sources and sinks are present.  However, compared to the proof above, some adaptations are needed.  In particular,  in Subsection \ref{subsec-ren-2}, we took advantage of the convergence of the vorticity in $ C([0,T];L^p - w) $, which it is not clear in the case where  the velocity field is not related to the vorticity through a div-curl system. 
For sake of completeness, we state and sketch the proof of the corresponding result.

\begin{Theorem}
\label{Theo:2023}
Let $p $  in $[1,+\infty]$ and $ q \in (1,+\infty) $. 
Let $ v $ in $ L^{1}_{loc}(\bbR^{+}; W^{1,q}(\calF)) $ a \ref{SSC:v} vector field. 
Let  $(\omega^{in} , \omega^{+}  )$ a (\ref{CIV})  in $L^p$. 
Let some families $ \omega^{in}_{\nu} $, $ \omega_{\nu}^{+} $, $ v_{\nu} $ as in 
Lemma \ref{reg-data}. 
Let  $ \omega_{\nu} $ the corresponding global unique smooth solution of the system \eqref{equ:app:ss} associated with the data $ \omega^{in}_{\nu} $, $ \omega_{\nu}^{+} $ and $ v_{\nu}$ as in Lemma \ref{visc-ex}. 
Then there exists a subsequence of $ \omega_{\nu} $ which we still denote  $ \omega_{\nu} $ and $ v_{\nu }$  and which satisfy for $ p < \infty $
\begin{gather} \label{cv-fc-kk}
    \omega_{\nu} \cvwstar \omega \quad \text{ in } L^{\infty}([0,T];L^{p}(\calF)),
    \quad 
    g_{\nu}^{1/p} \omega_{\nu} \cv g^{1/p} \omega^{-}  \text{ in } L^{p}((0,T)\times \partial \mathcal{F}^{-}), \\
\text{ and } \quad  \sqrt{\nu}  \nabla \omega_{\nu}  \longrightarrow 0 \quad \text{ in } L^{2}([0,T];L^2(\calF)). \nonumber
\end{gather}
Moreover  $ (\omega, \omega^{-}) $ is the unique renomalized solution to the transport associated with the velocity field $ v $ and the data $ (\omega^{in} ,  \omega^+) $.  
Finally, if $p, q \in (1,\infty)$, then the two first convergences in  \eqref{cv-fc-kk} can be improved into the following strong convergences: 
 $ \omega_{\nu} $ converges to $  \omega $ in $ C^{0}_{loc}(\bbR^+; L^p(\calF)) $ and 
  $ \omega_{\nu}g_{\nu}^{1/p} $ converges to $  \omega^{-} g^{1/p} $ in $ L^p_{loc}(\bbR^{+}\times \partial \calF^-)$.
\end{Theorem}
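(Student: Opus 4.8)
The plan is to run the proofs of Proposition~\ref{visc-appr}, Theorem~\ref{exi:Lp:ss} and Theorem~\ref{the:str} essentially verbatim, systematically replacing every place where the div-curl coupling \eqref{u:dec:ss} was exploited (for compactness of $v_\nu$ and for the $C_w([0,T];L^p(\calF))$ time-continuity of $\omega_\nu$) by the backward-flow duality of Remark~\ref{lavori}. First I would note that the a priori bounds of Step~1 of Proposition~\ref{visc-appr} are insensitive to whether $v_\nu$ is coupled to $\omega_\nu$ or merely given: the estimate \eqref{est:con:Gun} uses only $\div v_\nu=0$ and the boundary condition \eqref{equ:app:ssin2}. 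Taking $G(x)=|x|^p$ yields a uniform bound for $\omega_\nu$ in $L^\infty([0,T];L^p(\calF))$ together with the outlet bound for $g_\nu^{1/p}\omega_\nu$, while $G(x)=x^2$ combined with \eqref{conv-visc-appr} yields $\sqrt\nu\,\nabla\omega_\nu\to0$ in $L^2([0,T];L^2(\calF))$. For $p\in(1,\infty]$ reflexivity gives the weak-$*$ limit directly; for $p=1$ the weak $L^1$ compactness is obtained, as in Step~3 of Proposition~\ref{visc-appr}, from Corollary~\ref{Cor:DLVPoussin}, De la Vall\'ee Poussin's lemma and Dunford--Pettis. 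A subsequence then satisfies the first two convergences in \eqref{cv-fc-kk} and the third.

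The heart of the argument is the identification of the limit $(\omega,\omega^-)$ with the unique renormalized solution $(\bar\omega,\bar\omega^-)$ furnished by Proposition~\ref{exi-pq}. The crucial difference with Theorem~\ref{the:str} is that, lacking the div-curl structure, one controls $\omega_\nu$ only weakly-$*$ in $L^\infty([0,T];L^p(\calF))$, so the final-time slice $\omega_\nu(T,\cdot)$ is not a priori controlled; this is exactly the situation addressed in Remark~\ref{lavori}. Given $\chi\in C^\infty_c((0,T)\times\calF)$ and boundary data $\phi_-$, I would therefore solve, via Lemma~\ref{visc-ex}, the backward viscous transport equation of Remark~\ref{lavori} with source $\chi$ and zero final data, show by Proposition~\ref{visc-appr} and the uniqueness part of Proposition~\ref{exi-pq} that $\phi_\nu$ converges to the backward renormalized solution $(\phi,\phi^+)$, then test the viscous weak formulation \eqref{weak:for:SS:MB} with $\phi_\nu$ and pass to the limit. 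Because the backward equation makes the interior integrand collapse to $-\chi$ and the final-time term carries the factor $\phi_\nu(T,\cdot)=0$, no control of $\omega_\nu(T,\cdot)$ is needed, and one arrives, as in Remark~\ref{lavori}, at the pairing of $\omega-\bar\omega$ against an arbitrary smooth $\chi$ and of $\omega^--\bar\omega^-$ against arbitrary $\phi_-$ being identically zero. This forces $\omega=\bar\omega$ a.e.\ in $(0,T)\times\calF$ and $\omega^-=\bar\omega^-$ on the outlet, for every $T$; hence $(\omega,\omega^-)$ is the renormalized solution, and the comparison with the duality formula \eqref{dua1} is what makes the scheme closed.

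For the strong-convergence statement when $p,q\in(1,\infty)$ I would follow the seven steps of Theorem~\ref{the:str} with the same substitution. With $G(x)=x^2/\sqrt{x^2+1}$ strictly convex, the viscous energy identity \eqref{ene:equ:ss:mm:ff} together with weak lower semicontinuity gives \eqref{acomb}; since $(\omega,\omega^-)$ is now known to be renormalized, the renormalized energy equality \eqref{ene:equ:ss:mm} holds, and subtracting produces the vanishing of the production defect \eqref{cvG2}. A duality argument identical to Step~3 of Theorem~\ref{the:str}, again run with a source term in the spirit of Remark~\ref{lavori}, identifies the weak limits of $G(\omega_\nu)$ and of $G(\omega_\nu)g_\nu^{1/p}$ with $G(\omega)$ and $G(\omega^-)$. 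Lemma~\ref{lem:ref} and Corollary~\ref{cor:ref} then upgrade this to a.e.\ convergence of $\omega_\nu$ and of the outlet traces, and uniform integrability together with Vitali's lemma gives strong convergence in $L^q(0,T;L^p(\calF))$ and on $\partial\calF^-$. For the continuity-in-time conclusion, the pointwise-in-time weak convergence $\omega_\nu(s,\cdot)\rightharpoonup\omega(s,\cdot)$ that Step~6 of Theorem~\ref{the:str} extracted from the $C_w$ bound is here recovered instead from the nonzero-final-data identification \eqref{lavori:1}; uniform convexity of $L^p$ then converts weak convergence plus norm convergence into strong convergence at each fixed time, and a compactness-and-contradiction argument as in Step~7 makes it uniform on compact time intervals.

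The main obstacle is precisely this loss of $C_w([0,T];L^p(\calF))$ compactness: without \eqref{u:dec:ss} one has no uniform bound on $\partial_t\omega_\nu$ in a negative Sobolev space strong enough to control the time-traces, so every step of Theorem~\ref{the:str} that used a time-pointwise weak limit must be rerouted through the backward-flow duality of Remark~\ref{lavori}. Once that substitution is made systematically, the remaining estimates are those already established in Section~\ref{sec-pr-ren}, and the case $p=\infty$ is handled by the same weak-$*$ and interpolation remarks as in Proposition~\ref{visc-appr}.
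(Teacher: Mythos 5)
Your handling of the weak convergences in \eqref{cv-fc-kk} and of the identification of $(\omega,\omega^{-})$ with the unique renormalized solution is essentially the paper's own argument: the \textit{a priori} bounds of Step 1 of Proposition~\ref{visc-appr} indeed never use the div-curl coupling, the case $p=1$ is handled by De la Vall\'ee Poussin and Dunford--Pettis as in Step 3 there, and the identification is run through the backward viscous problem with source term $\chi$ and zero final data exactly as in Remark~\ref{lavori}, so that no control of $\omega_{\nu}(T,\cdot)$ is needed. Up to that point your proposal matches the paper.

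The gap is in the strong-convergence part. You propose to rerun all seven steps of Theorem~\ref{the:str} for \emph{every} $q\in(1,\infty)$, replacing each use of the $C_{w}([0,T];L^{p}(\calF))$ compactness by the fixed-final-time identification \eqref{lavori:1}. This is workable for the analogues of Steps 2--6: for $p\in(1,\infty)$, reflexivity gives weak limit points of each time slice $\omega_{\nu}(s,\cdot)$ and \eqref{lavori:1} identifies them uniquely. But it breaks down at Step 7, where one needs $\omega_{\nu_{n}}(t_{n})\cv\omega(t)$ in $L^{p}(\calF)$ along times $t_{n}\to t$ that vary with the viscosity. This is a weak-equicontinuity statement; in the paper it follows from a bound on $\partial_{t}\omega_{\nu}$ in a negative Sobolev space, which requires $v_{\nu}\omega_{\nu}$ to be integrable. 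For $q\geq 2$ this product bound holds (since $W^{1,q}(\calF)\hookrightarrow L^{s}(\calF)$ for every $s<\infty$ and $p>1$), and this is precisely why the paper, in that case, first recovers $\omega_{\nu}\to\omega$ in $C^{0}_{w}([0,T];L^{p}(\calF))$ ``by using the equations'' and then quotes Subsection~\ref{subsec-ren-2} verbatim. For $q<2$ and $p$ below the threshold $2q/(3q-2)$, however, $v_{\nu}\omega_{\nu}$ need not be locally integrable, no negative-order bound on $\partial_{t}\omega_{\nu}$ is available, and Remark~\ref{lavori} --- which pins down slices only at a final time fixed \emph{before} the limit --- provides no substitute for the moving-time weak convergence; your sentence that ``a compactness-and-contradiction argument as in Step 7 makes it uniform'' is unjustified exactly in this regime. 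The paper closes it by a different idea, absent from your proposal: following \cite{NSW}, it approximates the input data by data in $L^{q'}$ (with $q'$ the conjugate of $q$, so $q'>2$), for which the conjugate-exponent theory and the time compactness do apply and give strong convergence in $C^{0}_{loc}(\bbR^{+};L^{q'}(\calF))$ for the regularized problems; it then transfers the convergence to the original data uniformly in $\nu$ using the linearity of \eqref{equ:app:ss} in $\omega_{\nu}$, the stability estimate \eqref{est:con:Gun} applied to differences of solutions, the compatibility condition of type \eqref{conv-visc-appr}, and a triangle inequality. A minor further caution: your Step-3 analogue invokes the limit duality formula of Proposition~\ref{exi-pq}, part 4), which is stated only for conjugate exponents; in the non-conjugate setting of the theorem you should take bounded backward data, so that $\phi\in L^{\infty}$ and the commutator regularization works for any $q>1$ --- that is repairable, but the Step 7 issue for $q<2$ is not, without the density-and-stability argument (or a genuine proof of stability of the backward dual problem with respect to the final time, which you do not supply).
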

 
\begin{proof}
The proof of  \eqref{cv-fc-kk}
can be performed  as in the proof of Proposition \ref{visc-appr}.
Moreover, by using the duality method as in the proof of Theorem \ref{exi:Lp:ss}, and taking account Remark \ref{lavori} we
prove that $ (\omega, \omega^{-}) $ is the unique renomalized solution to the transport associated with the velocity field $ v $ and the data $ (\omega^{in} ,  \omega^+) $. 
  Finally it is possible to show the strong convergence of the vorticity as follows. 
For $ q \geq 2 $, we deduce that $ \omega_{\nu} \to \omega $ in $ C^0_{w}([0,T];L^{p}(\calF)) $ by using the equations so that one may proceed as in the case of the Euler system, see Subsection \ref{subsec-ren-2}.
For $q<2$, we can proceed as in \cite{NSW}, by considering a sequence  $ (\omega^{in}_{l}, \omega^{-}_{l}) \in L^{q'} $  converging  to $ (\omega^{in}, \omega^{-}) $ in $ L^p $. From Lemma \ref{reg-data} there exist sequences $ (\omega_{l,\nu}^{in}, \omega_{l,\nu}^+) $ of compatible regular data. Since $ q < 2 $, the estimates \eqref{conv-visc-appr} hold true and we can furthermore impose the extra condition
$$ \sup_{l}\left(\|\omega_{l,\nu}^{in}-\omega_{l}^{in}\|_{L^p(\calF)} + \|g_{\nu}^{1/p}\omega_{l,\nu}^+-g^{1/p}\omega_{l}^+\|_{L^{p}((0,T)\times \partial \calF^-)}\right) \longrightarrow 0 \quad \text{ as } \nu \longrightarrow 0. $$  
Then, by the triangle inequality, 
\begin{align*}
\|\omega_{\nu}(t,.) & \, -\omega(t,.)\|_{L^p(\calF)} + \|g_{\nu}^{1/p}\omega_{\nu}^+-g^{1/p}\omega^+\|_{L^{p}((0,t)\times \partial \calF^-)} \leq \\ & \,  \|\omega_{\nu}(t,.)-\omega_{l,\nu}(t,.)\|_{L^p(\calF)} + \|g_{\nu}^{1/p}\omega_{\nu}^+-g_{\nu}^{1/p}\omega_{l,\nu}^+\|_{L^{p}((0,t)\times \partial \calF^-)} \\ & \,  + \|\omega_{l,\nu}(t,.)-\omega_{l}(t,.)\|_{L^p(\calF)} + \|g_{\nu}^{1/p}\omega_{l,\nu}^+-g^{1/p}\omega_{l}^+\|_{L^{p}((0,t)\times \partial \calF^-)}   \\ & \,\|\omega_{l}(t,.)-\omega(t,.)\|_{L^p(\calF)} + \|g^{1/p}\omega_{l}^+-g^{1/p}\omega^+\|_{L^{p}((0,t)\times \partial \calF^-)}.
\end{align*}

Note that for the first and the last line of the right hand side the norm can be bounded by a constant times the size of the initial data, and therefore converge to zero. 
Moreover  the middle term converges to zero for any fixed $ l $, thanks to  the previous step since $ q' > p $.  

\end{proof}

\section{Proof of Theorem \ref{exi:L1:ss} on  symmetrized solutions}
 \label{sec-proof-sym}

 We start as in the proof of Theorem  \ref{def:wea:sol:ss} and of Theorem  \ref{exi:Lp:ss}: 
 for $\nu $ in $(0,1)$,  we consider $ \omega^{in}_{\nu} $, $ \omega_{\nu}^{+} $ and $ g_{\nu} $, as in 
Lemma \ref{reg-data}, with $p=1$, and $ \omega_{\nu} $ the corresponding global unique smooth solution of the system \eqref{equ:app:ss} as in Lemma \ref{visc-ex}. 
Lemma \ref{reg-data}, and $ \omega_{\nu} $ the corresponding global unique smooth solution of the system \eqref{equ:app:ss} as in Lemma \ref{visc-ex}. 
 These solutions satisfy  the weak formulation \eqref{wf:1:equ:ss-nu}
for any $ \varphi \in C^{\infty}_{c}(\bbR^{+}\times \overline{\calF}) $,
where the $\calC_{i,\nu}$ are given by \eqref{formuCi-nu}.
By applying Proposition \ref{visc-appr}, in the case where the family $(v_{\nu})_{\nu \in (0,1)}$ is related to the family of vorticity $(\omega_{\nu})_{\nu \in (0,1)}$ by \eqref{u:dec:ss},  we obtain that, up to a subsequence, 
\eqref{cv-fc} holds true  with $p=1$ and the limits  $(\omega , \omega^{-}  )$ of the subsequence, respectively in the domain and on the outgoing part of the boundary, 
   satisfy     \eqref{apq} with $q=1$. 
Let us now examine how to pass to the limit \eqref{equ:app:ss} and  \eqref{formuCi-nu}  as $\nu$ goes to $0$.  
\begin{itemize}
\item Thanks to \eqref{cv-data},  
\begin{align}
\label{T1}
\int_{\calF} \omega_{\nu}^{in}\varphi(0,.) dx \rightarrow \int_{\calF} \omega_{\nu}^{in}\varphi(0,.) dx 
  \text{ and }
  \int_{\bbR^+}\int_{\partial \calF^{+}} g_{\nu} \omega^+_{\nu} \varphi ds dt  \rightarrow 
   \int_{\bbR^+}\int_{\partial \calF^{+}} g \omega^+ \varphi ds dt .
\end{align}
In particular, from the last convergence we deduce that 
\begin{align}
\label{T1bis}
\calC_{i,\nu} \rightarrow \calC_{i} \, \text{ in  } \, C([0,T]) , 
\, \text{ with }  \calC_{i}(t) :=  \calC_i^{in} - \int_{0}^{t} \int_{\pS^{i}} \omega^+ g \, \text{ for } i \in \calI^+ .
\end{align}
\item Thanks to \eqref{cv-fc}, up to a subsequence, 
\begin{align}
\label{T2}
& \int_{\bbR^{+}}  \int_{\calF} \omega_{\nu} \big( \partial_t \varphi + v_g \cdot \nabla \varphi \big) \, dx dt
  \rightarrow 
   \int_{\bbR^{+}}  \int_{\calF} \omega \big( \partial_t \varphi + v_g \cdot \nabla \varphi \big) \, dx dt , \\ & \label{T2b}
     \int_{\calF} \omega_{\nu} X_i \cdot \nabla \varphi  \, dx   \rightarrow 
        \int_{\calF} \omega X_i \cdot \nabla \varphi  \, dx , \quad  \text{ in } C([0,T])   \, \text{ for } i \in \calI ,
          \end{align}
  and 
\begin{align} \label{T2c}
  \int_{\bbR^+}\int_{\partial \calF^{-}} g^-_{\nu} \omega_{\nu} \varphi ds dt  \rightarrow 
   \int_{\bbR^+}\int_{\partial \calF^{-}} g \omega^- \varphi ds dt , 
   \end{align}
   In particular, from the last convergence we deduce that 
\begin{align}
\label{T2bis}
   \calC_{i,\nu} \rightarrow \calC_{i} \, \text{ in  } \, C([0,T]) , 
\, \text{ with } \calC_{i}(t) :=  \calC_i^{in} - \int_{0}^{t} \int_{\pS^{i}} \omega^- g \ \text{ for } i \in \calI^- .
\end{align}
Therefore \eqref{formuCi} is already proved and moreover, 
 from \eqref{T1bis}, \eqref{T2b} and \eqref{T2bis}, we deduce that 
\begin{align}
\label{T4}
 \sum_{i}  \int_{\bbR^+}  \calC_{i,\nu}(t)  \int_{\calF} \omega_{\nu} X_i \cdot \nabla \varphi  \, dx \, dt 
  \rightarrow 
   \sum_{i}  \int_{\bbR^+}  \calC_{i}(t)  \int_{\calF} \omega X_i \cdot \nabla \varphi  \, dx \, dt .
 \end{align}

\item Thanks to \eqref{cv-fc}, the viscous term converges to zero: 
\begin{align}
\label{T5}
 \nu \int_{\bbR^{+}}\int_{\calF} \nabla \omega_{\nu} \cdot \nabla \varphi   \rightarrow  0  .
\end{align}
\item  We are now going to prove that for any test function  $\varphi $  in $ \mathfrak{C}_{0}(\calF) $ (recall the definition above Lemma  \ref{Hbd}),
\begin{align}
\label{T6}
\int_{\bbR^+}\int_{\calF} \int_{\calF} H_{\varphi}(x,y)\omega_{\nu} (t,x)\omega_{\nu}(t,y) \, dx \, dy \, dt  \rightarrow  
 \int_{\bbR^+}\int_{\calF} \int_{\calF} H_{\varphi}(x,y)\omega (t,x)\omega (t,y) \, dx \, dy \, dt  .
\end{align}
To prove that  we consider a $C^\infty$ function $\zeta : [0,+\infty) \rightarrow [0,+\infty)$ such that  $\zeta (x) = 1$ for $x \leq 1$ and  $\zeta (x) = 0$ for $x \geq 2$.
Let $\delta > 0$ and  $\zeta_\delta : \mathcal F \times \mathcal F \rightarrow \R$ such that $\zeta_\delta (x,y) = \zeta (\frac{\vert x-y\vert}{\delta})$. 
The family containing the measures $dt \otimes d\omega_{\nu} \otimes d\omega_{\nu}$, for $\nu$ in $(0,1)$, and  $dt \otimes d\omega \otimes d\omega$ are uniformly integrable in $L^1 ((0,T) \times \calF  \times \calF )$. Therefore for any $\eps >0$ there exists $\delta >0$ such that 
\begin{align}
\label{T6a}
\left\vert \int_{\bbR^+}\int_{\calF} \int_{\calF} \zeta(\frac{\vert x-y\vert}{\delta} ) H_{\varphi}(x,y)\omega_{\nu} (t,x)\omega_{\nu}(t,y) \, dx \, dy \, dt \right\vert
\\ \nonumber + \left\vert
 \int_{\bbR^+}\int_{\calF} \int_{\calF}  \zeta(\frac{\vert x-y\vert}{\delta} )  H_{\varphi}(x,y)\omega (t,x)\omega (t,y) \, dx \, dy \, dt \right\vert \leq \eps  ,
\end{align}
using that the Lebesgue measure of the set of the couples $(x,y)$ in $\calF \times \calF$ such that $\vert x-y\vert \leq 2 \delta$ goes to zero as $\delta$ goes to zero and Lemma  \ref{Hbd} regarding the boundedness of $H_{\varphi}$. 
Morever, for this $\delta$, since the function $(x,y) \mapsto (1- \zeta(\frac{\vert x-y\vert}{\delta} ) H_{\varphi}(x,y)$ is continuous on $(0,T) \times  \calF  \times \calF$ and  the tensor product  $\omega_{\nu} \otimes \omega_{\nu}$ converges to  $\omega \otimes \omega$ in $C_{w}([0,T]; \mathcal M(\calF \times \calF)) $, 
 there exists $\nu>0$ small enough for 
\begin{align}
\label{T6B}
\Bigg\vert \int_{\bbR^+}\int_{\calF} \int_{\calF} (1- \zeta(\frac{\vert x-y\vert}{\delta} )) H_{\varphi}(x,y)\omega_{\nu} (t,x)\omega_{\nu}(t,y) \, dx \, dy \, dt 
\\ \nonumber \quad -
 \int_{\bbR^+}\int_{\calF} \int_{\calF}  (1- \zeta(\frac{\vert x-y\vert}{\delta} ))  H_{\varphi}(x,y)\omega (t,x)\omega (t,y) \, dx \, dy \, dt \Bigg\vert \leq \eps  ,
\end{align}
Gathering \eqref{T6a} and \eqref{T6a} we obtain \eqref{T6a}. 
\end{itemize}

Combining \eqref{wf:1:equ:ss-nu}, \eqref{formuCi-nu},  \eqref{T1},  \eqref{T2},  \eqref{T2b},  \eqref{T2c},  \eqref{T4},  \eqref{T5},  and  \eqref{T6} 
we arrive at \eqref{wf:1:equ:ss}. 
This concludes the proof of Theorem  \ref{exi:L1:ss}.

\appendix

\section{Proof of Corollary \ref{cor:ref}}

In this appendix we prove Corollary \ref{cor:ref} which is a variation Lemma 3.34 of \cite{NovS}.
For the reader's convenience we recall the statement of Corollary \ref{cor:ref}: for $ p$ in $(1, +\infty) $, for any sequence $ f_{\nu} $ in $ L^p((0,T) \times \partial \calF^{-};g_{\nu} \, dtds) $, for any $f$ in $ L^p((0,T) \times \partial \calF^{-};g \, dtds) $  and for any  strictly convex function $ G $ from $\R$ to $\R$  with bounded derivative, if $ f_{\nu} g_{\nu} \cv f g $ and $ G(f_{\nu}) g_{\nu} \cv G(f)g $ in $ L^p((0,T) \times \partial \calF^{-} ; dtds) $,  then,  up to a subsequence, $ f_{\nu} $ converges to $ f $ almost everywhere. 

Let us also recall that $ g_{\nu} $ and $ g $ are positive functions on $ \bbR^{+} \times \partial \calF^- $ such that $ g_{\nu} $ converges to $ g $ in $ L^p ((0,T) \times \partial \calF^{-} ; dtds)  $.

\begin{proof}[Proof of Corollary \ref{cor:ref}.] By the hypothesis that $ g > 0 $ and by  Egoroff's theorem, we obtain that, for any $ \eps > 0 $, there is a measurable set $ E \subset (0,T) \times \partial \calF$ with Lebesgue measure $ \mu(E) < \eps$ and a constant $c_{\eps} > 0 $
such that 
$ g \geq c_{\eps} $ in $ E^c := \big((0,T)\times \partial \calF \big)\setminus E  $,
and such that  $ g_{\nu} $ converges to $ g $ uniformly in $E^c$. 

Since the function $ G $  is strictly convex, for any $x$ in $\R$ there exists a strictly increasing function $ \psi_x $ on $ \bbR^+ $ with  $ \psi_x (0) = 0$,  
such that  for any $y$ in $\R$, 
\begin{equation*}
G(y)-G(x)-G'(x)(y-x) \geq \psi_x(|y-x|).
\end{equation*}
We refer here to the proof of  Lemma 3.34 in \cite{NovS} for more on the construction of such a  function $ \psi_x $. 
We deduce that 
\begin{equation*}
\int_{E^c} G(f_{\nu})g - G(f)g -G'(f)(f_{\nu}-f)g \geq  \int_{E^c} \psi_f(|f_{\nu}-f|) g ,
\end{equation*}
Since $ g $ is positive, it is therefore sufficient to prove that the left-hand side converges to zero to conclude the proof of Corollary \ref{cor:ref}. 
We have
\begin{align*}
\int_{E^c} G(f_{\nu})g - G(f)g -G'(f)(f_{\nu}-f)g  = & \,  \int_{E^c} G(f_{\nu})g_{\nu} - G(f)g -G'(f)(f_{\nu}g_{\nu}-fg) \\ & \, + \int_{E^c} G(f_{\nu})(g-g_{\nu}) -G'(f) f_{\nu} (g-g_{\nu}). 
\end{align*} 
The first line of the right hand side converges to zero from the weak convergences of  $ f_{\nu} g_{\nu} $ to $f g $ and of $ G(f_{\nu}) g_{\nu} $ to $ G(f)g $ in $ L^p((0,T) \times \partial \calF^{-}) $. 
 For the second one we first use the lower bound of $ g $ and 
 the uniform convergence of $ g_{\nu} $ to $ g $  to infer from the weak convergences above that $ f_{\nu}  \cv  f  $ and that $ G(f_{\nu})  \cv G(f) $ in $ L^p((0,T) \times \partial \calF^{-}) $.  This entails that the term on the second line converges to $0$. 
 This concludes the proof.

\end{proof}

\smallskip 
\smallskip 
\smallskip 
\noindent
{\bf Acknowledgements.}

The authors are partially supported by the Agence Nationale de la Recherche, Project IFSMACS, grant ANR-15-CE40-0010, Project SINGFLOWS,
ANR-18-CE40-0027-01, Project BORDS, grant ANR-16-CE40-0027-01, the Conseil R\'egionale d'Aquitaine, grant 2015.1047.CP, the Del Duca Foundation, and the H2020-MSCA-ITN-2017 program, Project ConFlex, Grant ETN-765579.  
M.B. is also supported by the ERCEA under the grant 014 669689-HADE and also by the Basque Government through the BERC 2014-2017 program and by Spanish Ministry of Economy and Competitiveness MINECO: BCAM Severo Ochoa excellence accreditation SEV-2013-0323. 
This work was partly accomplished while F.S. was participating in a program hosted by the Mathematical Sciences Research Institute in Berkeley, California, during the Spring 2021 semester, and supported by the National Science Foundation under Grant No. DMS-1928930.

The authors warmly thank Maria  Kazakova, Gennady Alekseev and Alexander Mamontov for their kind help regarding the russian litterature on the subject.


\begin{thebibliography}{50}



\bibitem{Ale} Alekseev G.V. (1976). On solvability of the nonhomogeneous boundary value problem for
two-dimensional nonsteady equations of ideal fluid dynamics. Dinamika Sploshnoy
Sredy, Novosibirsk, Lavrentyev Institute of Hydrodynamics, Issue 24, pp. 15-35. [in Russian]

\bibitem{A} Ambrosio, L. (2004). Transport equation and Cauchy problem for BV vector fields. Inventiones mathematicae, 158(2), 227-260.

\bibitem{AC} Ambrosio, L., Crippa, G. (2008). Existence, uniqueness, stability and differentiability properties of the flow associated to weakly differentiable vector fields. In Transport equations and multi-D hyperbolic conservation laws (pp. 3-57). Springer, Berlin, Heidelberg.

\bibitem{BBC}
Bohun, A., Bouchut, F.,  Crippa, G. (2016). Lagrangian solutions to the 2D Euler system with L1 vorticity and infinite energy. Nonlinear Analysis, 132, 160-172.

\bibitem{BC} Bouchut, F., Crippa, G. (2013). Lagrangian flows for vector fields with gradient given by a singular integral. Journal of hyperbolic differential equations, 10(02), 235-282.

\bibitem{Boyer} Boyer, F. (2005). Trace theorems and spatial continuity properties for the solutions of the transport equation. Differential and integral equations, 18(8), 891-934.

\bibitem{Boy:Fab} Boyer, F. and Fabrie, P. (2007). Outflow boundary conditions for the incompressible non-homogeneous Navier-Stokes equations. Discrete and Continuous Dynamical Systems-Series B, 7(2), pp-219.

\bibitem{Bramble} Bramble, J. H.; and Payne, L. E. (1967). Bounds for the first derivatives of Green's function. (Italian summary) Atti Accad. Naz. Lincei Rend. Cl. Sci. Fis. Mat. Natur. (8) 42, 604-610. 


\bibitem{CA} Chemetov, N. A., Antontsev, S. N. (2008). Euler equations with non-homogeneous Navier slip boundary conditions. Physica D: Nonlinear Phenomena, 237(1), 92-105.

\bibitem{CC} Chemetov, N. V.,  Cipriano, F. (2013). The inviscid limit for the Navier-Stokes equations with slip condition on permeable walls. Journal of Nonlinear Science, 23(5), 731-750.

\bibitem{Che:Sta} Chemetov, N. V., Starovoitov, V. N. (2002). On a Motion of a Perfect Fluid in a Domain with Sources and Sinks. Journal of Mathematical Fluid Mechanics, 4(2), 128-144.

\bibitem{Coron}
Coron, J. M. (2007). Control and nonlinearity (No. 136). American Mathematical Soc..


\bibitem{CNSS}
Crippa, G., Nobili, C., Seis, C.,  Spirito, S. (2017). Eulerian and Lagrangian Solutions to the Continuity and Euler Equations with $L^1$ Vorticity. SIAM Journal on Mathematical Analysis, 49(5), 3973-3998.

\bibitem{CS}
Crippa, G.,  Spirito, S. (2015). Renormalized Solutions of the 2D Euler Equations. Communications in Mathematical Physics, 339(1), 191-198.

\bibitem{Del} Delort, J. M. (1991). Existence de nappes de tourbillon en dimension deux. Journal of the American Mathematical Society, 4(3), 553-586.


\bibitem{SemEDPEcolePolytechniques}
 Delort, J.-M. (1991). 
 Existence de nappes de tourbillon pour l'\'{e}quation d'Euler sur le plan.
 {\em S\'{e}m. \'{E}q. D\'{e}riv. Part. 1990-1991}, Ec. Polytechnique, expos\'{e} no. 2.

\bibitem{DL}
DiPerna, R. J.,  Lions, P. L. (1989). Ordinary differential equations, transport theory and Sobolev spaces. Inventiones mathematicae, 98(3), 511-547.


\bibitem{Eidus}  Eidus D. M. (1956). Dokl. Akad. Nauk SSSR (N.S.) 106, 207-209.


\bibitem{EvansMuller}  Evans L. C. and M\"{u}ller S.  (1994).
 Hardy spaces and the two-dimensional Euler equations with nonnegative vorticity,
 {\em J. Amer. Math. Soc.}, 7:199:219.


\bibitem{flucher-gustafsson}
Flucher M. and Gustafsson, B. ({1997}).
\emph{{Vortex motion in two--dimensional
  hydrodynamics}}, TRITA-MAT-1997-MA-02.

\bibitem{GAGLI} Gagliardo, E. (1957). Caratterizzazioni delle tracce sulla frontiera relative ad alcune classi di funzioni in $ n $ variabili. Rendiconti del seminario matematico della universita di Padova, 27, 284-305.


\bibitem{gilbarg-trudinger}
Gilbarg D.~ and Trudinger, N.~S.  (1997). \emph{Elliptic partial differential equations
  of second order}, Springer. 


\bibitem{G}
Glass, O., (2012).
 Some questions of control in fluid mechanics. {\it In Control of Partial Differential Equations} (pp. 131-206). Springer, Berlin, Heidelberg. 


\bibitem{GKS}
 Glass, O., Kolumb\'an, J. J.,  Sueur, F. (2020). External boundary control of the motion of a rigid body immersed in a perfect two-dimensional fluid. Analysis \& PDE, 13(3), 651-684.


\bibitem{GR}
Glass, O.,  Rosier, L. (2013). On the control of the motion of a boat. Mathematical Models and Methods in Applied Sciences, 23(04), 617-670.


\bibitem{Kato}
Kato, T. (1967). On classical solutions of the two-dimensional non-stationary Euler equation. Archive for Rational Mechanics and Analysis, 25(3), 188-200.

\bibitem{KL}
Kreiss, H. O.,  Lorenz, J. (2004). 
Initial-boundary value problems and the Navier-Stokes equations. Society for Industrial and Applied Mathematics. 

\bibitem{KMPT} Kato, T., Mitrea, M., Ponce, G., Taylor, M. (2000). Extension and representation of divergence-free vector fields on bounded domains. Mathematical Research Letters, 7(5), 643-650.


\bibitem{LSU}
Ladyzhenskaya, O., Solonnikov, V.,  Uraltseva, N.   (1968).
Linear and quasilinear parabolic equations of second order. Translation of Mathematical Monographs, AMS, Rhode Island.



\bibitem{Laurencot}
Lauren\c{c}ot, P. (2015). Weak compactness techniques and coagulation equations. In Evolutionary equations with applications in natural sciences (pp. 199-253). Springer, Cham.

\bibitem{lions}
Lions, P.-L.~(1996). 
Mathematical topics in fluid mechanics. Vol. 1. Incompressible models. Oxford Lecture Series in Mathematics and its Applications 3. 

\bibitem{Lin}
Lin, C. C. (1941). On the motion of vortices in two dimensions-ii some further investigations on the Kirchhoff-Routh function. Proceedings of the National Academy of Sciences of the United States of America, 575-577.



\bibitem{Mam} Mamontov, A. E. (2009). On the uniqueness of solutions to boundary value problems for non-stationary Euler equations. In New Directions in Mathematical Fluid Mechanics (pp. 281-299). Birkhäuser Basel.

\bibitem{MU} Mamontov, A. E., Uvarovskaya, M. I. (2011). On the Global Solvability of the Two-Dimensional Through-Flow Problem for the Euler Equations with Unbounded Vorticity at the Entrance. Siberian Journal of Pure and Applied Mathematics, 11(4), 69-77.


\bibitem{MP} Marchioro C.  and  Pulvirenti  M.  (1994). 
Mathematical Theory of Incompressible Nonviscous Fluids, Springer-Verlag, New York-Heidelberg. 


\bibitem{NovS} Novotny, A., Straskraba, I. (2004). Introduction to the mathematical theory of compressible flow (No. 27). Oxford University Press on Demand.


\bibitem{NSW} Nussenzveig Lopes, H. J., Seis, C.,  Wiedemann, E. (2020). On the vanishing viscosity limit for 2D incompressible flows with unbounded vorticity. arXiv e-prints, arXiv-2007.

\bibitem{S} Schochet, S. (1995). The weak vorticity formulation of the 2-D Euler equations and concentration-cancellation. Communications in partial differential equations, 20(5-6), 1077-1104.

\bibitem{BodyDelort}
Sueur, F. (2013). On the motion of a rigid body in a two-dimensional ideal flow with vortex sheet initial data. In Annales de l'IHP Analyse non linéaire (Vol. 30, No. 3, pp. 401-417).

\bibitem{T}
Turkington, B. (1987). On the evolution of a concentrated vortex in an ideal fluid. Archive for Rational Mechanics and Analysis, 97(1), 75-87.

\bibitem{VW}
Vecchi  I. and S. Wu. (1993)
 {On $L^1$-vorticity for 2-D incompressible flow.}
 {\em Manuscripta Math.}, 78:403--412.

\bibitem{Vrabie}
Vrabie, I. I. (1995). Compactness methods for nonlinear evolutions (Vol. 75). CRC Press.


\bibitem{WZ}
Wu, G.,  Zhang, P. (1999). The zero diffusion limit of 2-D Navier-Stokes equations with $ L^ 1$ initial vorticity. Discrete and Continuous Dynamical Systems-A, 5(3), 631.

\bibitem{Yudo} Yudovich, V. I. (1964). A two-dimensional non-stationary problem on the flow of an ideal incompressible fluid through a given region. Matematicheskii sbornik, 106(4), 562-588.


\end{thebibliography}
\end{document}